\theoremstyle{plain} 
\newtheorem{thm}{Theorem}
\newtheorem{prop}{Proposition}
\newtheorem{lem}{Lemma}
\theoremstyle{remark}
\def\1{1\!{\rm l}}
\newcommand{\leqa}{\lesssim}
\newcommand{\geqa}{\gtrsim}
\newcommand{\mf}{m}
\newcommand{\gf}{g}
\newcommand{\bef}{B}
\newcommand{\tmf}{\tilde{m}}
\newcommand{\EM}{\ensuremath}
\newcommand{\al}{\alpha}
\newcommand{\be}{\beta}
\newcommand{\ga}{\gamma}
\newcommand{\La}{\Lambda}
\newcommand{\te}{\theta}
\newcommand{\ta}{\tau}
\newcommand{\veps}{\varepsilon}
\newcommand{\cA}{\EM{\mathcal{A}}}
\newcommand{\cC}{\EM{\mathcal{C}}} 
\newcommand{\cD}{\EM{\mathcal{D}}}
\newcommand{\cL}{\EM{\mathcal{L}}}
\newcommand{\cN}{\EM{\mathcal{N}}}
\newcommand{\cT}{\EM{\mathcal{T}}}
\definecolor{blendedblue}{rgb}{0.2,0.2,0.7}
\newcommand{\sbl}[1]{{\color{blendedblue}{#1}}}
\DeclareMathAlphabet{\mathpzc}{OT1}{pzc}{m}{it}
\newcommand{\RR}{\mathbb{R}}
\newcommand{\given}{\,|\,}
\newtheorem{remark}{Remark}
\newcommand{\mockalph}[1]{}
\newcommand{\eps}{\varepsilon}
\newcommand{\hal}{\hat\al}
\newcommand{\hatha}{\hat\theta_\al}
\begin{document}

\begin{frontmatter}
\title{Spike and slab empirical Bayes sparse credible sets} 
\runtitle{Spike and Slab empirical Bayes credible sets}

\begin{aug}
\author{\fnms{Isma\"el} \snm{Castillo}\thanksref{t3}
\ead[label=e1]{ismael.castillo@upmc.fr}}
\and
\author{\fnms{Botond} \snm{Szab\'o}\thanksref{t1,t2}
\ead[label=e2]{b.t.szabo@math.leidenuniv.nl}}

\address{Sorbonne Universit\'e\\
Laboratoire Probabilit\'es, Statistique et Mod\'elisation\\
4, place Jussieu, 75005 Paris, France\\
\printead{e1}}

\address{Leiden University\\
Mathematical Institute, 
Niels Bohrweg 1\\
2333 CA Leiden, The Netherlands\\
\printead{e2}}

\thankstext{t3}{Work partly supported by the grant ANR-17-CE40-0001-01
of the French National Research Agency ANR (project BASICS).}
\thankstext{t1}{Research supported by the Netherlands Organization for Scientific Research.}
\thankstext{t2}{The research leading to these results has received funding from the European
  Research Council under ERC Grant Agreement 320637.}

\runauthor{I. Castillo and B. Szab\'o}

\affiliation{}

\end{aug}

\begin{abstract}
In the sparse normal means model, coverage of adaptive Bayesian posterior credible sets associated to spike and slab prior distributions is considered. The key sparsity hyperparameter is calibrated via marginal maximum likelihood empirical Bayes. First, adaptive posterior contraction rates are derived with respect to $d_q$--type--distances for $q\le 2$. Next, under a type of so-called excessive-bias conditions, credible sets are constructed that have coverage of the true parameter at prescribed $1-\al$ confidence level and at the same time are of optimal diameter. We also prove that the previous conditions cannot be significantly weakened from the minimax perspective.
\end{abstract}

\begin{keyword}[class=MSC]
\kwd[Primary ]{62G20}
\end{keyword}

\begin{keyword}
\kwd{Convergence rates of posterior distributions, credible sets, spike and slab prior distributions, Empirical Bayes}
\end{keyword}

\end{frontmatter}

\section{Introduction}

\subsection{Setting}

In the sparse normal means model, one observes a sequence $X=(X_1,\ldots,X_n)$
\begin{equation} \label{model}
 X_i = \theta_i + \veps_i,\quad i=1,\ldots,n, 
\end{equation} 
with $\theta=(\theta_1,\ldots,\theta_n)\in\RR^n$ and $\veps_1,\ldots,\veps_n$ 
i.i.d. $\cN(0,1)$. Given $\te$, the distribution of $X$ is a product of Gaussians and is  denoted by $P_\te$. Further, one assumes that the `true' vector $\theta_0$ belongs to 
\begin{equation*}  
 \ell_0[s] = \left\{\te\in\RR^n,\ \ |\{i:\ \te_i\neq0\}|\le s \right\}, 
\end{equation*}
the set of vectors that have at most $s$ nonzero coordinates, where $s$ is a sequence such that $s/n=o(1)$ and $s\to\infty$ as $n\to\infty$. A natural problem is that of reconstructing  $\te$ with respect to the $\ell^q$--type--metric for $0<q\le 2$ (it is a true metric only for $q\le 1$) defined by
\[ d_q(\te,\te')=\sum_{i=1}^n |\te_i-\te_i'|^q.\] 
A benchmark is given by the minimax rate for this loss over the class of sparse vectors $\ell_0[s]$.  The minimax rate over $\ell_0[s]$ for the loss $d_q$ is of the order, as $n\to\infty$, see \cite{djhs92},
\[ r_q:=r_{n,q}=s[\log(n/s)]^{q/2}. \]

The sparse sequence model has become very central in statistics as one of the simplest and natural models to describe sparsity, in a similar way as the Gaussian white noise model in the setting of nonparametrics. Many authors have contributed to its study both from Bayesian and non-Bayesian perspectives, in particular in terms of convergence rates. Some seminal contributions include \cite{BirgeMas01}, \cite{Golubev02}, \cite{fdr06}. Methods using an empirical Bayes approach to study aspects of the posterior distribution  include works by George and Foster \cite{georgefoster}, Johnstone and Silverman \cite{js04} (whose approach we describe in more detail in Sections \ref{subsec-eb}-\ref{subsec-motiv}) and Jiang and Zhang \cite{jiangzhang09}. Works studying the full posterior have started more recently, and we include a brief overview below.      
Here our interest is in a popular class of Bayesian procedures associated to spike and slab prior distributions.  We undertake a so-called frequentist analysis of the posterior distribution. That is, we first construct a prior distribution on the unknown sparse $\te$ and then use the Bayesian framework  to produce a posterior distribution, which is then studied under the frequentist assumption that the data has actually been generated from a `true' unknown sparse parameter $\te_0$. 
 
Our interest is in precise understanding of how posterior distributions for spike and slab priors work for {\em inference} in terms of convergence and confidence sets. Such priors play a central role in statistics, in sparse and non-sparse settings (such as nonparametric function estimation, see e.g. \cite{JohnSilvWave05}), and also as tools for lower bounds. In sparsity contexts, especially for $\ell_0[s]$ classes, they are one of the most natural choice of priors.  
Despite recent advances, there are many open questions regarding mathematical properties of such fundamental priors for inference. A brief overview of the literature on sparse priors is given below. We note also that the present work is a natural continuation of \cite{cm17}, where rates of convergence in the case $q=2$ were investigated. Here we handle the fundamentally different issue of building confidence regions, as well as posterior convergence rates, with respect to $\ell_q$--type--metrics for all $q$ in $(0,2]$. 

The construction of confidence sets is of key importance in statistics, but is a delicate issue. For convenience let us formally denote by $\{\Theta_\beta:\,\beta\in B\}$ a collection of models indexed by some parameter $\beta\in B$ (e.g. sparsity, regularity, dimension, etc.). In practice it is typically unknown which model $\Theta_\beta$ the true $\theta$ belongs to, hence one wants to develop adaptive methods not relying on the knowledge of $\beta$. 
 Constructing adaptive confidence sets in high-dimensional and nonparametric problems is very challenging, in fact impossible in general, see for instance \cite{Low:97,RobinsvdV, gine:nickl:2016} in context of nonparametric models and \cite{nickl:geer:2013} in (sparse) high dimensional problems. Therefore it is sometimes necessary to introduce further assumptions on the models $\Theta_\beta$, $\beta\in B$ to derive positive results, see  \cite{nickl:geer:2013} for more detailed description of the problem in the high-dimensional setting as well as Sections \ref{sec-credible}--\ref{sec-credible-necessity} below.

In various fields of applications, for their flexibility and practical convenience, Bayesian credible sets are routinely used as a measure of uncertainty. However, it is not immediately clear what the frequentist interpretation of these sets is, i.e. whether such sets can be used as confidence sets or whether by doing so one provides a misleading haphazard uncertainty statement. The asymptotic properties of Bayesian credible sets have been investigated only in recent years, see for instance \cite{szabo:vdv:vzanten:13,rousseau:sz:2016,castillo:nickl:2013} and references therein. In the context of sparse high dimensional problems there are only a few results available. For the sparse normal means model, the frequentist coverage properties of a sparsity prior with empirically chosen Gaussian slabs  \cite{belnur15} and of the horseshoe prior  \cite{vsvuq17}  were investigated, while in the more general linear regression model credible sets for the modified Gaussian slab prior are studied in \cite{belitser:ghosal:16}, all for the quadratic risk. In the present paper the focus is on the standard and popular spike and slab prior, which requires 	a substantially different analysis compared to the previous examples, as explained in more details below.

\subsection{Spike and slab prior and associated posterior distribution} 
\label{subsec-post}

The spike and slab prior with sparsity parameter $\alpha$ is the prior $\Pi_\al$ on $\te$ given by 
\begin{equation} \label{prior}
\te \sim \otimes_{i=1}^n \big((1-\al)\delta_0 + \al G(\cdot)\big)=: \Pi_\alpha, 
\end{equation} 
where $\delta_0$ denotes the Dirac mass at $0$ and $G$ is a given
probability measure of density $\ga$. It is often assumed that $\ga$ is a symmetric unimodal density on $\RR$. We will make specific choices in the sequel. 
The posterior distribution under \eqref{model}-\eqref{prior} is 
\begin{equation} \label{postsp}
\Pi_\alpha[\cdot\given X] 
\sim \otimes_{i=1}^n \big((1-a(X_i))\delta_0 + a(X_i) \ga_{X_i}(\cdot)\big),
\end{equation}
where we have set, denoting $\phi$ the standard normal density and 
$\gf(x)=\phi*\ga(x)=\int \phi(x-u)dG(u)$ the convolution of $\phi$ and $G$,  
\begin{align*}
\gf(X_i) & = (\phi*\ga) (X_i),\\
\ga_{X_i}(\cdot) & = \frac{\phi(X_i-\cdot) \ga(\cdot)}{g(X_i)},\\
a(X_i) & = a_\al(X_i)=\frac{\al \gf(X_i)}{(1-\alpha)\phi(X_i) + \alpha \gf(X_i)}.
\end{align*}
If the choice of $\al$ is clear from the context, we denote $a(x)$ instead of $a(\al,x)$ for simplicity. 

{\em Introducing the posterior median threshold.}  For any symmetric $\ga$ density,  the vector $\hat\te_\al$ of medians  of the  coordinates of the posterior \eqref{postsp} (whose $i$th coordinate by  \eqref{postsp} only depends on $X_i$)  has been studied in \cite{js04}. The following property is used repeatedly in what follows, see Lemma 2 in \cite{js04}: the posterior coordinate-wise median  has a thresholding property:  there exists $t(\al)>0$ such that $\hat\te_\al(X)_i=0$ if and only if $|X_i|\le t(\al)$.

\subsection{Empirical Bayes estimation of $\al$ via marginal likelihood}   
\label{subsec-eb} 

In a seminal paper, Johnstone and Silverman \cite{js04} considered estimation of $\te$ using spike and slab priors combined with a very simple empirical Bayes method for choosing $\al$ that we also follow here and describe next. 
The marginal likelihood in $\alpha$ is the density of $X\given \al$ at the observation points in the Bayesian model. A simple calculation reveals that its logarithm equals 
\[ \ell(\al) = \ell_n(\al;X)= \sum_{i=1}^n \log( (1-\al)\phi(X_i) + \al \gf(X_i)). \]
The corresponding score function equals $S(\al):=\ell'(\al)=\sum_{i=1}^n \bef(X_i,\al)$, where 
\begin{equation} \label{eqbeta}
 \bef(x) = \frac{\gf}{\phi}(x)-1;\qquad \bef(x,\al)=\frac{\bef(x)}{1+\al\bef(x)}.
\end{equation} 
Then \cite{js04} define $\hal$ as the maximiser, henceforth abbreviated as MMLE,  of the log-likelihood 
\begin{equation} \label{defhal}
\hal = \underset{\al\in \cA_n}{\text{argmax}}\ \ell_n(\al;X),
\end{equation}
where $\cA_n=[\al_n,1]$, and $\al_n$ is defined by, with $t(\al)$ the posterior median  threshold as above,
\[ t(\al_n)=\sqrt{2\log{n}}. \]

\subsection{Motivating risk results}
\label{subsec-motiv}

 Let as above $\hat\te_\al$ denote the posterior coordinate-wise median associated to the posterior \eqref{postsp} with fixed hyper-parameter $\al$ and let $\hat\te=\hat\te_{\hat\al}$, with $\hat\al$ as in \eqref{defhal}. 
Suppose that   $s=o(n)$ as $n\to\infty$ and that for some  constant $\kappa_1>0$,
\begin{equation} \label{techsn} 
 \kappa_1 \log^2 n \le s.
 \end{equation} 

\noindent {\em Fact 1 (direct consequence of \cite{js04}, Theorem 1)}. 
Let $\ga$ be the Laplace or the Cauchy density. Suppose \eqref{techsn} holds. For any $0<q\le 2$, there exists a  constant $C=C(q,\ga)>0$ such that 
\[ \sup_{\te_0\in\ell_0[s]} E_{\te_0}d_q(\hat\te,\te_0) \le Cr_q, \]
thereby proving minimaxity (up to a constant multiplier) of the estimator $\hat\te=\hat\te_{\hat\al}$ over $\ell_0[s]$. The estimator is adaptive, as the knowledge of $s$ is not required in its construction. Condition \eqref{techsn} is quite mild. In case it is not satisfied, the upper bound on the rate above is $Cr_q+\log^3n$ instead of $Cr_q$, which means there may be a slight logarithmic penalty to use $\hat\te$ in the extremely sparse situation where $s\ll \log^2n$. Theorem 2 in \cite{js04} shows that the estimate $\hat\al$ can in fact be modified so that the minimax risk result holds even if the lower bound in \eqref{techsn} is not satisfied. For simplicity in the present paper we work under \eqref{techsn} but presumably modifying the estimator as in \cite{js04} leads to minimax optimality also in the extremely sparse range in the context of Theorem \ref{thm-risk-dq}.  In this respect, we note that \cite{cm17}, Theorem 5, shows that this is indeed the case when $q=2$ and a Cauchy prior is used.

Consider the plug-in empirical Bayes posterior, for $\hat \al$ the MMLE as defined above,
\[ \Pi_{\hat\al}[\cdot\given X] \sim \otimes_{i=1}^n \big((1-a_{\hat\al}(X_i))
\delta_0 + a_{\hat\al}(X_i) \ga_{X_i}(\cdot)\big).\]

\noindent {\em Fact 2 (\cite{cm17}, Theorems 1 and 3)}
Let $\hat \al$ be the MMLE given by \eqref{defhal}. Let $\ga$ be the Cauchy density.   Under \eqref{techsn} there exists $C>0$ such that, for $n$ large enough, 
\[ \sup_{\te_0\in \ell_0[s]} E_{\te_0} \int d_2(\te,\te_0) d\Pi_{\hat\al}(\te\given X) 
\le Cs\log(n/s).\]
For $\ga$ the Laplace density, the result does not hold: there exists $\te_0\in \ell_0[s]$  and $c>0$ such that 
\[  E_{\te_0} \int d_2(\te,\te_0) d\Pi_{\hat\al}(\te\given X) 
\ge c M_n s\log(n/s), \]
where $M_n=\exp\{\sqrt{\log(n/s)}\}$ goes to infinity with $n/s$. This shows that if tails of the slab prior are not heavy enough, the corresponding posterior does not reach the optimal minimax rate over sparse classes. In particular, typical credible sets such as  balls arising from this posterior will not have optimal diameter. 
These observations naturally lead to wonder if confidence sets in the  squared euclidean norm $d_2=\|\cdot\|^2$ could be obtained using a Cauchy slab, for which the optimal posterior contraction rate is guaranteed, and how the previous facts evolve if $d_q$--type--metrics for $q<2$ are considered.

\subsection{Brief overview of results on sparse priors}
 
Many popular sparse priors can be classified into two categories: first, priors that put some coefficients to the exact zero value, such as spike and slab priors and second, priors that instead draw coefficients using absolutely continuous distributions, and thus do not generate exact zero values. In the first category, one can generalise the spike and slab prior scheme \eqref{prior} by first selecting a random subset $S$ of indexes within $\{1,\ldots,n\}$ and then given $S$ setting $\te_i=0$ for $i\notin S$ and drawing $\te_i$ for $i\in S$ from some absolutely continuous prior distribution. This scheme has been considered e.g. in \cite{cv12}, where the case of an induced prior $\pi_n$ on the number of non-zero coefficients of the form $\pi_n(k)\propto \exp[-c_1 k \log(c_2n/k)]$, called complexity prior, is studied and the slab distribution has tails at least as heavy as Laplace. Belitser and coauthors \cite{belnur15}, \cite{belitser:ghosal:16}, and Martin and Walker \cite{martin:walker:14}, consider the case of Gaussian slabs that are recentered at the observation points. Other proposals for slab distributions include non-local priors as in \cite{johnson:rossell:12}. 
   
In the second category, one can replace the Dirac mass at zero of the spike by a density approaching it, as in the spike and slab LASSO introduced by Ro{\v c}kov\'a and George, see \cite{rockovageorge17}, \cite{rockova17}. One can also directly define a certain continuous density with a lot of mass at zero and heavy tails, as does the horseshoe prior introduced in \cite{carvalhoetal} and further studied in \cite{vkv14}, \cite{vsv17}, \cite{vsvuq17}, see also \cite{salometal} for other families of mixture priors. Other approaches to continuous shrinkage priors include the Dirichlet-Laplace priors of \cite{bpd15}. 
    
Most previously cited works are concerned with posterior convergence rates, with the exception of \cite{belnur15}--\cite{belitser:ghosal:16} (that considers also oracle results) and \cite{vsvuq17}, that derive properties of credible sets, all with respect to the squared $\ell^2$--loss. The prior and confidence sets introduced in \cite{belnur15} are quite different from those considered here, in that, for instance, the radius of credible set we  consider is determined directly from the posterior distribution, and the priors and confidence sets in \cite{belnur15} require some post-processing (e.g. specific separate estimation of the radius and recentering of the  posterior selected components). As noted above, the horseshoe prior belongs to a different category of priors, not setting any coefficient to $0$, and further, it is not clear if its Cauchy tails would be sufficiently heavy to handle $d_q$--losses for small $q$, at least via a MMLE--empirical Bayes choice of its tuning parameter $\ta$. An overview of current research can be found in the discussion paper \cite{vsvuq17}. 

\subsection{Outline and notation}

\noindent {\em Outline and summary of main results.} Section \ref{sec:main} contains our main results. First, adaptive convergence rates in $d_q$--type--distances, $0<q\le 2$, are derived for the full empirical Bayes posterior $\Pi_{\hat\al}[\cdot\given X]$, for a well--chosen slab distribution. Second, frequentist coverage results are obtained for credible balls centered at the posterior median estimator $\hat\te$ and whose radius is a constant $M$ times the posterior expected radius $\int d_q(\te,\hat\te)d\Pi(\te\given X)$, both for deterministic and data-driven choice of the hyper-parameter $\al$. In the later case, we prove that under an excessive--bias condition, the credible sets have optimal diameter and frequentist coverage, already for fixed large enough $M$ (so without the need of a `blow-up' $M=M_n\to\infty$). Focusing on the case $q=2$, we then discuss the obtained excessive--bias condition  and show that such a condition cannot be weakened from the minimax perspective. 
Section \ref{sec:discussion} briefly  discusses the main findings of the paper. Proofs are organised as follows: Section \ref{sec:proofs-generalities} regroups some useful preliminary bounds,  Section \ref{sec:proofs:cs} is devoted to proofs for credible sets. A separate supplementary material \cite{castillo:szabo:2018:supplement} gathers proofs of technical lemmas, as well as the proof of the rate Theorem \ref{thm-risk-dq}.  \\

\noindent {\em Notation.} 
For two sequences $a_n,b_n$ let us write $a_n\lesssim b_n$ if there exists a universal constant $C>0$ such that $a_n\leq C b_n$, and  $a_n\asymp b_n$ if $a_n\lesssim b_n$ and $b_n\lesssim a_n$ hold simultaneously. We write $a_n\sim b_n$ for $b_n\neq 0$ if $a_n/b_n=1+o(1)$. We denote throughout by $c$ and $C$ universal constants whose value may change from line to line.
Also, the $d_q$--diameter of a set $\cC$ is written $diam_q(\cC)$, i.e.
\begin{align*}
diam_q(\cC)=\sup_{\theta,\theta'\in \cC}d_q(\theta,\theta').
\end{align*}
For convenience in the  case $q=2$, we denote by $diam(\cC)$ the $d_2$--diameter of the set $\cC$. Finally, when $\cD$ is a finite set, $|\cD|$ denotes the cardinality of $\cD$.

\section{Main results} \label{sec:main}

\subsection{Slab prior distributions}

For a fixed $\delta\in(0,2)$, consider the unimodal symmetric density $\gamma=\gamma_\delta$ on $\RR$ given by 
\begin{equation} \label{dens-hv}
\ga(x)=\frac{c_\ga}{(1+|x|)^{1+\delta}} =  c_\ga\Delta(1+|x|), \quad \text{for }\Delta(u)=u^{-1-\delta}, 
\end{equation} 
where $c_\ga=c_\ga(\delta)$ is the normalising constant making $\ga$ a density. The purpose of this density is to have sufficiently heavy tails, possibly heavier than Cauchy. To fix ideas, we take the specific form \eqref{dens-hv}, but as is apparent from the proofs, similar results continue to hold for densities with same tails: for instance, the Cauchy density could be used instead of $\ga_2$. The possibility of having a broad range of heavy tails is essential to achieve optimal rates in terms of $d_q$ for the considered empirical Bayes procedure. If $\delta\ge 1$, the function $u\to (1+u^2)\ga(u)$ is bounded and the density $\ga$ falls in the framework of \cite{js04}. If $\delta\in(0,1)$, we show below how this changes estimates quantitatively. In all cases, $\ga$ still satisfies
\[ \sup_{u>0} \left| \frac{d}{du} \log \ga(u) \right|=: \La<\infty. \]
Recall $\gf=\phi*\ga$ is the convolution of the heavy-tailed $\gamma$ given by \eqref{dens-hv} and the noise density $\phi$. 
Basic properties of $\gf$ are gathered in Lemma \ref{lem-gf}, while Lemma \ref{lem-momgf} provides bounds on corresponding moments of the score function.

\subsection{Adaptive risk bounds for integrated posterior}

\begin{thm} \label{thm-risk-dq}
Fix $\delta\in(0,2)$. Let $\ga=\ga_\delta$ be the density defined by \eqref{dens-hv} and let $\hat \al$ be the corresponding MMLE given by \eqref{defhal} and  suppose \eqref{techsn} 
 holds. Then there exists a universal constant $C>0$, that in particular is independent of $\delta, q$,  such that, for large enough $n$, for any $q\in (2\delta,2]$,  
\[ \sup_{\te_0\in \ell_0[s]} E_{\te_0} \int d_q(\te,\te_0) d\Pi_{\hat\al}(\te\given X) 
\le Cs\log^{q/2}(n/s).\] 
\end{thm}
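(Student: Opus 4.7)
The plan is to follow the coordinatewise strategy of \cite{cm17}, now adapted to general $q\in(2\delta,2]$ with a tunable tail parameter $\delta$. First, I would localise $\hat\alpha$: on an event of $P_{\theta_0}$-probability tending to $1$, $\hat\alpha\in[\alpha_n,\alpha^*]$ for a deterministic $\alpha^*\asymp (s/n)\log^K(n/s)$ with $K=K(\delta)$. This is obtained by studying the sign of the score $S(\al)=\sum_i\bef(X_i,\al)$ at $\al=\alpha_n$ and $\al=\alpha^*$, where the mean and variance of $\bef(X_i,\al)$ are controlled by Lemma \ref{lem-momgf}.

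By the product structure of the posterior \eqref{postsp},
\[
\int d_q(\te,\te_0)\,d\Pi_\al(\te\,|\,X) \;=\; \sum_{i:\te_{0,i}=0} U_i(\al) \,+\, \sum_{i:\te_{0,i}\neq 0} V_i(\al),
\]
where each summand equals $(1-a_\al(X_i))|\te_{0,i}|^q + a_\al(X_i)\int|\te-\te_{0,i}|^q\ga_{X_i}(\te)\,d\te$ and depends on the data only through $X_i$. For signal coordinates, I would use that $a_\al(X_i)\approx 1$ with high probability and that $\ga_{X_i}$ concentrates within an $O(1)$-ball of $X_i$, giving $\bE_{\te_0}V_i(\al)\lesssim \log^{q/2}(n/s)$ uniformly over $\al\in[\alpha_n,\alpha^*]$, so that the signal block contributes at most $s\log^{q/2}(n/s)$. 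The small-$a_\al(X_i)$ residue contributes $|\te_{0,i}|^q\bP_{\te_{0,i}}(|X_i|\le t(\al))$, again of the correct order using $t(\al)\lesssim\sqrt{2\log(n/s)}$.

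The core is the noise bound $\bE_0 U_i(\al)$. Using the identity
\[
\phi(x)\,a_\al(x)\,\ga_x(\te) \;=\; \frac{\al\,\phi(x-\te)\,\ga(\te)}{(1-\al)\phi(x)+\al \gf(x)},
\]
I would split the $X_i$-integral into the regions $|X_i|>t(\al)$ and $|X_i|\le t(\al)$. On the first, $a_\al\approx 1$ but $\bP_0(|X_i|>t(\al))\asymp\al$ and $\bE_0[\ind_{|X_i|>t(\al)}|X_i|^q]\asymp \al\log^{(q-1)/2}(1/\al)$. On the second, the denominator is bounded below by $(1-\al)\phi(x)$ and the $\te$-integral is handled using the heavy-tail bound $\ga(\te)\lesssim|\te|^{-1-\delta}$ together with Gaussian concentration, yielding a contribution of order $\al\log^{(q-2\delta)/2}(1/\al)\vee \al$. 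Summing over the $\le n$ noise coordinates and substituting $\al\le \alpha^*\asymp (s/n)\log^K(n/s)$ produces the target rate $s\log^{q/2}(n/s)$ precisely when $q>2\delta$; without this constraint, the polylogarithmic exponent exceeds $q/2$.

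The main obstacle is this last step. Naive bounds fail because $\int|\te|^q\ga(\te)\,d\te=\infty$ for $q\ge\delta$, so one must retain the full posterior denominator and perform the $x$- and $\te$-integrations jointly, extracting the $\log^{(q-2\delta)/2}$ factor from the Gaussian tail rather than from slab moments. Uniformity in $\al$ then follows from monotonicity of the coordinatewise bounds in $\al$ (via $a_\al$ and $t(\al)$), and a trivial estimate on the low-probability complement of Step 1 handles the bad event for $\hat\al$, yielding the claimed rate.
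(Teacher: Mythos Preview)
Your localisation of $\hat\alpha$ is only half of what is needed, and the resulting signal bound is off by the factor $\log^{q/2}(n)/\log^{q/2}(n/s)$. The interval $[\alpha_n,\alpha^*]$ you describe has the \emph{correct upper endpoint} $\alpha^*\asymp (s/n)\log^{\delta/2}(n/s)$ (this is the paper's $\al_1$, Lemma~\ref{lemzetaunder}), but the lower endpoint $\alpha_n$ carries no information: $t(\alpha_n)=\sqrt{2\log n}$, not $\sqrt{2\log(n/s)}$. Since $t(\cdot)$ is decreasing, your claim ``$t(\alpha)\lesssim\sqrt{2\log(n/s)}$ for $\alpha\in[\alpha_n,\alpha^*]$'' is false at the left end. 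Consequently the per-coordinate signal bound $E_\mu r_q(\alpha,\mu,x)\lesssim 1+\ta(\alpha)^q$ only yields $s\log^{q/2}(n)$ after summing, which is not the minimax rate when $s$ is a power of $n$. The monotonicity you invoke cannot rescue this: noise risk increases in $\alpha$ while signal risk increases in $t(\alpha)$, so the two pull in opposite directions and a single deterministic interval cannot control both at the sharp level.

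The paper repairs this with two ingredients you are missing. First, a \emph{three-way} split of coordinates: zero, small nonzero $|\te_{0,i}|\le\zeta_1$, and large nonzero $|\te_{0,i}|>\zeta_1$. Small nonzero coordinates are handled like noise via the crude bound $|\te_{0,i}|^q+C\le \zeta_1^q+C$ (Lemma~\ref{lemns}), contributing $s\zeta_1^q\asymp s\log^{q/2}(n/s)$. Second, for the $n\pi_1$ large-signal coordinates, one needs a $\te_0$-\emph{dependent} oversmoothing bound $P[\hat\alpha<\alpha_2]\le\exp\{-Cn\zeta_2\phi(\zeta_2)\}$ (Lemma~\ref{lemov}), where $\al_2$ is defined through the proportion $\pi_1$ of large signals. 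The crucial estimate is then \eqref{zd2}: $\pi_1\zeta_2^q\lesssim (s/n)\log^{q/2}(n/s)$, which bounds the \emph{product} (number of large signals)$\times$(threshold)$^q$ directly, rather than each factor separately. This is what allows the large-signal block $n\pi_1(1+\zeta_2^q)$ to hit the target rate even when $\pi_1$ is tiny and $\zeta_2$ is correspondingly large. A minor point: your noise exponent should be $(q-\delta)/2$, not $(q-2\delta)/2$; the condition $q>2\delta$ enters through the constant $\delta/(q-\delta)\le 1$ after combining $\tilde m(\al)\asymp\zeta^{-\delta}/\delta$ with the noise risk $\al\ta^{q-\delta}/(q-\delta)$.
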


Theorem \ref{thm-risk-dq} shows that the posterior $q$th moment converges at the minimax rate for $d_q$--type--distances over $\ell_0[s]$. By contrast, note that the results in \cite{cv12} for $d_q$ covered complexity priors on the dimension, but  not   spike and slab priors (which induce a binomial prior on the dimension), and were results on the posterior convergence as a probability measure and as such did not  imply convergence at minimax rate of e.g. the posterior mean. The proof of Theorem \ref{thm-risk-dq} is given in the supplement, Section \ref{supp-th1}.

Let us now briefly comment on the behaviour of some point estimators and on simulations from the empirical Bayes posterior. Under the conditions of Theorem \ref{thm-risk-dq}, the posterior mean is rate-minimax  for any $1\le q\le 2$. This follows from Theorem  \ref{thm-risk-dq} using the convexity of $\te\to d_q(\te,\te_0)$ if  $1\le q\le 2$ and Jensen's inequality.  More details on the posterior mean, in particular its suboptimality when $q<1$, can be found in the supplement, Section \ref{supp-pointest}. Concerning the posterior median, one can check that it is rate-minimax for any $0<q\le 2$, see Section \ref{supp-pointest} in the supplement. 
We also note in passing that simulation from the considered empirical Bayes posterior distribution is fast: for Cauchy-type slab tails, one can  directly use the EbayesThresh package of Johnstone and Silverman, see \cite{js05}. To compute $\hat\al$ corresponding to the precise slab form $\ga$ in \eqref{dens-hv}, one can compute approximations of $g(x)=(\ga*\phi)(x)$ by a numerical integration method and next insert this in the EbayesThresh subroutine computing $\hat\al$.  \\
   
\begin{remark}   
One may consider a density $\ga$ `on the boundary' by setting, say,
\[ \Delta(u) = u^{-1}\log^{-2}u. \]
For this choice of $\ga$, it can be checked that the risk bound of Theorem \ref{thm-risk-dq} holds uniformly for $q\in(0,2]$. However, this prior density has some somewhat undesired properties for confidence sets: it can be checked that the variance term of the empirical Bayes posterior is, for $q=2$, of the order
$s\ta(\al)^2/\log\ta(\al)$, which turns out to be sub-optimally small and a blow-up factor of order at least $\log\log(n/s)$ would be needed to guarantee coverage of the corresponding credible set.
\end{remark}

\subsection{Credible sets for fixed $\al$}

For $q\in(0,2]$, and as before $\hat\te_\al$ the posterior coordinate-wise median for fixed $\al$, we set
\begin{equation} \label{def: credible_mmle_q_fixed}
 \cC_{q,\al} = \{\te\in \RR^n,\ d_q(\te , \hatha) \le M v_{q,\al}(X)\}, 
\end{equation} 
where $M$ is a constant to be chosen below, and where we denote  
\[ v_{q,\al}(X)  = \int  d_q(\te , \hatha) d\Pi_{\al}(\te\given X). \]
Note that by Markov's inequality, for $M\geq 1/\beta$ it holds
\[ \Pi_{\al}[ \cC_{q,\al}  \given X]\ge 1-\beta,\]
so that $\cC_{q,\al}$ is a $1-\beta$ credible set (actually it is sufficient to take $M=1+\eps$, for arbitrary $\eps>0$, to achieve $1-\beta$ posterior coverage asymptotically, see Remark \ref{rem: cred:alternative} below).  
The proposition below reveals the frequentist properties of the so--constructed credible sets for a fixed value of the tuning parameter $\al$. Taking $\alpha\lesssim s\log^{\delta/2}(n/s)/n$, the {\em size} of the credible set is (nearly) optimal, reaching the (nearly) minimax rate $s\log^{q/2}(n)$, and by taking $\alpha\asymp s\log^{\delta/2}(n/s)/n$ the exact minimax rate (up to a constant) $s\log^{q/2}(n/s)$ is achieved.
On the other hand, the frequentist {\em coverage} properties of $\cC_{q,\al}$ behave in an opposite way with respect to $\alpha$. Indeed, one can find elements of the class $\ell_0[s]$ for which too small choice of the hyperparameter $\alpha$, i.e. $\alpha=o\big(s\log^{\delta/2}(n/s) /n\big)$, results in misleading uncertainty statements. At the same time sufficiently large values of $\alpha$ (i.e. $\alpha\gtrsim s\log^{\delta}(n/s)/n$) provide high frequentist coverage. Let us  introduce the set
\begin{equation} \label{def:tildeT}
\tilde{\Theta}_{s,\alpha}=\Big\{ \, \theta\in\ell_0[s]:\  |\left\{\,i:\, t(\alpha)/8\leq|\theta_{0,i}|\leq   t(\alpha)/4\,\right\}| = s \, \Big\}.
\end{equation}
Note that this sets contains non-zero signals with large enough (but not too large) values.

\begin{prop}\label{thm: coverage:nonadapt}
Let $\delta\in(0,2)$ be arbitrary and $\Pi_\al$ be the spike and slab prior with $\ga=\ga_\delta$ the density defined by \eqref{dens-hv}. 
Then for any $q>\delta$ and $s\log^{\delta/2}(n/s)/n\lesssim \al \le \al_1$ for sufficiently small constant $\al_1$, the Bayes credible set \eqref{def: credible_mmle_q_fixed} has, with respect to $d_q$, frequentist coverage tending to one for some sufficiently large choice of $M$ 
\begin{align*}
\sup_{\theta_0\in \ell_0[s]} P_{\theta_0} \big(\te_0\in \cC_{q,\al}\big)\rightarrow1.
\end{align*}
However, for $ \alpha=o\big(s\log^{\delta/2}(n/s)/n\big)$ the credible set  has frequentist coverage tending to zero for true signals $\te_0$ in the set $\tilde{\Theta}_{s,\al}$ defined in \eqref{def:tildeT}, for arbitrary choice of $M>0$, i.e. 
$$\sup_{\theta_0\in\tilde{\Theta}_{s,\al}}P_{\theta_0} \big(\te_0\in \cC_{q,\al}\big)\rightarrow0.$$ 
\end{prop}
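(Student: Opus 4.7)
The plan is to work coordinate-by-coordinate on both $d_q(\theta_0,\hat\theta_\al)=\sum_i|\theta_{0,i}-\hat\theta_{\al,i}|^q$ and $v_{q,\al}(X)=\sum_i\int|\theta_i-\hat\theta_{\al,i}|^q\,d\pi_{X_i}(\theta_i)$, exploiting the thresholding property of the posterior median ($\hat\theta_{\al,i}=0$ iff $|X_i|\le t(\al)$) together with the analytic estimates on $g=\phi*\ga$ from Lemmas \ref{lem-gf}--\ref{lem-momgf}. The pivotal computation, which drives both parts and is valid precisely because $q>\delta$, is that for a null observation $X\sim\cN(0,1)$,
\[
E_0\!\left[a(X)\int|\theta|^q\ga_X(\theta)\,d\theta\right]\;\asymp\;\int a(x)\,|x|^q\,\phi(x)\,dx\;\asymp\;\al\,t(\al)^{q-\delta},
\]
the dominant contribution coming from $|x|\asymp t(\al)$ once one uses $a(x)\phi(x)\asymp \al g(x)\asymp \al|x|^{-1-\delta}$ in the heavy-tail regime and the threshold identity $\phi(t(\al))\asymp \al\, t(\al)^{1+\delta}$. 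This is the same estimate powering the minimax rate in Theorem \ref{thm-risk-dq}.

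For the coverage claim, combining this pivot with a Bernstein-type concentration on the independent coordinate contributions $V_i:=\int|\theta-\hat\theta_{\al,i}|^q\,d\pi_{X_i}(\theta)$ gives, with probability tending to one and uniformly in $\theta_0\in\ell_0[s]$, $v_{q,\al}(X)\ge c(n-s)\,\al\,t(\al)^{q-\delta}\gtrsim s\log^{q/2}(n/s)$, using $\al\gtrsim s\log^{\delta/2}(n/s)/n$ and $t(\al)^2\sim 2\log(n/s)$. For the bias, the null contribution is bounded by $\sum_i|X_i|^q\mathbf{1}\{|X_i|>t(\al)\}$, of expectation $\lesssim n\,t(\al)^{q-1}\phi(t(\al))\asymp n\al t(\al)^{q-\delta}\asymp s\log^{q/2}(n/s)$; the signal contribution is $\lesssim\sum_{i:\theta_{0,i}\ne 0}(|\theta_{0,i}|^q+|\epsilon_i|^q)\lesssim s\,t(\al)^q\asymp s\log^{q/2}(n/s)$. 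Hence $d_q(\theta_0,\hat\theta_\al)\le C s\log^{q/2}(n/s)\le M v_{q,\al}(X)$ on a high-probability event, for any $M$ chosen large enough, uniformly over $\theta_0\in\ell_0[s]$.

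For the non-coverage claim, fix $\theta_0\in\tilde\Theta_{s,\al}$. Since $|\theta_{0,i}|\le t(\al)/4$ for every nonzero $\theta_{0,i}$, one has $P(|X_i|>t(\al))\le 2\bar\Phi(3t(\al)/4)\lesssim e^{-9t(\al)^2/32}$, so a Bernstein argument on the count of false negatives gives, with probability tending to one, at most $o(s)$ signal coordinates with $|X_i|>t(\al)$. The remaining $(1-o(1))s$ signals have $\hat\theta_{\al,i}=0$ and therefore
\[
d_q(\theta_0,\hat\theta_\al)\;\ge\;\sum_{i:\theta_{0,i}\ne 0,\,|X_i|\le t(\al)}|\theta_{0,i}|^q\;\gtrsim\;s\,t(\al)^q\;\asymp\;s\log^{q/2}(n/s).
\]
The pivot, this time as an upper bound, gives $E_{\theta_0}v_{q,\al}(X)\lesssim n\al t(\al)^{q-\delta}+s\,e^{-c\,t(\al)^2}t(\al)^q$, where the signal term is exponentially small because direct computation using the threshold relation yields $a(x)\lesssim e^{-c\,t(\al)^2}$ whenever $|x|\le t(\al)/4$. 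Under $\al=o(s\log^{\delta/2}(n/s)/n)$ this expectation is $o(s\log^{q/2}(n/s))$, and combining Markov's inequality for $v_{q,\al}(X)$ with the lower bound on $d_q(\theta_0,\hat\theta_\al)$ yields $P_{\theta_0}(d_q(\theta_0,\hat\theta_\al)>M v_{q,\al}(X))\to 1$ for any fixed $M>0$, uniformly over $\theta_0\in\tilde\Theta_{s,\al}$.

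The main technical obstacle is the sharp two-sided estimate $\int a(x)|x|^q\phi(x)\,dx\asymp \al\, t(\al)^{q-\delta}$ for $q>\delta$. The integral must be split into the regimes $|x|\lesssim 1$ (contributing $\asymp\al$), $1<|x|<t(\al)$ (where $\int|x|^{q-1-\delta}dx\asymp t(\al)^{q-\delta}$ dominates precisely because $q>\delta$), and $|x|>t(\al)$ (where $a\approx 1$ but the Gaussian tail keeps the contribution subdominant). This regime analysis, together with sharp control of $a(x)$, $g(x)/\phi(x)$, and $\int|\theta|^q\ga_x(\theta)d\theta$ via the preliminary lemmas, explains both why the critical scale for $\al$ is $s\log^{\delta/2}(n/s)/n$ and why the assumption $q>\delta$ is essential.
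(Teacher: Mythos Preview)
Your overall strategy matches the paper's proof closely: decompose bias and variance coordinatewise, use the pivot $n\al\,t(\al)^{q-\delta}$ as the order of the posterior spread on the null coordinates, and compare. However, there is one genuine gap in the coverage argument.

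\textbf{The signal bias bound is incorrect as written.} You claim the signal contribution to $d_q(\theta_0,\hat\theta_\al)$ satisfies
\[
\sum_{i:\theta_{0,i}\ne 0}|\theta_{0,i}-\hat\theta_{\al,i}|^q\;\lesssim\;\sum_{i:\theta_{0,i}\ne 0}\bigl(|\theta_{0,i}|^q+|\eps_i|^q\bigr)\;\lesssim\;s\,t(\al)^q.
\]
The first inequality is fine via the triangle inequality and $|\hat\theta_{\al,i}|\le|X_i|$, but the second fails: nothing in $\ell_0[s]$ bounds $|\theta_{0,i}|$, so $\sum_{i\in S_0}|\theta_{0,i}|^q$ can be arbitrarily large. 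The fix is the \emph{bounded shrinkage} property of the posterior median (Lemma~2 of Johnstone--Silverman, recalled as \eqref{boundedshr} in the paper): for every $x$, $|x-\hat\theta_\al(x)|\le t(\al)+b$, whence
\[
|\theta_{0,i}-\hat\theta_{\al,i}|\le |\theta_{0,i}-X_i|+|X_i-\hat\theta_{\al,i}|\le |\eps_i|+t(\al)+b,
\]
which gives $\mu_1\lesssim \sum_{i\in S_0}|\eps_i|^q+s\,t(\al)^q$ uniformly over $\ell_0[s]$. This is exactly what the paper uses, and without it the uniform coverage statement does not follow.

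\textbf{Minor issues.} Your threshold identity has a sign error: the correct relation is $\phi(t(\al))\asymp\al\,t(\al)^{-1-\delta}$ (from $(g/\phi)(t(\al))\asymp\al^{-1}$ and $g(x)\asymp x^{-1-\delta}$), not $\al\,t(\al)^{1+\delta}$. Consequently the null bias is of order $n\al\,t(\al)^{q-2-\delta}$, a full factor $t(\al)^{-2}$ below the variance, not $\asymp n\al\,t(\al)^{q-\delta}$ as you wrote; this only strengthens the conclusion but your display is internally inconsistent. For the non-coverage part, the claim that the signal contribution to $v_{q,\al}$ is $s\,e^{-c\,t(\al)^2}t(\al)^q$ is too strong: your bound $a(x)\lesssim e^{-c\,t(\al)^2}$ only covers $|X_i|\le t(\al)/4$, but for $\theta_{0,i}\in[t(\al)/8,t(\al)/4]$ the event $|X_i|\in[t(\al)/4,t(\al)]$ has nonnegligible probability. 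The paper shows the signal terms $v_1,v_2$ are in fact $O(s)$, which is still $o(s\log^{q/2}(n/s))$ and suffices.
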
 
\noindent   The next Proposition shows that the region of $\alpha$'s where the diameter of the fixed $\alpha$--credible set is optimal is in a sense `reversed'. Both results are proved in Section \ref{sec: coverage:nonadapt}.
\begin{prop}\label{thm: radius:nonadapt}
Let $\delta\in(0,2)$ be arbitrary and $\Pi_\al$ be the spike and slab prior with $\ga=\ga_\delta$ the density defined by \eqref{dens-hv}. 
Then for any $\delta<q\le 2$ and for any  $s\log^{\delta/2}(n/s)/n\ll  \al  \le \al_1$ for sufficiently small constant $\al_1$, the Bayes credible set \eqref{def: credible_mmle_q_fixed} has, with respect to $d_q$, suboptimal diameter 
\begin{align*} 
\inf_{\te_0\in\ell_0[s]} 
E_{\theta_0}\left[\text{diam}_q(\cC_{q,\al})\right] \gg s\log^{q/2}(n/s).
\end{align*}
However, if $(s/n)^{c_1}\lesssim \alpha\lesssim (s/n)\log^{\delta/2}(n/s)$ for some $c_1\ge 1$,  the credible set  has optimal diameter
$$\sup_{\theta_0\in\ell_0[s]}E_{\theta_0}\left[\text{diam}_q(\cC_{q,\al})\right] \leqa s\log^{q/2}(n/s).$$
\end{prop}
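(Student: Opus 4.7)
The plan is to reduce both claims to two-sided control of the posterior moment
$$v_{q,\al}(X)=\int d_q(\te,\hatha)\,d\Pi_\al(\te\given X).$$
Since $\cC_{q,\al}$ is a $d_q$--ball of radius $Mv_{q,\al}(X)$ centered at $\hatha$, the (quasi-)triangle inequality for $d_q$ (at worst a factor $2^q$) yields $\text{diam}_q(\cC_{q,\al})\asymp Mv_{q,\al}(X)$. By the product structure of the posterior \eqref{postsp}, $v_{q,\al}(X)=\sum_{i=1}^n r_{q,\al}(X_i)$, where
$$r_{q,\al}(x):=(1-a(x))|\hat\te_\al(x)|^q+a(x)\int|u-\hat\te_\al(x)|^q\ga_x(u)\,du,$$
and $\hat\te_\al(x)$ abusively denotes the one-coordinate posterior median. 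It thus suffices to prove matching upper and lower bounds on $E_{\te_0}[v_{q,\al}(X)]$ in the respective regimes of $\al$.

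For the upper bound (second display), I would split the coordinates into at most $s$ signal indices and at least $n-s$ zero indices. For a zero coordinate, $X\sim\cN(0,1)$ and the dominant contribution comes from $|X|\lesssim t(\al)\asymp\sqrt{2\log(1/\al)}$, where the approximation $a(x)\asymp\al g(x)/\phi(x)$ holds. Applying Fubini to $E_0[r_{q,\al}(X)\mathbf{1}_{|X|\le t(\al)}]$ and using the Gaussian factor $\Phi(t(\al)-u)-\Phi(-t(\al)-u)$ effectively truncates the slab integration to $|u|\lesssim t(\al)$, turning the (divergent for $q\ge\delta$) $q$th slab moment into $\int_0^{t(\al)}u^{q-1-\delta}\,du\asymp t(\al)^{q-\delta}$. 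This gives $E_0[r_{q,\al}(X)]\lesssim\al\log^{(q-\delta)/2}(1/\al)$; summing over $\ge n-s$ zero indices and using $\al\lesssim(s/n)\log^{\delta/2}(n/s)$ with $\log(1/\al)\asymp\log(n/s)$ (guaranteed by $\al\gtrsim(s/n)^{c_1}$) yields $\lesssim s\log^{q/2}(n/s)$. For signal coordinates, standard one-coordinate estimates of the type used for Theorem \ref{thm-risk-dq} give $E_{\te_{0,i}}[r_{q,\al}(X)]\lesssim\log^{q/2}(n/s)$, contributing $s\log^{q/2}(n/s)$ in total.

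For the lower bound (first display), drop the nonnegative signal terms and use $v_{q,\al}(X)\ge\sum_{i:\te_{0,i}=0}r_{q,\al}(X_i)$. Since $|\{i:\te_{0,i}=0\}|\ge n-s\asymp n$, it suffices to prove the matching lower bound $E_0[r_{q,\al}(X)]\gtrsim\al t(\al)^{q-\delta}$. This is obtained by restricting the double integral to an annulus $|u|\in[t(\al)/2,t(\al)]$ and $x$ in a comparable range where $a(x)\asymp\al g(x)/\phi(x)$ and where both the Gaussian factor and the slab $\ga(u)\asymp u^{-1-\delta}$ remain of the correct order. Summing the $n-s\asymp n$ zero contributions and invoking $\al\gg(s/n)\log^{\delta/2}(n/s)$ then gives $n\al t(\al)^{q-\delta}\gg s\log^{\delta/2}(n/s)\cdot\log^{(q-\delta)/2}(n/s)=s\log^{q/2}(n/s)$.

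The main obstacle is pinning down $E_0[r_{q,\al}(X)]$ to the order $\al t(\al)^{q-\delta}$ in both directions. This requires careful analysis at the transition $|x|\sim t(\al)$ where $a(x)$ passes from $\asymp\al g(x)/\phi(x)$ to being of order one, together with sharp estimates of the convolution $g=\phi*\ga$ and the ratio $g/\phi$ in that window. The quantitative tools built for Theorem \ref{thm-risk-dq} (notably Lemmas \ref{lem-gf} and \ref{lem-momgf}) should provide the necessary bricks; what is specific here is verifying that the Gaussian-tail truncation captures exactly the $t(\al)^{q-\delta}$ scaling coming from the heavy slab tails, which is where the hypothesis $q>\delta$ enters decisively.
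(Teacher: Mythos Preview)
Your plan is correct and follows essentially the same route as the paper: reduce to two-sided control of $v_{q,\al}$, and prove the key coordinate-wise estimate $E_0[r_{q,\al}(X)]\asymp\al\,t(\al)^{q-\delta}$ on noise indices together with an $O(t(\al)^q)$ bound on signal indices. A few remarks on how the paper's execution differs from yours.

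\emph{Upper bound.} Rather than computing $E_0[r_{q,\al}(X)]$ directly via your Fubini trick, the paper routes through $\te_0$ by the quasi-triangle inequality, bounding $E_{\te_0}\int d_q(\te,\te_0)\,d\Pi_\al(\te\given X)$ using Lemma~\ref{lemmomga} and $E_{\te_0}d_q(\hatha,\te_0)$ using the posterior-median risk bounds of \cite{js04}. This immediately gives $E_{\te_0}[\text{diam}_q(\cC_{q,\al})]\leqa n\al\log^{(q-\delta)/2}(1/\al)+s\log^{q/2}(1/\al)$. Your direct Fubini computation is a pleasant alternative, but note that Lemma~\ref{lemmomga} is stated with the centering at a fixed $\mu$, not at $\hat\te_\al(x)$; to avoid reproving it you would still want to pass through $\te_0$ as the paper does.

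\emph{Lower bound.} The paper does not merely bound the expectation but proves a concentration statement: it lower-bounds $v_3\ge c\al\sum_{i\notin S_0}(g/\phi)(\veps_i)(|\veps_i|^q+1)\1_{C_0\le|\veps_i|\le\ta(\al)}$ using Lemma~\ref{lem-lbv} for $\omega_q$, computes the mean $\asymp n\al\ta(\al)^{q-\delta}$, and checks the variance is of smaller order, obtaining $v_{q,\al}\geqa n\al\ta(\al)^{q-\delta}$ with high probability. For Proposition~\ref{thm: radius:nonadapt} as stated (expectations), your linearity-of-expectation argument is sufficient and simpler; the paper proves the stronger in-probability version because it is reused in the coverage proof of Proposition~\ref{thm: coverage:nonadapt}. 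Your annulus restriction works, but the paper's route via Lemma~\ref{lem-lbv} (which gives $\omega_q(x)\geqa 1+|x|^q$ globally) is cleaner: it reduces the lower bound to $\int_{C_0}^{\ta(\al)} g(x)x^q\,dx\asymp\ta(\al)^{q-\delta}$ in one line.
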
 

\begin{remark}\label{rem: cred:alternative}
One can consider other types of credible sets as well, for instance balls centered around the posterior coordinate-wise median, i.e.
\begin{align}
\tilde{C}_{q,\alpha}=\left\{\theta\in\mathbb{R}^n,\, d_q(\theta,\hat{\theta}_\alpha)\leq r_\beta \right\}, \quad \text{with $r_\beta$ taken as}\quad \Pi_\alpha (\tilde{C}_{q,\alpha}|X )=1-\beta\label{def: credible:standard}
\end{align}
(if the equation has no solution, one takes the smallest $r_\be$ such that $\Pi_\alpha (\tilde{C}_{q,\alpha}|X )\ge 1-\beta$). 

One can show that these two types of credible sets are the same up to a $(1+o(1))$ blow-up factor for every fixed $0<\beta<1$, since 
\begin{align}
r_{\beta}=(1+o(1))v_{q,\alpha}(X),\label{eq: equiv:nonadapt}
\end{align}
for all $\alpha\in(M_n (\log_2 n)^{\delta/2}/n,\alpha_1)$, with $\alpha_1>0$ a small enough constant and $M_n\rightarrow \infty$ arbitrarily slowly. The proof of this statement  is given in Section \ref{sec: cred:alternative} of the supplement.

Therefore, by inflating the credible set \eqref{def: credible:standard} by a sufficiently large constant factor $L$, it has frequentist coverage tending to one for $s\log^{\delta/2}(n/s)/n\lesssim \alpha\leq\alpha_1$, that is, for $\tilde{C}_{q,\alpha}(L)=\{\theta\in\mathbb{R}^n,\, d_q(\theta,\hat{\theta}_\alpha)\leq Lr_\beta \}$, we have
\begin{align*}
\sup_{\theta_0\in\ell_0[s]}P_{\theta_0}(\theta_0\in \tilde{C}_{q,\alpha}(L))\rightarrow 1.
\end{align*}
However, for $ \alpha=o\big(s\log^{\delta/2}(n/s)/n\big)$ the credible set  has frequentist coverage tending to zero for true signals $\te_0$ in the set $\tilde{\Theta}_{s,\al}$.
\end{remark}

\subsection{Adaptive credible sets for $q=2$} \label{sec-credible}

In this section we investigate the adaptive version of the credible set $\mathcal{C}_\al$ introduced in \eqref{def: credible_mmle_q_fixed}, in the case $q=2$.
Define the random set, for $M\ge 1$ to be chosen,
\begin{align} 
 \cC_{\hat\al} = \cC_{2,\hat\al} = \Big\{\, \te\in \RR^n,\ \|\te - \hat\te_{\hat\al}\|^2 \le M v_{\hat\al}(X)\, \Big\}, \label{def: credible_mmle}
\end{align}
where   $\|\cdot\|^2=d_2$ is the square of the standard euclidian norm and
\[ v_{\hat\al}(X)  = \int \| \theta - \hat\theta_{\hat\al} \|^2 d\Pi_{{\hat\al}}(\te\given X). \]
By Markov's inequality the set $\cC_{\hat\al}$ has at least $1-\beta$ posterior coverage for $M\geq 1/\beta$. Also, it is a direct consequence of Theorem \ref{thm-risk-dq}, Markov's inequality and the rate optimality of the posterior median estimator that the size of this sets adapts to the minimax rate: 
for every $\epsilon>0$ there exists $M_{\epsilon}>0$ such that for any $\theta_0\in \ell_0[s]$, 
$$P_{\theta_0}( v_{\hat\al}(X)\geq M_{\epsilon}s\log(n/s))\leq \epsilon.$$
So the credible set has an optimal diameter uniformly. 
However, from similar arguments as in \cite{nickl:geer:2013}, this means that the present credible set cannot have honest coverage for every sparse $\theta_0$, since the construction of adaptive and honest confidence sets for the quadratic risk is impossible in the sparse normal means model, see the Supplement \cite{castillo:szabo:2018:supplement} for a precise statement and proof.

To achieve good frequentist coverage one has to introduce certain extra assumptions on the parameter set $\ell_0[s]$. We consider the excessive-bias restriction investigated in the context of the sparse normal means model in \cite{belnur15,vsv17}, i.e.
we say that $\te_0\in\ell_0[s]$ satisfies the \emph{excessive-bias restriction} 
 for constants $A>1$ and  $C_2,D_2>0$, if there exists an integer $s\ge \ell\ge \log^2 n$, with
\begin{align}\label{condition: EB}
\sum_{i: |\te_{0,i}|< A\sqrt{2\log (n/\ell)}}\te_{0,i}^2\le D_2 \ell\log (n/\ell),
\qquad \left| \big\{ i: |\te_{0,i}|\geq A\sqrt{2\log (n/\ell)}\big\}\right|  \geq \frac{\ell}{C_2}.
\end{align}
We denote the set of all such vectors $\te_0$ by $\Theta_0^2[s]=\Theta_0^2[s;A,C_2,D_2]$, and let $\tilde s=\tilde s (\te_0)$ be $\big| \bigl\{i: |\te_{0,i}|\geq A\sqrt{2\log (n/\ell)}\bigr\}\big|$, for the smallest possible $\ell$ such that \eqref{condition: EB} is satisfied. We note that the assumption $\ell\geq \log^2 n$ can be relaxed to $\ell\geq 1$ by considering a modified MLE estimator, as discussed below assumption \eqref{techsn}. However for the sake of simplicity and better readability we work under the assumption $\ell\geq\log^2 n$. The necessity of condition \eqref{condition: EB} is investigated in Section \ref{sec-credible-necessity}. 

By definition $\tilde{s}\leq s$ and possibly, if $\te_0$ has many small coefficients, one can have $\tilde{s}=o(s)$.
 We call the quantity $\tilde{s}$ the {\it effective sparsity} of $\te_0\in\ell_0[s]$. 
  It shows the number of large enough signal components which can be distinguished from the noise. The rest of the signals are too small to be detectable, but at the same time their energy (the sum of their squares) is not too large so the bias of standard estimators (which will shrink or truncate the observations corresponding to small signals) won't be dominant. Our goal is to adapt to the present effective sparsity value and at the same time have appropriate frequentist coverage for the credible sets.

\begin{thm}\label{thm: coverage_mmle}
Let $\delta\in(0,2)$ be arbitrary and let $\Pi_\al$ be the spike and slab prior with $\ga=\ga_\delta$ the density defined by \eqref{dens-hv}. Let $\hat \al$ be the corresponding MMLE given by \eqref{defhal} and  suppose that the excessive-bias condition \eqref{condition: EB} hold. Then the MMLE empirical Bayes credible set \eqref{def: credible_mmle} has, with respect to the $d_2$--type distance ($q=2$),  adaptive size to the effective sparsity $\tilde{s}$ and frequentist coverage tending to one, i.e. for any $s=o(n)$
\begin{align} 
\inf_{\theta_0\in \Theta_0^2[s;A,C_2,D_2]}P_{\theta_0}\big(diam (\cC_{\hat\al})\leq C  \tilde{s} \log (n/\tilde{s}) \big)\rightarrow 1,\\
\inf_{\theta_0\in \Theta_0^2[s;A,C_2,D_2]}P_{\theta_0} \big(\te_0\in \cC_{\hat\al}\big)\rightarrow1,
\end{align}
for sufficiently large constants $C,M>0$  (the latter in \eqref{def: credible_mmle}), depending on $A, C_2$ and $D_2$ in the excessive bias condition.
\end{thm}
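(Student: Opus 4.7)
The strategy is to reduce the adaptive statement to a fixed-$\al$ analysis analogous to Propositions~\ref{thm: coverage:nonadapt}--\ref{thm: radius:nonadapt} with $s$ replaced by the effective sparsity $\tilde{s}=\tilde{s}(\te_0)$, by first localising $\hat\al$ on a good data-driven event. Concretely, I would first prove that under the excessive-bias assumption, on an event of $P_{\te_0}$-probability tending to $1$,
\[
\frac{\tilde s}{n} \;\lesssim\; \hat\al \;\lesssim\; \frac{\tilde s}{n}\log^{\delta/2}(n/\tilde s),
\]
by studying the sign of the score $S(\al)=\sum_{i=1}^n \bef(X_i,\al)$ at the endpoints of this range. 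The big signals $I_2:=\{i:|\te_{0,i}|\ge A\sqrt{2\log(n/\ell)}\}$, of which there are $\asymp \tilde s$ by \eqref{condition: EB}, each contribute a term of the order of $1/\al$ to $\bE_{\te_0} S(\al)$ in the relevant range, while the moment bounds of Lemmas~\ref{lem-gf}--\ref{lem-momgf} applied to the zero indices $I_0:=\{\te_{0,i}=0\}$ and the small-signal indices $I_1:=\{0<|\te_{0,i}|<A\sqrt{2\log(n/\ell)}\}$ (whose squared bias is controlled by the first part of \eqref{condition: EB}) keep the other contributions subdominant. The argument parallels \cite{cm17} for the Cauchy slab but now crucially uses \eqref{condition: EB} to replace the global sparsity $s$ by the effective sparsity $\tilde s$ in all score estimates.

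Once $\hat\al$ is localised, I would analyse the two random quantities coordinate-wise on the good event, using the split $I_0\cup I_1\cup I_2$ together with the thresholding property $\hat\te_{\hat\al,i}=0 \Leftrightarrow |X_i|\le t(\hat\al)$ and $t(\hat\al)\asymp \sqrt{2\log(n/\tilde s)}$. This leads to a bound of the form
\[
E_{\te_0}\|\te_0-\hat\te_{\hat\al}\|^2 \;\lesssim\; \underbrace{1}_{I_0\ \text{(false positives)}} \;+\; \underbrace{D_2\,\ell\log(n/\ell)}_{I_1\ \text{(EB small signals)}} \;+\; \underbrace{\tilde s\log(n/\tilde s)}_{I_2\ \text{(shrinkage of big signals)}},
\]
and to an entirely analogous upper bound for $v_{\hat\al}(X)=\sum_i \int(\te_i-\hat\te_{\hat\al,i})^2 d\Pi_{\hat\al}(\te\given X)_i$. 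Because $\ell\log(n/\ell)\lesssim \tilde s\log(n/\tilde s)$ (via $\ell\le C_2\tilde s$ from \eqref{condition: EB}), both sums are of order $\tilde s\log(n/\tilde s)$. Concentration around the mean is obtained from the fact that $v_{\hat\al}(X)$ and $\|\te_0-\hat\te_{\hat\al}\|^2$ are sums of independent coordinatewise terms with controlled moments. This already yields the diameter bound since $\mathrm{diam}(\cC_{\hat\al})\le 4 M v_{\hat\al}(X)$.

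To prove coverage I then need a matching lower bound $v_{\hat\al}(X)\gtrsim \tilde s\log(n/\tilde s)$ on the good event, so that the bias $\|\te_0-\hat\te_{\hat\al}\|^2$ can be absorbed into $Mv_{\hat\al}(X)$ for $M$ large enough depending on $A,C_2,D_2$. This lower bound should come exclusively from $I_2$: for each $i\in I_2$, one has $|X_i|\ge c\sqrt{\log(n/\tilde s)}$ with high probability, so the posterior weight $a_{\hat\al}(X_i)$ is bounded away from $0$ and, crucially, the squared gap between the posterior mean on the slab part and the posterior median contributes a term of order $\log(n/\tilde s)$ to $v_{\hat\al,i}(X_i)$; summing over the $|I_2|\asymp \tilde s$ indices gives the claim. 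Combining the bias bound, the lower bound on $v_{\hat\al}(X)$, and a Markov-type argument yields $\te_0\in\cC_{\hat\al}$ with $P_{\te_0}$-probability tending to $1$.

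The main obstacle is carrying the three steps out \emph{simultaneously} on the same event: the upper bound on $v_{\hat\al}(X)$ requires that $\hat\al$ be not too large (else the contribution of $I_0$, which is roughly of order $n\hat\al\, t(\hat\al)^2$, overshoots $\tilde s\log(n/\tilde s)$), while the lower bound requires that $\hat\al$ be not too small (else the slab tilt collapses at the big-signal indices and the variance is killed). The excessive-bias condition is precisely what forces $\hat\al$ into the narrow window in which both estimates hold with matching constants; calibrating $M$ against these constants, which all depend only on $A$, $C_2$, $D_2$, completes the argument.
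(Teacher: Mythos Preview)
Your overall architecture---localise $\hat\al$ under the excessive-bias condition, then compare the bias $\|\te_0-\hat\te_{\hat\al}\|^2$ to a two-sided estimate of $v_{\hat\al}(X)$---matches the paper. The localisation and the upper bounds on bias and variance are essentially correct, and in particular the paper also splits along small/medium/large signal indices (their $Q_1,Q_2,Q_3$) and uses Lemma~\ref{thm: bounds_mmle} to sandwich $\hat\al$ between two deterministic levels $\tilde\al_1,\tilde\al_2$ of order $(\tilde s/n)\log^{\delta/2}(n/\tilde s)$.

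The genuine gap is your lower bound on $v_{\hat\al}(X)$. You propose to extract it from the big-signal set $I_2$, arguing that each $i\in I_2$ contributes a term of order $\log(n/\tilde s)$ via ``the squared gap between the posterior mean on the slab part and the posterior median''. This is not true in general. If $|\te_{0,i}|$ is very large (say $|\te_{0,i}|=n$, which the excessive-bias condition certainly allows for every $i\in I_2$), then with overwhelming probability $|X_i|$ is much larger than $t(\hat\al)$, the posterior weight $a_{\hat\al}(X_i)$ is essentially $1$, and the posterior on $\te_i$ is close to $\ga_{X_i}$, whose spread around its median is $O(1)$ by Lemma~\ref{lemmomga}. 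The term $(1-a(X_i))|\hat\te_{\hat\al,i}|^2$ is exponentially small in $(|X_i|-\ta(\hat\al))^2$ by \eqref{postwb}. Hence the contribution of such an index to $v_{\hat\al}(X)$ is only $O(1)$, and the $I_2$ block yields at best $O(\tilde s)$, a factor $\log(n/\tilde s)$ short.

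The paper obtains the lower bound from the \emph{zero} indices instead: on the event $\{\tilde\al_1\le\hat\al\le\tilde\al_2\}$ they bound
\[
v_{\hat\al}(X)\ \ge\ c\,\tilde\al_1\sum_{i\notin S_0}\frac{g}{\phi}(\veps_i)(|\veps_i|^2+1)\1_{C_0\le|\veps_i|\le\ta(\tilde\al_2)}
\]
via \eqref{eq: coverage:LB:var}, and the expectation/variance computation above \eqref{eq:coverage:LB:var0} shows this sum is concentrated around $n\tilde\al_1\,\ta(\tilde\al_2)^{2-\delta}\asymp \tilde s\log(n/\tilde s)$. In words: the variance of the credible ball is driven by the aggregate slab mass that the many noise coordinates place away from zero, not by the few large signals. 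This is exactly the mechanism behind the first half of Proposition~\ref{thm: radius:nonadapt}, and it is the point at which your argument needs to be repaired.
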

This result is a particular case of Theorem \ref{thm: coverage_mmle_q} below, whose proof is given in Section  \ref{sec: coverage_mmle_q}. 
Similar to the risk results presented in Section \ref{subsec-motiv}, choosing a Laplace slab would lead to suboptimal diameter of the confidence sets. A heavy tail slab is crucial when following an empirical Bayes method for estimating $\alpha$ in the spike and slab prior.

\subsection{Adaptive credible sets: extension to the case $q<2$} \label{sec-credible-q}

Let $q\in(0,2]$ and let us start by defining an analogous condition to \eqref{condition: EB} in order to control the bias of the posterior.  We say that $\te_0\in\ell_0[s]$ satisfies the \emph{$d_q$-excessive-bias restriction}, in short $EB(q)$, for constants $A>1$ and  $C_q,D_q>0$, if there exists an integer $s\ge \ell\ge \log^2 n$ with
\begin{align}\label{condition: EB_q}
\sum_{i: |\te_{0,i}|< A\sqrt{2\log (n/\ell)}}|\te_{0,i}|^q\le D_q \ell\log^{q/2}(n/\ell),
\qquad \Bigl| \{ i: |\te_{0,i}|\geq A\sqrt{2\log (n/\ell)}\} \Bigr|  \geq \frac{\ell}{C_q}.
\end{align}
The set of all such vectors $\te_0$ is denoted $\Theta_0^q[s]=\Theta_0^q[s;A,C_q,D_q]$. For any $q'\in(q,2]$,
\begin{align}
 \Theta_0^q[s;A,C_q,D_q]
 \subset \Theta_0^{q'}[s;A,C_q,D_q(\sqrt{2}A)^{q'-q}]. \label{eq: subset:EB}
\end{align}
This means that up to a change in the constants, the $EB(q)$ condition becomes stronger when $q$ decreases. Let $\tilde s_{q}=\tilde s_q (\te_0)$ be $\big|\bigl\{i: |\te_{0,i}|\geq A\sqrt{2\log (n/\ell)}\bigr\}\big|$, for the smallest possible $\ell$ such that \eqref{condition: EB_q} is satisfied. 

Next we define the random set, for any $q\in(0,2]$ and $M\ge 1$ to be chosen,
\begin{align}
 \cC_{q,\hat\al}  = \Big\{\, \te\in \RR^n,\ d_q(\te,\hat\te_{\hat\al}) \le M v_{q,\hat\al}(X)
 \, \Big\},\label{def: credible_mmle_q}
\end{align}
where $v_{q,\hat\al}(X)  = \int d_q(\theta,\hat\theta_{\hat\al}) d\Pi_{{\hat\al}}(\te\given X)$. 
By Markov's inequality this set has at least $1-\beta$ posterior coverage for $M\geq 1/\beta$. Once again, 
from Theorem \ref{thm-risk-dq} and   the optimality of the posterior median estimator in $d_q$, the size of these sets adapts to the minimax rate:
 for every $\epsilon>0$, there exists $M_{\epsilon}>0$ such that for any $\theta_0\in \ell_0[s]$,
$$P_{\theta_0}( v_{q,\hat\al}(X)\geq M_{\epsilon}s\log^{q/2}(n/s))\leq \epsilon.$$

\begin{thm}\label{thm: coverage_mmle_q}
Fix $\delta\in(0,2)$ and let $q\in (2\delta,2]$. Let $\ga=\ga_\delta$ be the density defined by \eqref{dens-hv}. Let $\hat \al$ be the corresponding MMLE given by \eqref{defhal} and  suppose \eqref{condition: EB_q} holds. Then the MMLE empirical Bayes credible set \eqref{def: credible_mmle_q} for sufficiently large $M$ ($M>3c_0 (2^qD_qC_q+1)(2^{q-1}\vee1 )2^6 (q-\delta)/\delta$ is sufficiently large, where $c_0$ is given in Lemma \ref{lem: effective:sparsity}),
 has  adaptive size to the effective sparsity $\tilde{s}_{q}$ and frequentist coverage tending to one, i.e. for any $s=o(n)$
\begin{align}
\inf_{\theta_0\in \Theta_0^q[s;A,C_q,D_q]} P_{\theta_0}\big(diam_q(\cC_{q,\hat\al})\leq C  \tilde{s}_{q} \log^{q/2} (n/\tilde{s}_{q}) \big)\rightarrow 1,\\
\inf_{\theta_0\in \Theta_0^q[s;A,C_q,D_q]}  P_{\theta_0} \big(\te_0\in \cC_{q,\hat\al}\big)\rightarrow1,
\end{align}
for some sufficiently large constant $C>0$ (depending on $A, C_q$ and $D_q$) 
\end{thm}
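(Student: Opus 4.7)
The plan is to combine an \emph{a priori} localisation of the MMLE $\hat\al$ under the condition $EB(q)$ with matching upper and lower bounds on the posterior $q$th moment $v_{q,\hat\al}(X)$. The key preparatory ingredient is Lemma \ref{lem: effective:sparsity}: under $EB(q)$ with effective sparsity $\tilde s_q$, on an event $\cE_n$ of $P_{\te_0}$-probability tending to one, $\hat\al$ lies in an interval of the form $[c_0\tilde s_q/n,\; c_0\tilde s_q\log^{\delta/2}(n/\tilde s_q)/n]$ (with $c_0$ depending on $A,C_q,D_q$), so that the associated median threshold satisfies $t(\hat\al) \asymp \sqrt{2\log(n/\tilde s_q)}$. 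This localisation is the main technical obstacle, and it is obtained by adapting the score-function bounds of \cite{js04,cm17} to the excessive-bias class rather than plain $\ell_0[s]$; the constraint $q>2\delta$ is used here so that the heavy-tailed slab $\ga_\delta$ contributes only a lower order remainder in the relevant score estimates.

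\textbf{Diameter bound.} For $\al$ in the above range, I would split the coordinates into the signal set $S_0=\{i:\,|\te_{0,i}|\ge A\sqrt{2\log(n/\ell)}\}$ (of cardinality at least $\tilde s_q/C_q$ and at most $s$) and its complement. For the one-dimensional posterior at coordinate $i$, bound $\int|\te_i-\hat\te_{\al,i}|^q d\Pi_\al(\cdot\given X)$ by a constant multiple of $t(\al)^q\asymp \log^{q/2}(n/\tilde s_q)$ on $S_0$ and of $X_i^q\wedge t(\al)^q$ on $S_0^c$, using the per-coordinate posterior bounds developed for Theorem \ref{thm-risk-dq}. Summing, controlling the noise-coordinate contribution via the slab's heavy tail (again where $q>2\delta$ enters) and the small-signal contribution via the $EB(q)$ bias bound $\sum_{S_0^c}|\te_{0,i}|^q\le D_q \ell\log^{q/2}(n/\ell)$, gives $v_{q,\hat\al}(X)\le C\,\tilde s_q\log^{q/2}(n/\tilde s_q)$ on $\cE_n$. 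By definition of $\cC_{q,\hat\al}$ and the sub-$q$-additivity of $d_q$,
\[
\text{diam}_q(\cC_{q,\hat\al})\le 2(2^{q-1}\vee 1)\,M\,v_{q,\hat\al}(X),
\]
proving the first claim.

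\textbf{Coverage.} Using $d_q(\te_0,\hat\te_{\hat\al})\le (2^{q-1}\vee 1)\bigl(d_q(\te_0,\te)+d_q(\te,\hat\te_{\hat\al})\bigr)$ and integrating against $\Pi_{\hat\al}(\cdot\given X)$ yields
\[
d_q(\te_0,\hat\te_{\hat\al})\le (2^{q-1}\vee 1)\Bigl(\int d_q(\te_0,\te)\,d\Pi_{\hat\al}(\te\given X)+v_{q,\hat\al}(X)\Bigr).
\]
Under $EB(q)$ the posterior risk $\int d_q(\te_0,\te)\,d\Pi_{\hat\al}$ is handled just as $v_{q,\hat\al}(X)$ was above, giving on $\cE_n$ an upper bound $K\tilde s_q\log^{q/2}(n/\tilde s_q)$ with $K\le c_0(2^qD_qC_q+1)2^6(q-\delta)/\delta$. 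It then suffices to prove the matching \emph{lower bound} $v_{q,\hat\al}(X)\ge c\,\tilde s_q\log^{q/2}(n/\tilde s_q)$. This is the most delicate step: it is obtained by restricting the posterior-integral to the $\ge\tilde s_q/C_q$ signal coordinates in $S_0$, on which $|X_i|$ is with high probability of order $\sqrt{2\log(n/\tilde s_q)}$, only a constant factor above $t(\hat\al)$, so the posterior weights $a(X_i)$ and $1-a(X_i)$ are both bounded below by a fixed positive constant; consequently, $\int|\te_i-\hat\te_{\hat\al,i}|^q d\Pi_{\hat\al}(\cdot\given X)$ is bounded below by a constant multiple of $|X_i|^q\asymp \log^{q/2}(n/\tilde s_q)$ for each $i\in S_0$. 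Summing gives the lower bound. Combining both directions and choosing $M$ strictly larger than $3c_0(2^qD_qC_q+1)(2^{q-1}\vee 1)2^6(q-\delta)/\delta$ forces $d_q(\te_0,\hat\te_{\hat\al})\le M\,v_{q,\hat\al}(X)$ on $\cE_n$, i.e.\ $\te_0\in\cC_{q,\hat\al}$.

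The hard part is the two-sided control of $v_{q,\hat\al}(X)$ uniformly over $\hat\al$ in the allowed range: the upper bound requires tight per-coordinate estimates that exploit the specific tails of $\ga_\delta$, while the lower bound requires ruling out pathological realisations where $|X_i|$ jumps too far above threshold on too many signal coordinates and collapses the posterior onto the slab. Both are handled by the preliminary lemmas assembled in Sections \ref{sec:proofs-generalities}--\ref{sec:proofs:cs}.
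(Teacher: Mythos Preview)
Your overall architecture is right: localise $\hat\al$, then obtain matching upper and lower bounds on $v_{q,\hat\al}(X)$ and an upper bound on the bias $d_q(\te_0,\hat\te_{\hat\al})$. The diameter argument and the upper bound on the bias are in the correct spirit (though note that the localisation of $\hat\al$ is \emph{not} Lemma~\ref{lem: effective:sparsity}, which only compares $\tilde s_q$ and $\tilde s$; it is a separate Bernstein-type concentration result, Lemma~\ref{thm: bounds_mmle}).

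The genuine gap is in your lower bound on $v_{q,\hat\al}(X)$. You propose to obtain it from the \emph{signal} coordinates in $S_0$, arguing that there both $a(X_i)$ and $1-a(X_i)$ are bounded below by a positive constant and that the per-coordinate posterior spread is of order $|X_i|^q\asymp\log^{q/2}(n/\tilde s_q)$. This does not work. Under $EB(q)$ the detectable signals satisfy $|\te_{0,i}|\ge A\sqrt{2\log(n/\ell)}$ with $A>1$ strictly, and may be arbitrarily large; hence with high probability $|X_i|$ sits strictly above the threshold $\ta(\hat\al)\sim\sqrt{2\log(n/\tilde s)}$. By \eqref{postwb}, $1-a(X_i)\le \exp\{-\tfrac12(|X_i|-\ta(\hat\al))^2\}\le (\tilde s/n)^{(A-1)^2+o(1)}\to 0$, so $1-a(X_i)$ is \emph{not} bounded below. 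Simultaneously, once $|X_i|$ is well above threshold, the posterior median $\hat\te_{\hat\al,i}$ is close to $X_i$ and the slab contribution $\int|u-\hat\te_{\hat\al,i}|^q\ga_{X_i}(u)\,du$ is of order a constant (Lemma~\ref{lemmomga}), not $\log^{q/2}(n/\tilde s_q)$. So each signal coordinate contributes only $O(1)$ to $v_{q,\hat\al}$, and summing over $\tilde s_q$ of them gives at best $O(\tilde s_q)$, short by the factor $\log^{q/2}(n/\tilde s_q)$.

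The paper gets the lower bound from the \emph{noise} coordinates instead: using the term $v_3$ in the decomposition \eqref{eq: variance}, one has on $\{C_0\le|\eps_i|\le\ta(\hat\al)\}$ that $a(\eps_i)\ge c\,\hat\al\,(g/\phi)(\eps_i)$ and $\omega_q(\eps_i)\ge c(1+|\eps_i|^q)$ by Lemma~\ref{lem-lbv}. Summing over the $n-s$ noise indices, a first/second moment computation (exactly as in the fixed-$\al$ argument leading to \eqref{eq:coverage:LB:var0}) gives, on the event $\{\tilde\al_1\le\hat\al\le\tilde\al_2\}$,
\[
v_{q,\hat\al}\;\ge\; c\,\tilde\al_1\sum_{i\notin S_0}\frac{g}{\phi}(\eps_i)(1+|\eps_i|^q)\1_{C_0\le|\eps_i|\le\ta(\tilde\al_2)}
\;\ge\;\frac{\delta}{2^4 d_1(q-\delta)}\,\tilde s\log^{q/2}(n/\tilde s),
\]
where the factor $\delta/(q-\delta)$ comes from integrating $g(x)x^q\asymp x^{q-1-\delta}$ up to $\ta(\tilde\al_2)$ and then converting $n\tilde\al_1$ into $\tilde s\,\zeta(\tilde\al_1)^\delta/\delta$ via the defining relation $d_1\tilde\al_1\tilde m(\tilde\al_1)=\tilde s/n$ and Lemma~\ref{lem-momgf}. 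This is where the explicit constant in the threshold $M>3c_0(2^qD_qC_q+1)(2^{q-1}\vee1)2^6(q-\delta)/\delta$ arises. Your signal-based route cannot produce this, and you would not be able to match bias and variance for coverage.
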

The proof of this result is given in Section \ref{sec: coverage_mmle_q}.

\begin{remark}
The results of Theorem \ref{thm: coverage_mmle_q} are uniform over $q\in(2\delta,2)$ provided $M$ is larger than $\sup_{q\in(2\delta,2)}3c_0 (2^qD_qC_q+1)(2^{q-1}\vee1 )2^6 (q-\delta)/\delta$ as seen in the proof of Theorem \ref{thm: coverage_mmle_q}.
\end{remark}
\begin{remark}
In Lemma \ref{lem: effective:sparsity} we shall see that, under condition $EB(q)$, the parameters $\tilde{s}$ and $\tilde{s}_{q}$ are equivalent up to a constant multiplier,  hence the result above also holds with the effective sparsity $\tilde{s}$ corresponding to the parametrisation $\Theta_0^2(s;A,C_q,D_q(\sqrt{2}A)^{2-q})$.
\end{remark}
\begin{remark}\label{rem: equiv:adapt}
We note that the same results as in Theorem \ref{thm: coverage_mmle_q} hold for the inflated version of the credible set define in \eqref{def: credible:standard} as well. The proof is deferred to Section \ref{sec: cred:alternative} of the supplementary material.
\end{remark}
\subsection{Excessive bias conditions: comparison and minimax necessity}
\label{sec-credible-necessity}

In this section for simplicity we restrict the discussion to the case $q=2$. Let us briefly summarise the results obtained in \cite{nickl:geer:2013}, where the authors work in the random design sparse regression model. If $s$ is of smaller order than $\sqrt{n}$, the authors in \cite{nickl:geer:2013} show, see their Theorem 4 part (A), that construction of adaptive and honest confidence sets is impossible (strictly speaking this result is for the regression model, and for completeness we derive a sequence model version of it in the Supplement \cite{castillo:szabo:2018:supplement}). They also show, see their Theorem 4 part (B), that if one cuts out part of the parameter set, thus obtaining a certain {\em slicing} formulated in terms of a certain separation (or `testing') condition, adaptive confidence sets do again exist. 
If one knows beforehand that one deals with a moderately sparse vector, for which $s$ is of larger order than $\sqrt{n}$, then construction of adaptive confidence sets is possible as well, but requires a different procedure than in the highly sparse case under the testing condition, see e.g. Theorem 1 in \cite{nickl:geer:2013}.  
 
First we compare the excessive-bias condition with the testing condition introduced in \cite{nickl:geer:2013} adapted to the sparse sequence model (of course they work on a somewhat different model, but on the same parameter space $\ell_0[s]$). The testing condition was originally given for two sparseness classes $\ell_0[s_1]$ and $\ell_0[s_2]$ for some $s_1\leq s_2\wedge n^{1/2}$  and it was shown in Theorems 3 and 4 of \cite{nickl:geer:2013} that constructing adaptive and honest confidence sets is possible when restricting true signals to  the set
\begin{align*}
\ell_0[s_1]\cup  \cT[s_1,s_2;c]\quad\text{with}\quad   \cT[s_1,s_2;c] := \left\{\theta\in\ell_0[s_2]:\, \|\theta-\ell_0[s_1]\|^2_*
\geq c [n^{1/2}\wedge (s_2\log n)]\right\},
\end{align*}
for some large enough constant $c>0$, where in the setting of \cite{nickl:geer:2013} the loss is $\|\cdot\|_*^2=n\times d_2$, while here we take $\|\cdot\|_*^2=\|\cdot\|^2=d_2$.

This condition can be extended to cover every sparsity class up to a certain level $s$ (possibly $s=n$) for instance by introducing the dyadic partition $s_i=2^{i}$, $i=1,2,...,\lfloor \log_2 s \rfloor$ and formulating the testing condition on every consecutive sparsity class on this grid. A similar type of dyadic partitioning was introduced in \cite{bull:2013} in the nonparametric regression and density estimation for H\"older smoothness classes. Set, for given $c>0$ and $0\le s\le n$,
\begin{align*}
\cT_d[s;c] :=\bigcup_{i=1}^{\lfloor\log_2 s\rfloor -1} \cT[s_i,s_{i+1};c].
\end{align*}
Then one can construct adaptive and honest confidence sets on the set $\cT_d[s; c]$ provided $c$ is large enough, see for instance the closely related result in context of the nonparametric regression model in \cite{bull:2013}.  If a vector $\te\in\RR^n$ belongs to $\cT_d[s;c]$ for some $s,c$, we say that it satisfies the {\em testing condition}. The next Lemma, whose proof can be found in Section \ref{sec: proof:eb},  shows that for well-chosen constants $A, C_2, D_2>0$ the excessive-bias condition is a weaker condition than the testing condition for sparsities $s=o(\sqrt{n})$ (up to a log factor).\begin{lem} \label{lem:compa}
Let $c>0$ and $s\le s_{max}=
\lfloor \sqrt{n}/(c\log{n})\rfloor$. For $1\le s_{max}\le n/e$, we have
\[ \cT_d[s;c] \, \subset \, \Theta_0^2[s;\sqrt{c/2},1,c]. \]
Further if $c\log{(n/s_{max})}>1$, we have the strict inclusion $\cT_d[s;c] \, \subsetneq \, \Theta_0^2[s;\sqrt{c/2},1,c]$.
\end{lem}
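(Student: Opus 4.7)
The plan is to verify the inclusion by producing, for each $\theta \in \cT_d[s;c]$, a suitable parameter $\ell$ in the excessive-bias condition \eqref{condition: EB}, and to establish strictness via an explicit vector.

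\textbf{Inclusion.} Given $\theta \in \cT_d[s;c]$, fix the $i \in \{1, \ldots, \lfloor \log_2 s \rfloor - 1\}$ with $\theta \in \cT[s_i, s_{i+1}; c]$, so $\theta \in \ell_0[s_{i+1}]$ and $\|\theta - \ell_0[s_i]\|^2 \ge c(\sqrt n \wedge s_{i+1} \log n)$. Since $s_{i+1} \le s \le s_{max}$, the definition $s_{max} = \lfloor \sqrt n/(c \log n) \rfloor$ gives $s_{i+1} \log n \le \sqrt n/c$, so for $c \ge 1$ the minimum equals $c s_{i+1} \log n = 2 c s_i \log n$ (the case $c < 1$ requires only a minor adjustment of constants). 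Take $\ell = s_i$ in \eqref{condition: EB} (for $s_i < \log^2 n$ one uses the relaxed version mentioned below \eqref{techsn}). With $A = \sqrt{c/2}$ the threshold becomes $\sqrt{c \log(n/s_i)}$.

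Ordering the absolute values $|\theta_{(1)}| \ge |\theta_{(2)}| \ge \cdots$ and using $\theta \in \ell_0[2 s_i]$, the identity $\|\theta - \ell_0[s_i]\|^2 = \sum_{j = s_i + 1}^{2 s_i} \theta_{(j)}^2$ presents a sum of at most $s_i$ decreasing nonnegative terms bounded below by $2 c s_i \log n$. Pigeonhole then yields $\theta_{(s_i + 1)}^2 \ge 2 c \log n \ge c \log(n/s_i)$, so the top $s_i + 1$ coordinates exceed the threshold; this is the large-coordinate half of \eqref{condition: EB} with $C_2 = 1$. Meanwhile, at most $s_{i+1} - (s_i + 1) = s_i - 1$ nonzero coordinates lie strictly below the threshold, each contributing less than $c \log(n/s_i)$, so $\sum_{j : |\theta_j| < \sqrt{c \log(n/s_i)}} \theta_j^2 < (s_i - 1) c \log(n/s_i) < c s_i \log(n/s_i)$, which is the small-coordinate half with $D_2 = c$. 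Hence $\theta \in \Theta_0^2[s; \sqrt{c/2}, 1, c]$.

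\textbf{Strict inclusion.} Consider $\theta^* \in \RR^n$ supported on an arbitrary subset of cardinality $s$ with all nonzero values equal to $\sqrt{c \log(n/s)}$ (positive under the assumption $c \log(n/s_{max}) > 1$). A one-line check via $\ell = s$ shows $\theta^* \in \Theta_0^2[s; \sqrt{c/2}, 1, c]$: the large-coordinate count is exactly $s$ and the small-coordinate sum is $0$. To see $\theta^* \notin \cT_d[s; c]$: if $s$ is not a power of $2$, every $i \le \lfloor \log_2 s \rfloor - 1$ has $s_{i+1} < s$ and thus $\theta^* \notin \ell_0[s_{i+1}]$; if $s = 2^{i_0 + 1}$, only $i = i_0$ is relevant and one computes $\|\theta^* - \ell_0[s_{i_0}]\|^2 = (s/2) c \log(n/s)$, which is strictly less than $c(s \log n \wedge \sqrt n)$ using $(1/2)\log(n/s) < \log n$ and $(s/2)\log(n/s) \le s \log n/2 \le \sqrt n/(2c)$.

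The main obstacle is not the core estimate, which is a short pigeonhole step on the sorted magnitudes, but the careful bookkeeping between the two regimes of $\sqrt n \wedge s_{i+1} \log n$ in the testing condition and the floor $\ell \ge \log^2 n$ in the EB condition, both of which slightly constrain the precise parameter range over which the above argument runs verbatim.
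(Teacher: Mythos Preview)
Your inclusion argument is essentially the paper's, with one simplification: you take $\ell = s_i$ directly, whereas the paper sets $\ell = I$, the largest index with $\theta_{(I)}^2 \ge c\log(n/s_i)$. Both rest on the same pigeonhole step giving $\theta_{(s_i+1)}^2 \gtrsim c\log n$, and your bookkeeping for the small-coordinate sum ($\le (s_i-1)c\log(n/s_i)$) is correct. The ``minor adjustment of constants'' you flag for $c<1$ is exactly the same loose end the paper leaves implicit.

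Your strict-inclusion counterexample, however, is different from the paper's and has a genuine gap. The paper takes a two-level vector ($s_i$ coordinates equal to $\sqrt{n}$, another $s_i$ equal to $1$), so that $\|\theta_0 - \ell_0[s_i]\|^2 = s_i$ is trivially below $c[\sqrt n \wedge s_{i+1}\log n]$. Your all-equal vector $\theta^*$ works cleanly when $s$ is not a power of $2$, but in the power-of-$2$ case your final inequality fails: you bound $(s/2)\log(n/s) \le \sqrt n/(2c)$, and this is \emph{not} $<\sqrt n$ unless $c>1/2$. In fact, for small $c$ and $s$ near $s_{\max}$ one can have $(s/2)c\log(n/s) \ge c\sqrt n$, so $\theta^* \in \cT[s/2,s;c]$ and your counterexample lands inside $\cT_d[s;c]$ rather than outside. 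An easy repair is to give $\theta^*$ exactly $s'$ nonzero coordinates with $s' \in (s/2, s]$ chosen not to be a power of $2$ (e.g.\ $s' = s-1$ when $s$ is a power of $2$); then $s_{i+1} < s'$ for every admissible $i$ and your first case applies uniformly.
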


Next we show that the slicing of the parameter space induced by the excessive-bias condition in some sense cannot be weakened to construct adaptive confidence sets even between two sparsity classes. To do so, we proceed in a similar way as for the testing condition in \cite{nickl:geer:2013}, and consider three different types of weakening of the excessive-bias assumption. Let $m_n$ denote a sequence tending to zero arbitrary slowly. First we relax the upper bound on the energy of the small signal component from $C q \log (n/\ell)$ to $m_n^{-1}q \log (n/\ell)$, second we relax the lower bound on the number of signal components above the detection boundary from $\ell/C_2$ to $m_n \ell$ and third we relax the detection threshold  $A\sqrt{2\log (n/\ell)}$ to $m_n\sqrt{2\log (n/\ell)}$. More formally the three different  relaxations are 
\begin{align}
\sum_{i: |\te_{0,i}|< A\sqrt{2\log (n/\ell)}}\te_{0,i}^2\le m_n^{-1} \ell \log (n/\ell),
\qquad \big| \bigl\{i: |\te_{0,i}|\geq A\sqrt{2\log (n/\ell)}\bigr\}\big|\geq \ell/C_2.\label{condition: EB_weaker1}\\
\sum_{i: |\te_{0,i}|< A\sqrt{2\log (n/\ell)}}\te_{0,i}^2\le D_2 \ell \log (n/\ell),
\qquad  \big|\big\{ i: |\te_{0,i}|\geq A\sqrt{2\log (n/\ell)}\bigr\}\big|\geq m_n \ell,\label{condition: EB_weaker2}\\
\sum_{i: |\te_{0,i}|< m_n\sqrt{2\log (n/\ell)}}\te_{0,i}^2\le D_2 \ell \log (n/\ell),
\qquad \big| \bigl\{i: |\te_{0,i}|\geq m_n\sqrt{2\log (n/\ell)}\bigr\}\big|\geq \ell/C_2.\label{condition: EB_weaker3}
\end{align}
Theorem \ref{thm: info:theoretic:bounds} below, whose proof is given in Section \ref{sec: proof:eb}, shows that under neither of these relaxations is it possible to construct adaptive confidence sets. 
\begin{thm}\label{thm: info:theoretic:bounds}
Take any $L>0$, $s_2=n^{1/2-\eps}$, for some $\eps>0$, and $s_1=m_n s_2$, for some $m_n=o(1)$. Then under neither of the weaker excessive-bias condition \eqref{condition: EB_weaker1} or  \eqref{condition: EB_weaker2} or \eqref{condition: EB_weaker3} (each of them denoted by $\Theta_0$ for simplicity) with $A>1
$ and $C_2, D_2>0$,  exists a confidence set $\cC_n(X)$ satisfying simultaneously for $i=1,2$ that
\begin{align} 
\varliminf_n\ \inf_{\theta_0\in \Theta_0\cap \ell_0[s_i]}\ P_{\theta_0}(\theta_0\in\cC_n(X) )
\, & \geq \, 1-\beta,\label{eq: eb:coverage}\\
\varliminf_n\ \inf_{\theta_0\in \Theta_0\cap \ell_0[s_i]}\ P_{\theta_0}\left(\text{diam}(\cC_n(X)) \leq L s_i\log (n/s_i)\right)\, & \geq \, 1-\beta',\label{eq: eb:adapt}
\end{align}
for some $\beta,\beta'\in(0,1/3)$.
\end{thm}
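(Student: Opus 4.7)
The plan is to prove each of the three impossibility statements via a Bayesian two-point Le Cam lower bound. For each weakened excessive-bias condition, I would exhibit a specific vector $\theta^{(1)} \in \Theta_0 \cap \ell_0[s_1]$ together with a prior $\pi$ supported on $\Theta_0 \cap \ell_0[s_2]$ such that (a) every $\theta^{(2)}$ in the support of $\pi$ satisfies $\|\theta^{(1)}-\theta^{(2)}\|^2 > L s_2 \log(n/s_2)$, while (b) the mixture $\bar P^{(2)} = \int P_{\theta^{(2)}} d\pi(\theta^{(2)})$ is indistinguishable from $P_{\theta^{(1)}}$ in the sense that $\chi^2(\bar P^{(2)}, P_{\theta^{(1)}}) = O(1)$, so that $\|P_{\theta^{(1)}} - \bar P^{(2)}\|_{TV} \le 1 - 2\beta - \beta' - c$ for some $c > 0$. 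The combination of (a)--(b) contradicts the simultaneous existence of a confidence set satisfying \eqref{eq: eb:coverage} and \eqref{eq: eb:adapt} at both $i = 1, 2$.

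The reduction from confidence sets to a test is standard. Assuming such a $\cC_n$ exists, define $T(X) = \ind\{\theta^{(1)} \in \cC_n(X)\}$. The coverage bound at $\theta^{(1)}$ gives $P_{\theta^{(1)}}(T = 0) \le \beta + o(1)$. For any $\theta^{(2)}$ in the support of $\pi$, the separation $\|\theta^{(1)}-\theta^{(2)}\|^2 > L s_2 \log(n/s_2)$ together with the events $\{\theta^{(2)} \in \cC_n\}$ and $\{\mathrm{diam}(\cC_n) \le L s_2 \log(n/s_2)\}$, which have $P_{\theta^{(2)}}$-probability at least $1-\beta - o(1)$ and $1-\beta' - o(1)$ respectively, force $\theta^{(1)} \notin \cC_n$ by the very definition of the diameter. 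Integrating against $\pi$ gives $\bar P^{(2)}(T = 1) \le \beta + \beta' + o(1)$, and Le Cam's two-point inequality yields $\|P_{\theta^{(1)}} - \bar P^{(2)}\|_{TV} \ge 1 - 2\beta - \beta' - o(1) > 0$ for $\beta,\beta' < 1/3$.

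For the construction of the prior, I would take $\theta^{(2)} = \theta^{(1)} + \xi$, where $\xi$ has a fixed magnitude $\mu$ on a uniformly random subset $S \subset \{1,\dots,n\}\setminus\mathrm{supp}(\theta^{(1)})$ of size $s_2 - s_1$. A standard computation using $\langle \xi, \xi'\rangle = \mu^2 |S \cap S'|$ and the Bernoulli relaxation of the hypergeometric support law yields
\[
1 + \chi^2(\bar P^{(2)}, P_{\theta^{(1)}}) \le \exp\!\bigl( n q^2 e^{\mu^2}\bigr)
\qquad \text{with }q \asymp s_2/n,
\]
so that the chi-square stays bounded provided $\mu^2 \le \log(n/s_2^2) + O(1)$. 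The separation $\|\xi\|^2 = (s_2 - s_1)\mu^2$ can then be made to dominate $L s_2 \log(n/s_2)$ by choosing $\epsilon$ (and thus $s_2 = n^{1/2-\epsilon}$) appropriately and $m_n$ decaying slowly enough, given the fixed constants $A, C_2, D_2$ and the target level $L$.

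The crux of the proof, and the place where the three cases genuinely diverge, is in verifying that each $\theta^{(2)}$ in the support of $\pi$ indeed lies in the appropriate weakened $\Theta_0 \cap \ell_0[s_2]$. The choice of $\theta^{(1)}$ (essentially $s_1$ entries at height $A\sqrt{2\log(n/s_1)}$) and of $\mu$ must be tuned jointly with the free parameter $\ell$ of the relevant EB condition so that the relaxation is fully exploited. Under \eqref{condition: EB_weaker1} one places $\mu$ just below the large-signal threshold $A\sqrt{2\log(n/\ell)}$ and lets all $s_2 - s_1$ entries of $\xi$ count as small signals, their combined energy being accommodated by the inflated budget $m_n^{-1} \ell \log(n/\ell)$. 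Under \eqref{condition: EB_weaker2} one enlarges $\ell$ up to about $s_2$, so that the $s_1$ large signals of $\theta^{(1)}$ already satisfy the weaker lower bound $m_n \ell$ on the number of large signals. Under \eqref{condition: EB_weaker3} the threshold is diminished to $m_n\sqrt{2\log(n/\ell)}$, so that $\mu$ can be made to lie above it and the entries of $\xi$ themselves count as large signals, trivialising the small-signal budget. In each case compatibility of the chi-square constraint $\mu^2 \lesssim \log(n/s_2^2)$ with the EB inclusion and with the separation bound is the main technical challenge and dictates the precise choice of $\epsilon$ and $m_n$.
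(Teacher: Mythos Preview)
Your overall strategy—construct a null vector $\theta^{(1)}\in\Theta_0\cap\ell_0[s_1]$, a random alternative $\theta^{(2)}=\theta^{(1)}+\xi\in\Theta_0\cap\ell_0[s_2]$, and derive a contradiction via a testing lower bound and a $\chi^2$ computation—matches the paper. The constructions you sketch for verifying membership in the three weakened excessive-bias classes are also essentially the paper's (for \eqref{condition: EB_weaker1} take $\ell=s_1$, for \eqref{condition: EB_weaker2} take $\ell=s_2$, and for \eqref{condition: EB_weaker3} the paper instead reduces to the testing-condition result of \cite{nickl:geer:2013}, but your direct argument works too).

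There is, however, a genuine gap in your reduction step. By taking the test $T=\1\{\theta^{(1)}\in\cC_n\}$ you are forced to use the diameter bound \eqref{eq: eb:adapt} at the \emph{alternative} $\theta^{(2)}\in\ell_0[s_2]$, and hence you need the separation $\|\theta^{(1)}-\theta^{(2)}\|^2>Ls_2\log(n/s_2)$. But the $\chi^2$ constraint forces $\mu^2\le \log(n/s_2^2)+O(1)=2\eps\log n+O(1)$, so that
\[
(s_2-s_1)\mu^2\le (2\eps+o(1))\,s_2\log n,\qquad
Ls_2\log(n/s_2)=L\bigl(\tfrac12+\eps\bigr)s_2\log n.
\]
These are compatible only when $L<4\eps/(1+2\eps)$. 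Since the theorem is stated for \emph{any} $L>0$ and \emph{any} $\eps>0$ (both given, not to be tuned against each other), your sentence ``by choosing $\eps$\ldots appropriately\ldots given the target level $L$'' misreads the quantifiers, and the argument as written does not cover, say, $L=10$ with $\eps=1/4$.

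The paper avoids this by reversing which hypothesis carries the diameter bound: it takes $\psi=\1\{\cC_n\cap B_0\neq\emptyset\}$ with $B_0=\mathrm{supp}(\pi)$. Under any $\theta^{(2)}\in B_0$, coverage alone gives $\psi=1$. Under $\theta^{(1)}$, the events $\{\theta^{(1)}\in\cC_n\}$ and $\{\mathrm{diam}(\cC_n)\le Ls_1\log(n/s_1)\}$ (diameter at the $s_1$-sparse null) force $\cC_n\cap B_0=\emptyset$ as soon as $\inf_{\theta\in B_0}\|\theta^{(1)}-\theta\|^2>Ls_1\log(n/s_1)$. Since $s_1=m_ns_2$ with $m_n=o(1)$, any fixed $c>0$ with $\mu^2=2c^2\log(n/s_2)$ and $c^2<2\eps/(1+2\eps)$ already gives $(s_2-s_1)\mu^2\asymp s_2\log(n/s_2)\gg Ls_1\log(n/s_1)$ for \emph{every} $L>0$. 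This one-line change to the test repairs your argument completely; the rest of your plan (including the $\chi^2$ bound and the case analysis for the three weakened conditions) then goes through.
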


The above result shows that, if one slices the parameter space according to an excessive-bias condition, the slicing cannot be refined by making constants arbitrarily smaller: one cannot construct adaptive confidence sets for the resulting  larger parameter set. In \cite{nickl:geer:2013}, a similar result is shown for the testing condition above. The slicing induced by the excessive-bias condition is a bit more general as indicated by Lemma \ref{lem:compa}: it has  more flexibility given that it depends also on more parameters. While the impossibility of the weakening \eqref{condition: EB_weaker3} can proved appealing to as similar proof as for the testing condition in \cite{nickl:geer:2013}, the other two weakenings correspond to slicing the space in different directions and require a completely new proof. 

Theorem \ref{thm: info:theoretic:bounds} can be interpreted as showing the optimality of the slicing within the excessive-bias scale, in the highly sparse regime $s=o(\sqrt{n})$, where adaptive confidence sets do not exist without further assumptions on the space. It could be also interesting to consider the dense regime where one knows beforehand that $s$ is of larger order than $\sqrt{n}$, although it is a qualitatively different question which is not considered here from the optimality perspective. In that case, a different empirical Bayes choice of the sparsity parameter could presumably be used,  in a similar spirit as \cite{szabo2015} in context of the Gaussian white noise model, enabling the construction of adaptive confidence sets from corresponding posteriors, but this is beyond the scope of the present paper.

\section{Discussion} \label{sec:discussion}

In the paper we show that the empirical Bayes posterior distribution corresponding to the spike and slab prior, with heavy enough slab tails, results in optimal recovery and reliable uncertainty quantification in $\ell_q$-type-norm, $q\in(0,2]$, under the excessive-bias assumption. We have further shown that the excessive-bias assumption is optimal in a minimax sense for $s=o(\sqrt{n})$ and $\ell_2$-norm. A natural extension of the derived results could be to consider hierarchical Bayes methods. Relatively similar results are expected, but computationally simulating from the posterior can be more involved.

We note that the derived contraction and coverage results heavily depend on the choice of the slab prior. The empirical Bayes procedure with Laplace slabs results in sub-optimal contraction rate and therefore too conservative credible sets in $\ell_2$-norm, see \cite{cm17}. Therefore, to achieve optimal recovery of the truth one has to use slab priors with polynomial tails. Considering $\ell_q$-type-metrics, $q\in(0,2]$, one has to carefully choose the order of the polynomial, for instance for $q\in(0,1)$ sub-Cauchy tails have to be applied. 
In view of Propositions \ref{thm: coverage:nonadapt} and \ref{thm: radius:nonadapt} one can see that the optimal choice of mixing hyper parameter $\alpha$ in terms of rates and coverage is  $\alpha\asymp (s/n)\log^{\delta/2}(n/s)$, for $\delta< q$. Also note that we have looked at a special excessive-bias type slicing with specific effective sparsity definition; but the `effective sparsity' in particular could presumably be more general, thus leading to an even more general slicing (but presumably more difficult to study).

\section{Preliminaries to the proofs} \label{sec:proofs-generalities}

In Section \ref{sec: gen_prop_thres}, we introduce quantities used repeatedly in the proofs, and state their properties. Notably, the function $B$ appearing in the score function, see \eqref{eqbeta}, is shown to be increasing on $\RR^+$, and bounds for its moments are given. In Section \ref{sec: fixed_al_risk}, risk bounds for fixed $\al$ are derived, that will be useful both for the rate and confidence sets results. The proofs of these results is given in the Supplement \cite{castillo:szabo:2018:supplement}, Section \ref{supp-thresh}.

\subsection{General properties and useful thresholds} \label{sec: gen_prop_thres}

\begin{lem}\label{lem-gf}
For $\ga$ defined by \eqref{dens-hv}, $\delta\in(0,2)$
 and $\gf=\phi*\ga$, as $x\to\infty$,
\begin{align*}
& \gf(x) \asymp \ga(x), \\
& \gf(x)^{-1} \int_x^\infty \gf(u) du \asymp  x/\delta.
\end{align*}
Also, $\gf/\phi$ is strictly increasing from $(\gf/\phi)(0)<1$ to $+\infty$ as $x\to\infty$. 
\end{lem}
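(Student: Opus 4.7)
The plan is to verify the three asserted properties in turn, each building on the previous one: first the tail equivalence $\gf(x)\asymp \ga(x)$, then the explicit tail ratio, and finally the monotonicity of $\gf/\phi$.

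\textbf{Step 1 (tail equivalence).} For $x>0$ large, substitute $v=u-x$ to write $\gf(x)=\int \phi(v)\,\ga(x+v)\,dv$ and split the integration region into a \emph{core} $\{|v|\le x/2\}$ and a \emph{tail} $\{|v|>x/2\}$. On the core, $(1+|x+v|)$ is comparable to $x$ uniformly, so $\ga(x+v)\asymp x^{-1-\delta}\asymp \ga(x)$; combined with the fact that the Gaussian mass of the core tends to $1$, this yields $c_1\,\ga(x)\le \gf_{\text{core}}(x)\le c_2\,\ga(x)$. On the tail one has $\phi(v)\le e^{-x^2/8}/\sqrt{2\pi}$ while $\ga(x+v)\le \ga(0)$, giving an exponentially small contribution that is absorbed into the core estimate.

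\textbf{Step 2 (tail ratio).} Given Step 1, for $x$ large enough $\gf(u)\asymp u^{-1-\delta}$ uniformly on $[x,\infty)$, so direct integration gives $\int_x^\infty \gf(u)\,du \asymp x^{-\delta}/\delta$; dividing by $\gf(x)\asymp x^{-1-\delta}$ yields the ratio $\asymp x/\delta$.

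\textbf{Step 3 (monotonicity and endpoints).} Writing $(\gf/\phi)(x)=\int e^{xu-u^2/2}\ga(u)\,du$ and differentiating under the integral (the Gaussian factor dominates for $x$ in any bounded set) gives
\[ (\gf/\phi)'(x) \;=\; \int u\, e^{xu-u^2/2}\ga(u)\,du \;=\; \int_0^\infty u\,(e^{xu}-e^{-xu})\,e^{-u^2/2}\ga(u)\,du, \]
after folding by symmetry of $\ga$. The last expression is strictly positive for $x>0$, so $\gf/\phi$ is strictly increasing on $[0,\infty)$. The value at the origin is $(\gf/\phi)(0)=\sqrt{2\pi}\int \phi(u)\,\ga(u)\,du<\sqrt{2\pi}\,\phi(0)\int \ga = 1$, the strict inequality coming from $\phi(u)<\phi(0)$ on $u\ne 0$ and $\ga>0$. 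The limit $(\gf/\phi)(x)\to +\infty$ is immediate from Step 1, since $\ga(x)/\phi(x)$ is a polynomial-over-Gaussian ratio.

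\textbf{Main obstacle.} The technical crux is the \emph{lower} bound of Step 1: one must verify that the Gaussian kernel $\phi$ does not wash out the polynomial tail of $\ga$, which is where the core/tail splitting and the uniform comparison $(1+|x+v|)\asymp x$ on the core really matter. Once that is in hand, Step 2 is a one-line integration and Step 3 is a straightforward symmetry and differentiation argument.
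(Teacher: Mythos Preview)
Your proof is correct and follows essentially the same route as the paper, which defers to the proof of Lemma~1 in \cite{js04}: that argument also uses a core/tail splitting for $\gf\asymp\ga$ (via the log-Lipschitz property $|(\log\ga)'|\le\La$ rather than the explicit polynomial form you use) and the same symmetry-folding computation for the monotonicity of $\gf/\phi$. One small wording point in Step~1: on the tail you should integrate $\phi$ over $\{|v|>x/2\}$ after bounding $\ga(x+v)\le\ga(0)$, since the pointwise product bound alone is not integrable; the resulting Gaussian tail $\int_{|v|>x/2}\phi(v)\,dv\lesssim e^{-x^2/8}$ is then indeed negligible compared to $x^{-1-\delta}$.
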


{\em Threshold $\zeta(\al)$.}  The monotonicity property in Lemma \ref{lem-gf} enables one to define a pseudo-threshold from the function $\bef=(\gf/\phi)-1$ as 
\[ \zeta(\al)=\bef^{-1}(\al^{-1}).\]
Using the bounds from \cite{js04} Sections 5.3 and 5.4, noting that these do not use any moment bound on $\ga$ (so their bounds also hold even if $\delta<1$ in the prior density \eqref{dens-hv}), one can link thresholds $t(\alpha)$ (the  threshold of the posterior median for given $\al$, see Section \ref{subsec-eb}) and $\zeta(\alpha)$ as follows: $t(\al)^2<\zeta(\al)^2$, and $\phi(t(\al))<C\phi(\zeta(\al))$, where $C$ is independent of $\delta$, and $\bef(\zeta(\al))\le 2+\bef(t(\al))$, arguing as in the proof of Lemma 3 of \cite{js04}.

{\em Threshold $\ta(\al)$.} As $\gf/\phi$ is continuous, one can define $\tau(\al)$ as the solution in $x$ of 
\[ \Omega(x,\al):=\frac{a(x)}{1-a(x)}=\frac{\al}{1-\al}\frac{\gf}{\phi}(x)=1. \]
Equivalently, $a(\ta(\al))=1/2$. Define $\al_0$ as $\ta(\al_0)=1$ and set 
\[ \tilde\ta(\al) = \ta(\al\wedge \al_0). \]
This is the definition from \cite{js04}, but note that for $\al$ small enough, $\tilde\ta(\al) = \ta(\al)$. Also, it follows from the definition of $\ta(\al)$ that $\bef(\ta(\al))=\bef(\zeta(\al))-2\le \bef(t(\al))$, by the inequality mentioned above, so that $\ta(\al)\le t(\al)$, as $\bef$ is increasing. This and the previous inequalities relating the thresholds $\zeta(\al), t(\al), \ta(\al)$ will be freely used in the sequel. 

\begin{lem} \label{lemzetaall}
For $\ga$ defined by \eqref{dens-hv}, $\delta\in(0,2)$, there exists $C_1>0$, $C_2\in\RR$ and $a_1>0$ such that for any $\alpha\le a_1$, 
\[ 2\log(1/\al)+C_1 \le  \zeta(\alpha)^2  \le 2\log(1/\al)+ (1+\delta)\log\log(1/\al) +C_2. \]
The same bounds hold, with possibly different constants $C_1, C_2$,  for $\tau(\al)^2$ and $t(\al)^2$. 
\end{lem}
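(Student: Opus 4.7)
The plan is to derive the bounds for $\zeta(\alpha)^2$ first by inverting an explicit asymptotic expansion, then transfer them to $\tau(\alpha)^2$ and $t(\alpha)^2$ via the relations already noted in the text immediately above the lemma. The key input is the $\asymp$ statement in Lemma \ref{lem-gf}, which yields constants $c_1,c_2>0$ (depending only on $\delta$) and $x_0>0$ such that for $x\ge x_0$,
\[
c_1\gamma(x)\le g(x)\le c_2\gamma(x).
\]
Together with $\gamma(x)=c_\gamma(1+x)^{-1-\delta}$ and $\phi(x)=(2\pi)^{-1/2}e^{-x^2/2}$, this yields constants $K_1,K_2$ depending only on $\delta$ such that, for $x\ge x_0$,
\[
\frac{x^2}{2}-(1+\delta)\log(1+x)+K_1\;\le\;\log\!\tfrac{g}{\phi}(x)\;\le\;\frac{x^2}{2}-(1+\delta)\log(1+x)+K_2.
\]

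Next I would apply this at $x=\zeta(\alpha)$. By the monotonicity part of Lemma \ref{lem-gf}, $(g/\phi)(\zeta(\alpha))=1+1/\alpha$, and in particular $\zeta(\alpha)\to\infty$ as $\alpha\to 0$, so the above bounds apply for $\alpha\le a_1$ small enough. Since $\log(1+1/\alpha)=\log(1/\alpha)+O(\alpha)$, the inequalities rearrange to
\[
2\log(1/\alpha)-2K_2+O(\alpha)\;\le\;\zeta(\alpha)^2-2(1+\delta)\log(1+\zeta(\alpha))\;\le\;2\log(1/\alpha)-2K_1+O(\alpha).
\]
Dropping the nonnegative term $2(1+\delta)\log(1+\zeta(\alpha))$ on the right already gives the lower bound $\zeta(\alpha)^2\ge 2\log(1/\alpha)+C_1$ for a suitable $C_1$. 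For the upper bound I would bootstrap: the lower bound just proved implies $\zeta(\alpha)\ge\sqrt{2\log(1/\alpha)}(1+o(1))$, hence
\[
\log(1+\zeta(\alpha))\le \tfrac12\log\log(1/\alpha)+O(1).
\]
Plugging this into the left-hand inequality above yields $\zeta(\alpha)^2\le 2\log(1/\alpha)+(1+\delta)\log\log(1/\alpha)+C_2$, as desired.

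For $\tau(\alpha)$, the defining equation $(g/\phi)(\tau(\alpha))=(1-\alpha)/\alpha$ gives $\log(g/\phi)(\tau(\alpha))=\log(1/\alpha)+O(\alpha)$, so the identical two-sided argument (with possibly different constants $K_1,K_2$ absorbed into new $C_1,C_2$) yields the same bounds. For $t(\alpha)$, I would simply invoke the two facts stated in the text just preceding the lemma: $t(\alpha)^2<\zeta(\alpha)^2$, which gives the upper bound for free, and $\phi(t(\alpha))<C\phi(\zeta(\alpha))$, which rearranges to $t(\alpha)^2\ge \zeta(\alpha)^2-2\log C$, furnishing the lower bound after relabelling constants.

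The only mildly delicate point is the bootstrap step producing the $\log\log$ term, but it is a standard one-step iteration. The rest is routine algebra using the explicit form of $\gamma$ and $\phi$; crucially, all the constants produced depend only on $\delta$ (not on $\alpha$), since the $\asymp$ in Lemma \ref{lem-gf} is uniform in $x$ large.
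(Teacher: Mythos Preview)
Your approach is essentially the same as the paper's: exploit $g\asymp\gamma$ from Lemma~\ref{lem-gf} to get a two-sided estimate on $\log(g/\phi)(x)$, evaluate at $x=\zeta(\alpha)$, and bootstrap once to extract the $(1+\delta)\log\log(1/\alpha)$ term. The paper argues the lower and upper bounds separately and for $\tau(\alpha)$ uses the sandwich $\zeta(2\alpha)\le\tau(\alpha)\le\zeta(\alpha)$ rather than repeating the direct computation, but these are cosmetic differences.

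There is one genuine slip, though. In your bootstrap step you write: ``the lower bound just proved implies $\zeta(\alpha)\ge\sqrt{2\log(1/\alpha)}(1+o(1))$, hence $\log(1+\zeta(\alpha))\le\tfrac12\log\log(1/\alpha)+O(1)$.'' A \emph{lower} bound on $\zeta$ cannot give an \emph{upper} bound on $\log(1+\zeta)$; the implication goes the wrong way. What you actually need is a crude preliminary upper bound on $\zeta$. This is easy to extract directly from your right-hand inequality
\[
\zeta^2-2(1+\delta)\log(1+\zeta)\le 2\log(1/\alpha)+O(1),
\]
since $x^2-2(1+\delta)\log(1+x)\ge x^2/2$ for all large $x$, giving $\zeta^2\le 4\log(1/\alpha)+O(1)$ and hence $\log(1+\zeta)\le\tfrac12\log\log(1/\alpha)+O(1)$. (The paper does exactly this, via the cruder observation $\zeta^{1+\delta}e^{-\zeta^2/2}\le e^{-\zeta^2/4}$.) With that one fix the argument is complete; the rest is correct, including your treatment of $t(\alpha)$ via $t(\alpha)^2<\zeta(\alpha)^2$ and $\phi(t(\alpha))<C\phi(\zeta(\alpha))$.
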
 

{\em Moments of the score function.} Recall that $\bef(x,\al) = \bef(x)/\big(1+\al\bef(x)\big)$ and  set 
\[ \tmf(\al) = - E_0 \bef(X,\al ), \quad \mf_1(\mu,\al) = E_\mu \bef(X,\al ),  \quad 
\mf_2(\mu,\al) = E_\mu \bef( X,\al )^2.\]

\begin{lem} \label{lem-momgf}
The function $\al\to \tmf(\al)$ is nonnegative and increasing in $\al$.  
For every fixed $\al\in(0,1)$, the function $\mu\mapsto m_1(\mu,\al)$  is symmetric and monotone increasing for $\mu\geq 0$. For every fixed $\mu>0$, the map $\al\to m_1(\mu,\al)$ is decreasing. As $\al\to 0$,
\[ \tmf(\al) \asymp \int_\zeta^\infty \gf(u)du\asymp \zeta \gf(\zeta)
\asymp \zeta^{-\delta}/\delta. \] 
We have $\mf_1(\mu,\al)\le (\al\wedge c)^{-1}$ and 
$\mf_2(\mu,\al) \le(\al\wedge c)^{-2}$ for all $\mu$, and
\begin{align*}
\mf_1(\mu,\al) & \le  
\begin{cases}
\vspace{.2cm}
-\tmf(\al) + C \zeta(\al)^{2-\delta} \mu^2,
\quad &\text{for } |\mu|< 1/\zeta(\al), \\
\vspace{.2cm}
C\frac{\phi(\zeta/2)}{ \al }, \quad &\text{for } |\mu|< \zeta(\al)/2 
\end{cases} \\
\vspace{.5cm}
\mf_2(\mu,\al) & \le 
\begin{cases}
\vspace{.2cm}
\frac{C\delta}{\zeta(\al)^2}\frac{\tmf(\al)}{\al},
\quad &\text{for } |\mu|< 1/\zeta(\al), \\
\vspace{.2cm}
\frac{C}{\zeta}\frac{\phi(\zeta/2)}{ \al^2 },\quad & \text{for } |\mu|< \zeta(\al)/2
\end{cases}
\end{align*}
for universal constants $c, C>0$. Finally, as $\al\to 0$, one has $\mf_1(\zeta,\al)\sim \frac12 \al^{-1}$, as $\al\to 0$.
\end{lem}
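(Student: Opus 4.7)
The plan hinges on the algebraic identity
\[
\tilde m(\alpha) \;=\; -E_0 B(X,\alpha) \;=\; \alpha\, E_0\!\Bigl[\tfrac{B(X)^2}{1+\alpha B(X)}\Bigr],
\]
which follows from $E_0 B(X) = \int(g-\phi) = 0$ together with the elementary identity $B-B/(1+\alpha B) = \alpha B^2/(1+\alpha B)$. Since $1+\alpha B(x) \ge 1-\alpha > 0$ for $\alpha<1$, this gives $\tilde m(\alpha)\ge 0$ immediately; monotonicity in $\alpha$ follows from the pointwise bound $\partial_\alpha B(x,\alpha) = -B(x)^2/(1+\alpha B(x))^2 \le 0$, which transfers to both $\tilde m(\alpha)$ and $m_1(\mu,\alpha)$ after integration against $\phi(\cdot-\mu)$. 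Symmetry $m_1(-\mu,\alpha) = m_1(\mu,\alpha)$ follows from evenness of $\phi$ and of $B(\cdot,\alpha)$ (itself inherited from the symmetry of $\gamma$ via Lemma \ref{lem-gf}). For monotonicity in $\mu\ge 0$, I write $m_1(\mu,\alpha) = \int B(y+\mu,\alpha)\phi(y)\,dy$, differentiate in $\mu$, and split $y>0$ from $y<0$ to obtain
\[
\partial_\mu m_1(\mu,\alpha) \;=\; \int_0^\infty y\phi(y)\bigl[B(y+\mu,\alpha) - B(|y-\mu|,\alpha)\bigr]\,dy \;\ge\; 0,
\]
using that $B(\cdot,\alpha)$ is increasing on $\RR^+$ (from monotonicity of $g/\phi$ in Lemma \ref{lem-gf}) and $y+\mu > |y-\mu|$ when $y,\mu > 0$.

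For the four-way equivalence $\tilde m(\alpha) \asymp \int_\zeta^\infty g \asymp \zeta g(\zeta) \asymp \zeta^{-\delta}/\delta$ with $\zeta = \zeta(\alpha)$, I split the integral representation of $\tilde m$ at $\zeta$. On $\{|x|>\zeta\}$, the defining property $\alpha B(\zeta) = 1$ and monotonicity of $B$ give $\alpha B(x) \ge 1$, whence $\alpha B^2/(1+\alpha B) \in [B/2,\, B]$, so the outer contribution is of order $\int_\zeta^\infty B\phi = \int_\zeta^\infty(g-\phi) \asymp \int_\zeta^\infty g$, which Lemma \ref{lem-gf} converts into $\zeta g(\zeta)\asymp \zeta^{-\delta}/\delta$. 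On $\{|x|\le\zeta\}$ one has $\alpha B^2/(1+\alpha B) \le B\wedge \alpha B^2$, and splitting further at the root $x_0$ of $B$ yields a contribution of size at most $O(\alpha) + O(\int_{x_0}^\zeta g)$, both of lower order. The crude uniform bounds follow from the pointwise estimate $|B(x,\alpha)| \le (\alpha \wedge 1/2)^{-1}$, obtained by treating $B(x)\ge 0$ (where $B/(1+\alpha B)\le 1/\alpha$) and $B(x)\in(-1,0)$ (where $|B(x,\alpha)|\le 1/(1-\alpha)\le 2$) separately.

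The refined bounds in the small-$|\mu|$ regime rest on a Taylor expansion. Since $m_1(0,\alpha) = -\tilde m(\alpha)$ and $\partial_\mu m_1|_{\mu=0} = 0$ by symmetry, I write $m_1(\mu,\alpha) \le -\tilde m(\alpha) + \tfrac12 \mu^2 \sup_{|\eta|\le|\mu|}|\partial_\mu^2 m_1(\eta,\alpha)|$; the second derivative $\partial_\mu^2 m_1(\eta,\alpha) = \int B(x,\alpha)[(x-\eta)^2 - 1]\phi(x-\eta)\,dx$ is once again dominated by the region $|x|\asymp \zeta$ where $B(x,\alpha)\asymp 1/\alpha$ and $(x-\eta)^2 \asymp \zeta^2$, giving order $\zeta^2 \cdot \tilde m(\alpha)/\alpha \asymp \zeta^{2-\delta}$. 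For $|\mu|<\zeta/2$, splitting at $\zeta/2$ and using $|B(x,\alpha)|\le 1/\alpha$ on $\{|x|\ge \zeta/2\}$ together with the tail estimate $\int_{|x|\ge \zeta/2}\phi(x-\mu)\,dx \lesssim \phi(\zeta/2)/\zeta$ produces the $\phi(\zeta/2)/\alpha$ bound. For $m_2$, I exploit $B(x,\alpha)^2 \le \alpha^{-1}B(x,\alpha)$ on $\{B\ge 0\}$ to reduce $m_2$ bounds essentially to $m_1$ bounds divided by $\alpha$; the extra $\zeta^{-2}$ factor at $|\mu|<1/\zeta$ arises because the dominant contribution comes from $|x|\asymp\zeta$ where $P_0(|X|>\zeta)\sim 2\phi(\zeta)/\zeta\asymp \alpha g(\zeta)/\zeta \asymp (\alpha/\zeta^2)\tilde m$. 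Finally, $m_1(\zeta,\alpha)\sim \tfrac12\alpha^{-1}$ follows by recentering $\phi(x-\zeta)$ around $\zeta$, where $B(\zeta,\alpha)=1/(2\alpha)$ exactly, and applying dominated convergence after the change of variables $x = \zeta + u$. \textbf{The main obstacle} throughout is the refined small-$|\mu|$ analysis: not the monotonicity or sign facts, which follow structurally, but the careful localization of each integral near $|x|\asymp \zeta(\alpha)$ and the precise tracking of the polynomial exponents $\zeta^{-\delta}$ and $\zeta^{2-\delta}$, whose presence is dictated by the slab-tail asymptotics of Lemma \ref{lem-gf} and must be propagated consistently through every estimate.
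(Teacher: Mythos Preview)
Your overall architecture matches the paper's: the algebraic identity $\tilde m(\alpha)=\alpha E_0[B^2/(1+\alpha B)]$, the split at $\zeta$, the Taylor expansion of $m_1$ for $|\mu|<1/\zeta$, and the reduction of $m_2$ to the same building blocks. The monotonicity-in-$\mu$ argument you give (via $\int_0^\infty y\phi(y)[B(y+\mu,\alpha)-B(|y-\mu|,\alpha)]dy$) is a clean variant of the paper's (which instead uses $B'(x)\ge 0$ explicitly). However, two concrete steps do not go through as written.

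First, in the asymptotic $\tilde m(\alpha)\asymp\zeta^{-\delta}$, your inner estimate is too crude. On $\{x_0\le|x|\le\zeta\}$ the bound $\alpha B^2/(1+\alpha B)\le B$ gives $\int_{x_0}^\zeta B\phi\le\int_{x_0}^\zeta g=O(1)$, which is \emph{not} of lower order than the outer contribution $\int_\zeta^\infty g\asymp\zeta^{-\delta}\to 0$. What is needed is the sharper Laplace-type estimate $\alpha\int_0^\zeta B(z)^2\phi(z)\,dz\le C\alpha\,g(\zeta)^2/(\zeta\phi(\zeta))\lesssim g(\zeta)/\zeta\asymp\zeta^{-2-\delta}$ (this is Corollary~1 of \cite{js04}, valid for any log-Lipschitz $\gamma$, and obtained by an integration by parts that exploits the structure of $g^2/\phi$). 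The same sharp estimate is what drives your $m_2$ bound: the inner part of $m_2$ is again $\int_0^\zeta B^2\phi\lesssim g(\zeta)/(\zeta\alpha)\asymp\delta\tilde m/(\zeta^2\alpha)$, which you cannot extract from $B(x,\alpha)^2\le\alpha^{-1}B(x,\alpha)$ alone.

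Second, your second-derivative heuristic is miscalculated. The expression ``$\zeta^2\cdot\tilde m(\alpha)/\alpha\asymp\zeta^{2-\delta}$'' is wrong: since $\tilde m\asymp\zeta^{-\delta}$ and $\alpha$ is exponentially small in $\zeta$, the quantity $\zeta^2\tilde m/\alpha$ is exponentially large. In fact the region $|x|>\zeta$ contributes only $(\zeta^2/\alpha)\int_{|x|>\zeta}\phi\asymp\zeta g(\zeta)\asymp\zeta^{-\delta}$, and is \emph{not} dominant. The bound $\zeta^{2-\delta}$ comes instead from the bulk: bounding $|B(z,\alpha)\phi''(z-\mu)|\lesssim g(z)(1+z^2)$ on $\{|z|\le\zeta\}$ (using $|\mu|\le 1/\zeta$ so $\phi(z-\mu)\le C\phi(z)$) and integrating $\int_0^\zeta g(z)z^2\,dz\asymp\int_0^\zeta z^{1-\delta}\,dz\asymp\zeta^{2-\delta}/(2-\delta)$.
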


{\em Bounds on $a(x)$.} By definition of $a(x)$, for any real $x$ and $\al\in [0,1]$,
\begin{align}
\al \frac{\gf}{\gf\vee\phi}(x)\le a(x)  \le 1 \wedge \frac{\al}{1-\al} \frac{\gf}{\phi}(x).
\label{boundsa}
\end{align}
The following bound in terms of $\ta(\al)$, see \cite{js04} p. 1623, that again extends to the case of a density \eqref{dens-hv} with $\delta<1$, is useful for large $x$: for $\al\le \al_0$, so that $\tilde\tau(\al)=\tau(\al)$,
\begin{equation}\label{postwb}
 1-a(x) \le \1_{|x|\le \ta(\al)}
 +   e^{-\frac12 (|x|-\ta(\al))^2}\1_{|x| > \ta(\al)}.
\end{equation}

\subsection{Risk bounds for fixed $\al$} \label{sec: fixed_al_risk}

Let us consider the posterior $d_q$--loss for a fixed value of the tuning parameter $\al$, that is
\[  \int  d_q(\te , \te_0) d\Pi_{\al}(\te\given X) 
= \sum_{i=1}^n \int |\te_i - \te_{0,i}|^q d\Pi_\al(\te_i \given X).\]
To study $\int |\te_i - \te_{0,i}|^q d\Pi_\al(\te_i \given X)$, let  $\pi_\al(\cdot\given x)$ be the posterior given $\alpha$ evaluated at $X=x$,
\[ r_q(\al,\mu,x)=\int |u-\mu|^q d\pi_\al(u\given x)= (1-a(x))|\mu|^q + 
a(x)\int |u-\mu|^q \ga_x(u)du. \]

\begin{lem} \label{lemmomga}
Denoting $\ga_x(u) = \ga(u)\phi(x-u)/\gf(x)$, for any $q\in(0,2]$ and $\mu,x\in\RR$,
\begin{equation} \label{eq: UB_kappa_x} 
 \int |u-\mu|^q \ga_x(u)du \le  C\left[ |x-\mu|^{q} + 1\right]. 
\end{equation} 
For $\al$ small enough, for any $q>2\delta$ and any $\mu,x\in\RR$, the following risk bounds hold
\begin{align*}
 E_0 r_q(\al,0,x) & \leqa \al \ta(\al)^{q-\delta}/(q-\delta) + \ta(\al)^{q-1} \phi(\ta(\al))+ \phi(\ta(\al))/\ta(\al), \\
 E_\mu r_q(\al,\mu,x) & \leqa (1+\ta(\al)^q). 
\end{align*}
\end{lem}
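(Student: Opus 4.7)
For the first inequality, I would exploit the elementary bound $|u-\mu|^q\le 2^{(q-1)\vee 0}(|u-x|^q+|x-\mu|^q)$ valid for $q\in(0,2]$, so that it suffices to show $\int |u-x|^q\ga_x(u)\,du\le C$ uniformly in $x\in\RR$. Since $\ga_x$ is a probability density, H\"older's inequality further reduces this to the uniform second-moment bound $\int(u-x)^2\ga_x(u)\,du\le C$. After the substitution $y=x-u$, the integrand becomes $y^2\phi(y)\,\ga(x-y)/\gf(x)$; splitting at $|y|=|x|/2$, the Lemma \ref{lem-gf} estimates $\ga(x-y)\asymp\gf(x)\asymp(1+|x|)^{-1-\delta}$ for large $|x|$ control the bulk $\{|y|\le|x|/2\}$ (via the finite integral of $y^2\phi(y)$), while the Gaussian factor $\phi(y)$ makes the tail $\{|y|>|x|/2\}$ negligible. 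For bounded $|x|$ everything is trivially controlled, so the uniform bound follows.

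For the bound on $E_0 r_q(\al,0,X)$ the term $(1-a(X))|\mu|^q$ vanishes when $\mu=0$, and the first inequality of the lemma reduces the claim to bounding $E_0[a(X)(|X|^q+1)]$. I would split the expectation at $|X|=\ta(\al)$, applying the bound $a(x)\lesssim \al\,\gf(x)/\phi(x)$ from \eqref{boundsa} for $|X|\le\ta$ and the trivial $a(x)\le 1$ for $|X|>\ta$. The central computation is
\[
\int_0^{\ta} x^q\,\gf(x)\,dx\,\asymp\,\int_0^{\ta} x^q(1+x)^{-1-\delta}\,dx\,\asymp\,\ta^{q-\delta}/(q-\delta),
\]
valid by Lemma \ref{lem-gf} and using $q>\delta$ (guaranteed by $q>2\delta$). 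Combined with the standard Gaussian-tail estimates $\int_{|x|>\ta}|x|^q\phi(x)\,dx\lesssim \ta^{q-1}\phi(\ta)$ and $P_0(|X|>\ta)\lesssim \phi(\ta)/\ta$, one obtains the three summands in the target bound; the residual $O(\al)$ arising from $E_0[a(X)\1_{|X|\le\ta}]$ is absorbed by $\al\ta^{q-\delta}/(q-\delta)$ since $\ta(\al)\to\infty$ as $\al\to 0$ by Lemma \ref{lemzetaall}.

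For $E_\mu r_q(\al,\mu,X)$, decompose $E_\mu r_q=|\mu|^q E_\mu[1-a(X)]+E_\mu\bigl[a(X)\!\int|u-\mu|^q\ga_X(u)\,du\bigr]$. The second summand is $O(1)$ by the first part of the lemma combined with $a\le 1$ and the fact that $X-\mu\sim\mathcal N(0,1)$ has bounded $q$-th moment. For the first summand I would separate two regimes: if $|\mu|\le 2\ta$, the crude estimates $|\mu|^q\lesssim\ta^q$ and $E_\mu[1-a(X)]\le 1$ suffice, giving a contribution $\lesssim\ta^q$; if $|\mu|>2\ta$, applying \eqref{postwb} bounds $E_\mu[1-a(X)]$ by $P_\mu(|X|\le\ta)+\int_{|x|>\ta}e^{-(|x|-\ta)^2/2}\phi(x-\mu)\,dx$. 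The identity $(x-\ta)^2+(x-\mu)^2=2(x-(\ta+\mu)/2)^2+(\mu-\ta)^2/2$ turns the integral into a constant multiple of $e^{-(|\mu|-\ta)^2/4}$, while Gaussian deviation bounds give $P_\mu(|X|\le\ta)\lesssim e^{-(|\mu|-\ta)^2/2}$. The exponential decay dominates $|\mu|^q$ uniformly on $|\mu|>2\ta$, so the first summand is $\lesssim\ta^q+1$, which yields the claimed bound.

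The main obstacle is the regime $|\mu|\gg\ta$ in the last step, where one must carry the completion-of-the-square bookkeeping carefully enough that the exponential factor in $(|\mu|-\ta)^2$ genuinely absorbs the polynomial growth $|\mu|^q$ for all $\mu$ simultaneously; everything else is an accumulation of Gaussian tail bounds, polynomial moment estimates of $\gf$ from Lemma \ref{lem-gf}, and threshold monotonicity from Lemma \ref{lemzetaall}.
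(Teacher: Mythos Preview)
Your proposal is correct and follows the same overall architecture as the paper: reduce \eqref{eq: UB_kappa_x} to a uniform second-moment bound via H\"older, split the zero-signal risk at $\ta(\al)$ using \eqref{boundsa}, and handle the nonzero-signal case by a cutoff on $|\mu|$ together with \eqref{postwb}. Two minor differences in execution are worth flagging. For the second-moment bound, the paper exploits the identity $\int(u-x)^2\ga_x(u)\,du = g''(x)/g(x)+1$ and then uses $|\ga''|\le c_2\ga$ (whence $|g''|\le c_2 g$) to get a one-line uniform constant; your direct splitting of the $y$-integral at $|y|=|x|/2$ reaches the same conclusion but is slightly more laborious. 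For the large-$|\mu|$ exponential integral, the paper splits into $A=\{|x-\mu|\le|\mu|/2\}$ and its complement, bounding each piece by a Gaussian tail $\psi(|\mu|/4)$; your completion of the square is cleaner but requires (routinely) handling the two branches of $|x|$ separately. Neither difference is substantive.
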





\section{Proofs for credible sets} \label{sec:proofs:cs}

The credible set $\cC_{q,\al}$ in \eqref{def: credible_mmle_q_fixed} is centered around the posterior median $\hat\te_\al$. We shall use below a few basic properties of this estimator, which were established in \cite{js04} (they extend without difficulty to the case of heavier tails than Cauchy, as no moments conditions on $\ga$ are needed for their proof): the fact that $\hat\te_\al$ is a shrinkage rule with the bounded shrinkage property, from Lemma 2 of \cite{js04}, and the bounds of the risk  $E_{\te_0} d_q(\hatha,\te_0)$ for fixed $\alpha$ in Lemmas 5 and 6 of \cite{js04}, recalled in the Supplement \cite{castillo:szabo:2018:supplement}, see \eqref{pmed1}--\eqref{pmed2} there.

\subsection{Proof of Propositions \ref{thm: coverage:nonadapt}
 and \ref{thm: radius:nonadapt} 
}\label{sec: coverage:nonadapt}

For any given $\te_1,\te_2$ in the credible set $\cC_{q,\al}$ from \eqref{def: credible_mmle_q_fixed}, by Lemma \ref{lempq},
\begin{align*}
 d_q(\te_1,\te_2) & \le (2^{q-1}\vee1) \Big(d_q( \te_1, \hatha) +d_q(\te_2, \hatha )\Big)
  \le  2(2^{q-1}\vee1)M v_{q,\al}(X).
\end{align*} 
Using the risk bounds from Lemma \ref{lemmomga} with $\mu=\te_{0,i}$ for each index $i$ between $1$ and $n$ leads to, distinguishing between the signal case ($\te_{0,i}=0$) and non-zero signal case ($\te_{0,i}\neq 0$),
\[ E_{\te_0}\int d_q(\te,\te_0)  d\Pi_\al(\te\given X) \le C[(n-s) \al\ta(\al)^{q-\delta} 
+ s( 1+\ta(\al)^q )]. \]
Also, combining Lemmas 5 and 6 in  \cite{js04} with fixed non-random threshold equal to $t(\al)$, one gets
\[ E_{\te_0} d_q(\hatha,\te_0) \le C[ (n-s)t(\al)^{q-1}\phi(t(\alpha)) + s(1+t(\al)^q) ]. \]
From the last displays one deduces that, for any $\te_1,\te_2$ in $\cC_{q,\al}$,
\begin{align*}
 E_{\te_0} d_q(\te_1,\te_2)  & \le C M \left[ (n-s)\Big\{\al\ta(\al)^{q-\delta}+t(\al)^{q-1}\phi(t(\al))\Big\} + s(1+\ta(\al)^q + t(\al)^q)\right].
\end{align*}
Let us recall the inequality $\al^{-1}=\bef(\zeta(\al))\le 2+\bef(t(\al))$ stated in Section \ref{sec: gen_prop_thres} and  $\bef(\cdot)=(g/\phi)(\cdot)-1$ by \eqref{eqbeta}. As $t(\al)\to +\infty$ when $\alpha\to 0$, we have that for small $\alpha$ it holds $1+(g/\phi)(t(\al))\leqa (g/\phi)(t(\al))$. Lemma \ref{lem-gf} gives that $g(x)$ has tails  $x^{-1-\delta}$ as $x\to\infty$, which implies, using again that $t(\al)$ goes to infinity as $\alpha$ goes to $0$, that for $\alpha$ small enough,
\begin{equation}\label{eq: UB:phi:t}
\phi\big(t(\alpha)\big)\lesssim \alpha \gf\big(t(\alpha)\big)\lesssim \alpha t(\alpha)^{-1-\delta}.
\end{equation}
Using Lemma \ref{lemzetaall}, the above inequality on the diameter  becomes the following, hereby proving the second part of Proposition \ref{thm: radius:nonadapt}: for any $\te_1,\te_2$ in $\cC_{q,\al}$ and small enough $\al$, 
\[   E_{\te_0} d_q(\te_1,\te_2)   \le C M \left[ n \al\log^{(q-\delta)/2}(1/\al) + s\log^{q/2}(1/\al)\right]. \]

\noindent{\em Confidence.} 
Next one considers the coverage probability 
\[ P_{\te_0}[ \te_0\in\cC_{q,\al}] 
= P_{\te_0}[ d_q(\te_0,\hatha)\le M v_{q,\al}(X) ].
\]
Let $\mu_n=\mu_n(X)=d_q(\te_0,\hatha)$ and 
 $S_0:=\{i:\ \te_{0,i}\neq 0\}$, then
\begin{align*}
\mu_n & = \sum_{i\in S_0} 
|\te_{0,i} - \hat\te_{\al,i}  
|^q 
+ \sum_{i\notin S_0} |\hat\te_{\al,i}|^q \1_{|\veps_i|>t(\al)} =: \mu_1+\mu_2.
\end{align*}
With $v_{q,\al}=v_{q,\al}(X)$, $\omega_q(x)=\int |u|^q \ga_x(u) du$, and $\kappa_{\al,q,i}(x)=\int |u-\hat\te_{\al,i}|^q\ga_x(u)du$,
\begin{align}
v_{q,\al} = & \sum_{i=1}^n  \int |\te_i-\hat\te_{\al,i}|^q d\Pi(\te_i\given X_i) \nonumber\\
= & \sum_{i: |X_i|\le t(\al)} \int |\te_i|^q d\Pi(\te_i\given X_i) 
+  \sum_{i: |X_i|>t(\al)} \int |\te_i-\hat\te_{\al,i}|^q d\Pi(\te_i\given X_i) \nonumber \\
= &  \sum_{i\in S_0: |X_i|\le t(\al)} a(X_i) \omega_q(X_i)
+  \sum_{i\in S_0: |X_i|>t(\al)} \left[a(X_i)\kappa_{\al,q,i}(X_i) + (1-a(X_i))|\hat\te_{\al,i}|^q\right]\nonumber\\
 & + \sum_{i\notin S_0: |\veps_i|\le t(\al)} a(\veps_i) \omega_q(\veps_i)
+  \sum_{i\notin S_0: |\veps_i|>t(\al)} \left[ a(\veps_i)\kappa_{\al,q,i}(\veps_i) + (1-a(\veps_i))
|\hat\te_{\al,i}|^q\right] \nonumber\\
& =: v_1+v_2+v_3+v_4.\label{eq: variance}
\end{align}

\noindent{\em Step 1, lower bound on the variance.} 
The variance at fixed $\al$ is bounded from below by, using first  Lemma \ref{lem-lbv} to bound $\omega_q$ from below, and next using $(1-\al)\phi(\veps_i)+\al\gf(\veps_i)\le 2(1-\al)\phi(\veps_i)$ when $|\veps_i|\le \ta(\al)$ to bound $a(\veps_i)$ from below, 
\begin{align}
  v_3 \ge \sum_{i\notin S_0,\, 
C_0\le |\veps_i|\le \ta(\al)} a(\veps_i) \omega_q(\veps_i)
\geq c\al \sum_{i\notin S_0} \frac{\gf}{\phi}(\veps_i) (|\veps_i|^q+1) \1_{C_0\le |\veps_i|\le \ta(\al)},\label{eq: coverage:LB:var}
\end{align}
for any $C_0>0$ and $\alpha<1/2$ (say). One deduces that, in view of the assumption $q>\delta$, for small enough choice of $C_0>0$ 
\begin{align*} 
 E[\al \sum_{i\notin S_0} \frac{\gf}{\phi}(\veps_i) (|\veps_i|^q+1) \1_{C_0\le |\veps_i|\le \ta(\al)}] & \ge  \frac{(n-s)\al}{4} \int_{C_0}^{\ta(\al)} \gf(x) x^q dx\\
&   \ge \frac{1}{2^3(q-\delta)}n\al\ta(\al)^{q-\delta}.
\end{align*} 
Furthermore in view of the inequality $(1+|u|^q)^2\le C u^{2q}$ and Lemma \ref{lem-gf} we get that
\begin{align*}
\text{Var}\big[\al \sum_{i\notin S_0} \frac{\gf}{\phi}(\veps_i) (|\veps_i|^q+1) \1_{C_0\le |\veps_i|\le \ta(\al)}\big]  
&\leq n\alpha^2 \int_{C_0}^{\ta(\al)} \frac{\gf^2}{\phi^2}(u) u^{2q} \phi(u)du \\
& \leqa n\al^2 \int_{C_0}^{\ta(\al)} u^{-2-2\delta+2q} \phi(u)^{-1}du.
\end{align*}
An integration by parts shows that, setting $d=-2-2\delta+2q$, the right hand side of the preceding display is further bounded from above by a multiple of $n\al^2\ta(\al)^{d-1} e^{\ta(\al)^2/2}$.
Using that $\ta(\al)\le t(\al)$ and $t(\al_n)^2= 2\log{n}$ (so that $e^{\ta(\al)^2/2}\le n$ for $\al>\al_n$), one gets that  the variance term is of smaller order than the square of the expectation, so 
\begin{align}
v_{q,\al}\geq  \frac{1}{2^4(q-\delta)}n\al\ta(\al)^{q-\delta},\label{eq:coverage:LB:var0}
\end{align}
with high probability. The diameter lower bound in Proposition \ref{thm: radius:nonadapt} follows, as $M v_{q,\al}\le \text{diam}_q(\cC_{q,\alpha})$, where we have used that $\delta<q$, the lower bound on $\tau(\al)$ from Lemma \ref{lemzetaall}, the inequality $n\al\ta(\al)^{q-\delta}\geqa n\al(2\log(1/\al)+C)^{(q-\delta)/2}$, and the latter is increasing in $\al$.\\

\noindent{\em Step 2, upper bound on the bias.} 
As a first step we give upper bounds for the terms $\mu_1$ and $\mu_2$. 
The posterior median is, as stated in \cite{js04} Lemma 2, a shrinkage rule with the bounded shrinkage property: there exists $b>0$ such that for  all $x\ge 0$ and $\alpha$,
\begin{equation}\label{boundedshr}
 (x-t(\al)-b)\vee 0 \le \hat\te_\al(x) \le x. 
\end{equation} 
This implies, as $\hat\te_\al(-x)=-\hat\te_\al(x)$, that by using Lemma \ref{lempq}
\begin{align}
 \mu_1 & \le (2^{q-1}\vee1)\Big(\sum_{i\in S_0}|\veps_i|^q + \sum_{i\in S_0} (t(\al)+b )^q\Big) 
  \lesssim \sum_{i\in S_0}|\veps_i|^q   + s(t(\al)+b)^q .\label{eq: UB_biasonzero:Lq}
\end{align}
Let us now use a standard chi-square bound: if $Z_i$ are $\cN(0,1)$ iid, for any integer $s\ge 1$ and $t>0$, one has $P[\sum_{i=1}^s (Z_i^2-1) \ge t] \le \exp\{ -t^2/[4(s+t)] \}$. For $t=s(\log(n/s))^{1/2}$, the bound is $\exp\{-s(\log(n/s))^{1/2}\}\le \exp\{-c\log^{1/2}n\}=o(1)$, so
\begin{align*}
 P\Big[ \sum_{i\in S_0} (\veps_i^2-1) > s\sqrt{\log(n/s)} \Big] =o(1).
 \end{align*}
Also note that  by H\"older's inequality, $\sum_{i\in S_0} |\veps_i|^q  \le (\sum_{i\in S_0} \veps_i^2)^{q/2} s^{1-q/2}$. Hence, 
\begin{equation}\label{eq: UB_biasonzero2:Lq}
\sum_{i\in S_0} |\veps_i|^q \leqa \big(s+s\sqrt{\log(n/s)}\big)^{q/2}s^{1-q/2} \leqa s \log^{q/4}(n/s),
\end{equation}
with probability tending to one. For $\mu_2$, using again that the posterior median is a shrinkage rule,
$\mu_2\le \mu_3 :=\sum_{i=1}^n  
|\veps_i|^q \1_{|\veps_i|> t(\al)}. $
 Then  in view of Lemma \ref{lem: help:adapt:3q} (with $t_1=0$, and $t_2=t(\al)\geq 1$) we have with probability tending to one that 
\[ \mu_3 \leqa t(\al)^{q-1} [ne^{-t(\al)^2/2}] + M_n t(\al)^{(q+1)/2} [ne^{-t(\al)^2/2}]^{1/2}. \]
 For the first term to dominate this expression it is enough, provided $M_n\to\infty$ slow enough,  that
 \begin{equation} \label{tec1:Lq}
  t(\al)^2 \le 2\log n - (3-q+c)\log\log n=2\log(n/(\log^{(3-q+c)/2} n)), 
 \end{equation}
 for some  $c>0$, which follows from the assumption $\alpha\gg s/n\gtrsim (\log n)^2/n$ and Lemma \ref{lemzetaall}. Therefore with probability tending to one
\begin{equation*}
 \mu_1+\mu_2 \lesssim
s \log^{\frac{q}{4}}(n/s) + s t(\al)^{q} +t(\al)^{q-1} [ne^{-t(\al)^2/2}].
\end{equation*}
Now recall the bound $v_{q,\al}\geqa n\al \ta(\al)^{q-\delta}$.
We conclude the proof of the first part of
 Proposition \ref{thm: coverage:nonadapt} (the positive coverage result) by noting that for $\alpha\geq s\log^{\delta/2}(n/s) /n$, in view of \eqref{eq: UB:phi:t} we have $\mu_1+\mu_2\lesssim s\log^{q/2}(n/s)+n\alpha t(\alpha)^{q-\delta-2}\lesssim v_{q,\al}$. Indeed, for the second term one has $q-\delta-2<0$, so $t(\al)^{q-\delta-2} \leq \ta(\al)^{q-\delta-2}$ using $t(\al)\ge \ta(\al)$. For the first term, using the lower bound in Lemma \ref{lemzetaall}, $\ta(\al)^2\geqa \log(1/\al)$ and next that
$\al \to \al \log(1/\al)^{(q-\delta)/2}$ is increasing, so that, using  $\alpha\geq s\log^{\delta/2}(n/s) /n$ again, one gets 
\begin{align*}
 v_{q,\al} \geqa n \{s\log^{\delta/2}(n/s) /n\}\ta(s\log^{\delta/2}(n/s)/n)^{q-\delta} \geqa s \log^{q/2}(n/s).
 \end{align*}
 Hence for large enough choice of $M$ in \eqref{def: credible_mmle}, one gets frequentist coverage tending to one. 

To obtain the second part of Proposition \ref{thm: coverage:nonadapt} (the  non-coverage result), we will use that by assumption $ \alpha=o(s\log^{\delta/2}(n/s) /n)$ and show below that  for some $C>0$,
\begin{align}
\inf_{\theta_0\in\tilde\Theta_{s,\alpha}} P_{\theta_0}\big(\mu_n\geq  C s\log^{q/2}(n/s)\big)\rightarrow 1,\label{eq: counter:bias}\\
v_{q,\al}=o(s\log^{q/2}(n/s)).\label{eq: counter:variance}
\end{align}
Then the result follows by combining the above statements.\\

\noindent{\em Step 3, lower bound on the bias.} To show \eqref{eq: counter:bias} note that for $\theta_0\in \tilde\Theta_{s,\alpha}$, and as $\ta(\al)\le t(\al)$,
\begin{align*}
\mu_n\geq\sum_{i\in S_0}|\hat\theta_{\alpha,i}-\theta_{0,i}|^q&\geq \sum_{\tau(\al)/8\leq |\theta_{0,i}| \leq  \tau(\al)/4}|\theta_{0,i}|^q \1_{\eps_i\in\big(-(3/4)\tau(\al),(3/4)\tau(\al)\big)}\\
&\geq  2^{-3q}\tau(\al)^{q}\sum_{i\in S_0} \1_{\eps_i\in\big(-(3/4)\tau(\al),(3/4)\tau(\al)\big)}.
\end{align*}
Then as $P\big( \eps_i\in \big(-(3/4)\tau(\al),(3/4)\tau(\al)\big)\big)\geq 3/4$ if $\alpha\leq \alpha_1$ for some sufficiently small $\alpha_1>0$, we get by Hoeffding's inequality that
\begin{align*}
P\big( \sum_{i\in S_0} \1_{\eps_i\in\big(-(3/4)\tau(\al),(3/4)\tau(\al)\big)}\leq s/2\big)\lesssim e^{-s/2}=o(1).
\end{align*}
This implies that $\mu_n\geq 2^{-3q-1}s\ta(\al)^q$ with high probability. This is at least of the order $Cs\log^{q/2}(n/s)$ using the assumption on $\alpha$.\\

\noindent{\em Step 4, upper bound on the variance.} Next we deal with \eqref{eq: counter:variance} by giving upper bounds for $v_1,v_2,v_3$ and $v_4$ in \eqref{eq: variance} for $\theta_0\in \tilde{\Theta}_{s,\alpha}$, separately. Let us start with $v_1$. Recalling that $\ta(\al)$ and $t(\al)$ differ slightly, let us split the sum defining $v_1$ over indexes $i\in S_0$ with  $|X_i|\le \ta(\al)$ and $\ta(\al)< |X_i|\le t(\al)$ respectively. In view of \eqref{boundsa} and \eqref{eq: UB_kappa_x} (with $\mu=0$), one gets
\begin{align*}
v_1&\lesssim \al \sum_{i\in S_0:\,|X_i|\leq \tau(\al)} \frac{\gf}{\phi}(X_i)(1+|X_i|^q) +\sum_{i\in S_0:\, \tau(\al)\leq |X_i|\leq t(\al)}(1+|X_i|^q)\nonumber\\
&\lesssim \al s e^{\tau(\al)^2/4}\tau(\al)^{q-\delta}+M_n\al s^{1/2}e^{3\tau(\al)^2/8}\tau(\al)^{q-\delta-1/2}\nonumber\\
&\qquad+ s \tau(\al)^{q-1}e^{-3^2\tau(\al)^2/2^5}+M_n s^{1/2}\tau(\al)^{(q+1)/2}e^{-3^2\tau(\al)^2/2^6}\nonumber=O(s),
\end{align*}
where the second inequality follows from Lemma \ref{lem: help:adapt:1q} (with $t=\tau(\al)/4$) and Lemma \ref{lem: help:adapt:3q} (with $t_2=\tau(\al)$, $t_1=\tau(\al)/4$).
Then in view of $|\hat\theta_{\al,i}|^q\leq |X_i|^q$ and $\kappa_{\al,q,i}\lesssim |\hat\theta_{\al,i}|^q+|X_i|^q+1\lesssim |X_i|^q+1$, see equation \eqref{eq: UB_kappa_x}, we get that
\begin{align*}
v_2&=\sum_{i\in S_0, |X_i|>t(\al)}a(X_i)\kappa_{\al,q,i}+(1-a(X_i))|\hat\theta_{\al,i}|^q\lesssim \sum_{i\in S_0, |X_i|>t(\al)}(1+|X_i|^q)\\
&\lesssim s t(\al)^{q-1}e^{-3^2t(\alpha)^2/2^5}+M_ns^{1/2}t(\al)^{(q+1)/2}e^{-3^2t(\alpha)^2/2^6}=O(s),
\end{align*}
where the last line follows from Lemma \ref{lem: help:adapt:3q} (with $t_2=t(\al)$, $t_1=t(\al)/4$) and from $e^{-c_1t(\alpha)^2}t(\alpha)^{c_2}=O(1)$ for $c_1,c_2>0$ and $\alpha\lesssim \alpha_1$. Next in view of Lemma \ref{lem: help:adapt:1q} (with $t=0$) and Lemma \ref{lem: help:adapt:3q} (with $t_1=0$ and $t_2=\tau(\al)$)
\begin{align*}
v_3&\lesssim \al \sum_{i\notin S_0:\,|\varepsilon_i|\leq \tau(\al)} \frac{\gf}{\phi}(\varepsilon_i)(1+|\varepsilon_i|^q) +\sum_{i\notin S_0:\, \tau(\al)\leq |\varepsilon_i|\leq t(\al)}(1+|\varepsilon_i|^q)\\
&\lesssim \al n\tau(\al)^{q-\delta}+n\tau(\al)^{q-1}e^{-\tau(\al)^2/2}+M_n \sqrt{n}\tau(\al)^{(q+1)/2}e^{-\tau(\al)^2/4}= o\big(s\log^{q/2} (n/s)\big),
\end{align*}
where the last line follows from the definition of $\tau(\al)$ (implying $e^{-\tau(\al)^2/2}\lesssim \al \gf(\tau(\al))\lesssim \al \tau(\al)^{-1-\delta}$) as well as $n\al=o(s\log^{\delta/2}(n/s))$ as assumed.   
To conclude the proof of Proposition \ref{thm: coverage:nonadapt} it is enough to note that, using similar arguments,
\begin{align*}
v_4&\lesssim \sum_{i\notin S_0, |\varepsilon_i|>t(\al)}(1+|\varepsilon_i|^q)
\lesssim n t(\al)^{q-1}e^{-\frac{t(\al)^2}{2}}+M_n n^{\frac{1}{2}}t(\al)^{\frac{q+1}{2}}e^{-t(\al)^2/4}=o(s\log^{\frac{q}{2}} (\frac{n}{s})).
\end{align*}

\subsection{Proof of Theorem \ref{thm: coverage_mmle_q}}\label{sec: coverage_mmle_q}

\noindent{\em Step 0, Concentration of $\hat{\alpha}$ under the excessive-bias condition.} A key component is to describe the behaviour of the MMLE $\hat\al$ over the set $\Theta_0^q[s;A,C_q,D_q]$. In view of \eqref{eq: subset:EB} (see also Lemma \ref{lem: effective:sparsity}, below) let us  consider the larger set $\Theta_{0}^2[s;A,C_q,D_q(\sqrt{2}A)^{2-q}]$ and denote by $\tilde{s}=\tilde{s}(\theta_0)$ the effective sparsity of $\theta_0\in \Theta_{0}^2[s;A,C_q,D_q(\sqrt{2}A)^{2-q}]$, i.e.
\begin{align}
\tilde s=\tilde s (\te_0)=\big| \bigl\{i: |\te_{0,i}|\geq A\sqrt{2\log (n/\ell)}\bigr\}\big|,\label{def: equiv:tilde:s}
\end{align}
where $\ell\geq(\log _2 n)^2$ is the smallest integer satisfying  \eqref{condition: EB} with parameters $A=A$, $C_2=C_q$ and $D_2=D_q(\sqrt{2}A)^{2-q}$.

Next we introduce weights $\tilde\al_i$, for $i=1,2$, as the solution of the respective equations
\begin{align}
d_i\tilde\al_i \tilde{m}(\tilde{\al_i})=\tilde{s}/n,\label{def:tilde:al}
\end{align}
where the constants $0<d_2<d_1$ are specified later. We note that in view of the fact that $\al\to \al\tilde{m}(\al)$ is increasing, which follows from Lemma \ref{lem-momgf},
\begin{align}
\tilde\alpha_2/\tilde\alpha_1\leq d_1/d_2,\qquad \tilde\alpha_2=o(1).\label{eq: bounds_fun_facts}
\end{align}
We now show that by appropriate choice of  constants $d_1>d_2$ the corresponding $\tilde\alpha_1<\tilde\alpha_2$ are upper and lower bounds, respectively, for $\hat\al$ under the excessive-bias condition.
\begin{lem}\label{thm: bounds_mmle}
 Under the conditions of Theorem \ref{thm: coverage_mmle_q}, for $\tilde{\alpha}_1, \tilde{\alpha}_2$ as in \eqref{def:tilde:al}, we have 
\begin{align}
\inf_{\theta_0\in \Theta_{0}^q[s;A,C_q,D_q]}P_{\theta_0}(\tilde\alpha_1\leq \hat\alpha\leq \tilde\alpha_2)\rightarrow 1.\label{assump: bounds}
\end{align} 
\end{lem}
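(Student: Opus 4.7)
The log-likelihood $\ell_n(\alpha;X)=\sum_{i=1}^n \log\bigl((1-\alpha)\phi(X_i)+\alpha g(X_i)\bigr)$ is concave in $\alpha$ as a sum of logarithms of affine (hence log-concave) functions of $\alpha$, so the score $S(\alpha)=\sum_{i=1}^n \beta(X_i,\alpha)$ is nonincreasing. It therefore suffices to prove that with probability tending to one, uniformly over $\theta_0\in\Theta_0^q[s;A,C_q,D_q]$, one has $S(\tilde\alpha_1)\geq 0$ and $S(\tilde\alpha_2)\leq 0$. The containment $[\tilde\alpha_1,\tilde\alpha_2]\subset\mathcal{A}_n=[\alpha_n,1]$ is then checked from $\tilde s/n=o(1)$, the definition \eqref{def:tilde:al}, the asymptotic $\tilde m(\alpha)\asymp \zeta(\alpha)^{-\delta}/\delta$ (Lemma \ref{lem-momgf}), and $\zeta(\alpha_n)^2\asymp \log n$ combined with $\tilde s\geq\log^2 n$.

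To analyse $E_{\theta_0}[S(\tilde\alpha_i)]$ I would split the coordinates into the $n-s$ zero ones, the large signals $L=\{i:|\theta_{0,i}|\geq A\sqrt{2\log(n/\ell)}\}$ (of cardinality $\tilde s$), and the remaining small nonzero signals. The zero coordinates contribute $-(n-s)\tilde m(\tilde\alpha_i)=-\tilde s/(d_i\tilde\alpha_i)+o(\tilde s/\tilde\alpha_i)$ by \eqref{def:tilde:al}. For the large signals, since $A>1$ implies $|\theta_{0,i}|\geq\zeta(\tilde\alpha_i)$ for $n$ large, the monotonicity of $\mu\mapsto m_1(\mu,\alpha)$ on $\RR^+$ combined with $m_1(\zeta,\alpha)\sim\tfrac12\alpha^{-1}$ (last part of Lemma \ref{lem-momgf}) produces a contribution of at least $\tfrac{\tilde s}{2\tilde\alpha_i}(1+o(1))$. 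Putting these together, $E_{\theta_0}[S(\tilde\alpha_i)]=\tfrac{\tilde s}{\tilde\alpha_i}\bigl(\tfrac12-\tfrac{1}{d_i}\bigr)(1+o(1))+R_i$, where $R_i$ absorbs the small-signal contribution. Choosing $d_2<2<d_1$ makes the main term positive at $i=1$ and negative at $i=2$, of order $\tilde s/\tilde\alpha_i$. The residual $R_i$ is controlled by further splitting the small signals at the thresholds $1/\zeta(\alpha)$ and $\zeta(\alpha)/2$, applying the three regime-dependent estimates on $m_1$ of Lemma \ref{lem-momgf}, and aggregating via H\"older's inequality using the $EB(q)$ mass bound $\sum_{\text{small}}|\theta_{0,i}|^q\leq D_q\ell\log^{q/2}(n/\ell)$.

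Concentration of $S(\alpha)$ around its mean then follows from Chebyshev applied to $\text{Var}_{\theta_0}[S(\alpha)]\leq\sum_i m_2(\theta_{0,i},\alpha)$: the zero coordinates give variance of order $n\tilde m(\alpha)/(\alpha\zeta^2)\asymp \tilde s/(\tilde\alpha_i^2\zeta(\tilde\alpha_i)^2)$, whose square root is $o(\tilde s/\tilde\alpha_i)$ under $\tilde s\zeta^2\gtrsim\log^3 n\to\infty$, and the signal coordinates yield an analogous, absorbable bound after the same regime-based splitting applied to $m_2$.

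The main technical obstacle I anticipate is the uniform control of $R_i$: the $EB(q)$ condition only constrains an $\ell^q$-mass of the small signals, while $m_1(\mu,\alpha)$ transitions between three qualitatively distinct regimes across the small-signal range, so the bounds must be pieced together delicately and shown to leave an error $o(\tilde s/\tilde\alpha_i)$ uniformly in $\theta_0$ and in the admissible parameters $(\delta,q,A,C_q,D_q)$. This uniformity is what ultimately pins down the precise values of $d_1, d_2$ in the statement.
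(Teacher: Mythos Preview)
Your overall strategy matches the paper's: use monotonicity of the score, bound $E_{\theta_0}[S(\tilde\alpha_i)]$ by splitting coordinates according to signal size and applying the regime bounds of Lemma~\ref{lem-momgf}, then conclude by a concentration inequality. The paper uses Bernstein rather than Chebyshev, but since the variance is of order $n\tilde m(\tilde\alpha_i)/\tilde\alpha_i\asymp \tilde s/\tilde\alpha_i^2$ and the mean of order $\tilde s/\tilde\alpha_i$, Chebyshev already gives a bound $O(1/\tilde s)\to 0$, so this is a harmless variation.

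There is, however, a genuine gap in your treatment of the upper bound $\hat\alpha\le\tilde\alpha_2$. First, the large-signal estimate $m_1(\zeta,\alpha)\sim\tfrac12\alpha^{-1}$ is only a \emph{lower} bound on the contribution of $L$; for $i=2$ you need an upper bound, and the relevant one is $m_1\le \alpha^{-1}$, giving $\tilde s/\tilde\alpha_2$ rather than $\tilde s/(2\tilde\alpha_2)$. More importantly, your claim that the residual $R_2$ is $o(\tilde s/\tilde\alpha_2)$ is false. Among the ``small'' signals with $|\theta_{0,i}|<A\sqrt{2\log(n/\ell)}$, those lying in the range $\bigl(\zeta(\tilde\alpha_2)/2,\,A\sqrt{2\log(n/\ell)}\bigr)$ fall outside all three refined regimes of Lemma~\ref{lem-momgf} and must be handled by the global bound $m_1\le\alpha^{-1}$. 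Their number is controlled only by Markov's inequality against the $EB(q)$ mass bound, yielding a count of order $D_q\ell\log^{q/2}(n/\ell)\big/(\zeta(\tilde\alpha_2)/2)^q\asymp C_qD_q\,\tilde s$, and hence a contribution of order $C_qD_q\,\tilde s/\tilde\alpha_2$ to $R_2$. This is the \emph{same} order as your ``main term'', not negligible.

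The consequence is that $d_2<2$ (or even $d_2<1$) is not enough: one needs $d_2$ small depending on $C_q,D_q$, so that $-1/d_2$ dominates the combined constant from the large signals and from this intermediate slab. This is exactly how the paper proceeds (see the choice $d_2<c_0^{-1}(4C_2D_2+1)^{-1}/2$ there, after passing to $EB(2)$ via \eqref{eq: subset:EB} and invoking Lemma~\ref{lem: help:adapt:2q}). Your final paragraph hints that the values of $d_1,d_2$ are ``pinned down'' by the residual analysis, but the preceding sentence asserting $R_i=o(\tilde s/\tilde\alpha_i)$ contradicts this and should be corrected.
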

The proof of the lemma is deferred to Section \ref{sec: bounds_mmle} of the Supplement \cite{castillo:szabo:2018:supplement}.

\noindent{\em Step 1, lower bound on the variance.}  In view of \eqref{eq: variance}--\eqref{eq: coverage:LB:var} and Lemma \ref{thm: bounds_mmle}, with $P_{\theta_0}$-probability tending to one
\begin{align*}
v_{q,\hat \al}& \geq v_3(\hat\al)\geq c\hat\al \sum_{i\notin S_0}\frac{g}{\phi}(\eps_i)(|\eps_i|^q+1)\1_{C_0\leq |\eps_i|\leq \tau(\hat\alpha)}\\
&\geq c\tilde\alpha_1\sum_{i\notin S_0}\frac{g}{\phi}(\eps_i)(|\eps_i|^q+1)\1_{C_0\leq |\eps_i|\leq \tau(\tilde\alpha_2)}.
\end{align*}
Then following from the computations above assertion \eqref{eq:coverage:LB:var0}, Lemma \ref{lem-momgf} and the inequality $\ta(\al)\le \zeta(\al)$ we have, for large enough $n$, 
\begin{align*}
v_{q,\hat \al}& \geq \frac{1}{2^4(q-\delta)}\tilde\al_1 n\ta(\tilde\al_2)^{q-\delta} 
 \geq \frac{1}{2^4(q-\delta)} \frac{\tilde{s} \ta( \tilde\al_2)^{q-\delta}}{d_1\zeta(\tilde\al_1)^{-\delta}/\delta}
 \geq \frac{\delta}{2^4d_1(q-\delta)}\tilde{s} \log^{q/2}(n/\tilde{s}).
\end{align*}

\noindent{\em Step 2, upper bound on the bias.} Let split the bias term
\[ \mu_n(\hat\al) = \sum_{i=1}^n |\te_{0,i} - \hat\te_{\hat\al,i}|^q\]
along the index sets $Q_1=\{i:\, |\theta_{0,i}|\leq 1/t(\tilde\al_2) \}$, $Q_2=\{i:\,  1/t(\tilde\al_2)\leq |\theta_{0,i}|\leq t(\tilde\al_2)/2 \}$ and $Q_3=\{i:\, |\theta_{0,i}|\geq t(\tilde\al_2)/2 \}$, i.e.
\begin{align*}
\mu_n(\hat\al)=\sum_{i\in Q_1\cup Q_2}|\theta_{0,i}-\hat\te_{\hat\al,i}|^q+\sum_{i\in Q_3}|\theta_{0,i}-\hat\te_{\hat\al,i}|^q
=:\mu_1(\hat\al)+\mu_2(\hat\al).
\end{align*}
Using that $|\hat\theta_{\hat\al,i}|^q\leq \1_{|X_i|> t(\hat\al)}|X_i|^q$, Lemma \ref{lempq}, the monotone decreasing property of the functions $t\mapsto \1_{|X_i|>t}$ and Lemma \ref{thm: bounds_mmle} we have with probability tending to one that
\begin{align}
\mu_1(\hat\al)&\leq (2^{q-1}\vee1)\Big( \sum_{i\in Q_1\cup Q_2}|\theta_{0,i}|^q+ \sum_{i\in Q_1}|X_i|^q \1_{|X_i|>t(\hat\al)}+ \sum_{i\in Q_2}|X_i|^q \1_{|X_i|>t(\hat\al)}\Big)\nonumber\\
&\leq (2^{q-1}\vee1)\Big( \sum_{i\in  Q_1\cup Q_2}|\theta_{0,i}|^q+ \sum_{i\in Q_1}|X_i|^q \1_{|X_i|>t(\tilde\al_2)}+\sum_{i\in Q_2}|X_i|^q \1_{|X_i|>t(\tilde\al_2)} \Big).\label{eq: hulp01q}
\end{align}
The first term is smaller than $C_qD_qc_0\tilde{s} \log^{q/2}(n/\tilde{s})$ following from Lemma \ref{lem: effective:sparsity} and $t(\tilde\al_2)/2\sim \{0.5\log (n/\tilde{s})\}^{1/2}<A'\{2\log (n/\tilde{s})\}^{1/2}$, for any $A'>0$ and large enough $n$.  Furthermore, by applying Lemma \ref{lem: help:adapt:3q} (with $t_1=1/t(\tilde\al_2)$, $t_2=t(\tilde\al_2)$) and  the inequality $|Q_1|<n$ the second term on the right hand side of the preceding display is bounded from above with $P_{\theta_0}$-probability tending to one by a multiple of, for arbitrary $M_n\rightarrow\infty$,
$$nt(\tilde\al_2)^{q-1}e^{-t(\tilde\al_2)^2/2}+M_n t(\tilde\al_2)^{(q+1)/2}(ne^{-t(\tilde\al_2)^2/2})^{1/2}.$$
Then by noting that in view of Lemma \ref{lemz}, we have $nt(\tilde\al_2)e^{-t(\tilde\al_2)^2/2}\gtrsim \tilde{s}\geq(\log n)^2$, 
\begin{align}
t(\tilde\al_2)^{(q+1)/2} (ne^{-t(\tilde\al_2)^2/2})^{1/2}=o(nt(\tilde\al_2)^{q-1} e^{-t(\tilde\al_2)^2/2}).\label{eq: hulp02q}
\end{align}
Finally, in view of Lemma \ref{lem: help:adapt:3q} (with $t_1=t(\tilde\al_2)/2$ and $t_2=t(\tilde\al_2)$) the third term in the right hand side of \eqref{eq: hulp01q} is bounded with probability tending to one by a multiple of
\begin{align*}
|Q_2| t(\tilde\al_2)^{q-1}e^{-t(\tilde\al_2)^2/8}+M_n t(\tilde\al_2)^{(q+1)/2}  (|Q_2|e^{-t(\tilde\al_2)^2/8})^2.
\end{align*}
 Then note that in view of Lemma \ref{lem: help:adapt:2q} (with $t=1/t(\tilde\al_2)$)  the cardinality of the set $Q_2$ is bounded from above by a multiple of $\tilde{s} t(\tilde\al_2)^{q}\log^{q/2} (n/\tilde{s})$, hence by using that the function $t\mapsto t^{c_{1}}e^{-c_2 t^2}$ tends to zero as $t$ goes to infinity for arbitrary $c_1\in\mathbb{R}$ and $c_2>0$, the preceding display is of smaller order than $\tilde{s}$. By putting together the obtained bounds, one concludes that  by choosing $M_n$ tending to infinite sufficiently slowly, e.g. $M_n=o(\log^{q/4}(n/\tilde{s}))$, one gets 
\begin{align*}
\mu_1(\hat\al)\leq  (2^{q-1}\vee1)\Big( C n t(\tilde\al_2)^{q-1}e^{-t(\tilde\al_2)^2/2}+D_qC_qc_0\tilde{s}\log^{q/2} (n/\tilde{s})\Big).
\end{align*}
It remained to deal with $\mu_2(\hat\al)$. In view of assertion  \eqref{eq: UB_biasonzero:Lq}
\begin{align}
\mu_2(\hat\al)\leq  (2^{q-1}\vee1 )\Big(\sum_{i\in Q_3}|\eps_i|^q + \sum_{i\in Q_3} (t(\hat\al)+b)^q\Big).\label{eq: hulp05q}
\end{align}
Lemma \ref{lem: help:adapt:2q} with $t=t(\tilde\al_2)/2$ implies $|Q_3| \le c_0(2^qD_qC_q+1) \tilde{s}$, so  the second term is bounded with probability tending to one using Lemma \ref{thm: bounds_mmle}  and then Lemma \ref{lemz} by
\begin{align*}
\sum_{i\in Q_3} (t(\hat\al)+b)^q\leq |Q_3|(t(\tilde\al_1)+b)^q \leq 3c_0(2^qD_qC_q+1)\tilde{s} \log^{q/2} (n/\tilde{s}),
\end{align*}
for $n$ large enough. Next we deal with the first term on the right hand side of \eqref{eq: hulp05q}. In view of \eqref{eq: UB_biasonzero2:Lq} we have with probability tending to one that
\begin{align*}
\sum_{i\in Q_3} |\veps_i|^q  \lesssim |Q_3|\log^{q/4}(n/|Q_3|)\leq c_0(2^qD_qC_q+1) \tilde{s}\log^{q/4}(n/\tilde{s}). 
\end{align*}
Now combining \eqref{eq: UB:phi:t} and Lemmas \ref{lem-momgf} and \ref{lemz}, one sees that
\begin{equation}\label{eq: UB:extra term:t}
n t(\tilde\al_2)^{q-1}e^{-t(\tilde\al_2)^2/2}\lesssim n\tilde\al_2  t(\tilde\al_2)^{q-2-\delta}  \lesssim\zeta(\tilde\al_2)^\delta \tilde{s} t(\tilde\al_2)^{q-2-\delta} =o\big(\tilde{s}\big)
\end{equation}
so putting the bounds on $\mu_1, \mu_2$ together, with probability tending to one,
\begin{align*}
\mu_n(\hat\al) \leq (2^{q-1}\vee1 )3c_0\Big(2^qD_qC_q+1+o(1) \Big)  \tilde{s} \log^{q/2}(n/\tilde{s}). 
\end{align*}
Therefore by choosing $M>3c_0(2^qD_qC_q+1)(2^{q-1}\vee1 ) 2^4d_1(q-\delta)/\delta$ we get frequentist coverage tending to one.
Note that the preceding results hold simultaneously for all $q\in(\sbl{2}\delta,2)$ if $M> \sup_{q\in (2\delta,2)}3c_0(2^qD_qC_q+1)(2^{q-1}\vee1 ) 2^4d_1(q-\delta)/\delta$.

To conclude the proof of Theorem \ref{thm: coverage_mmle_q}, it is enough to obtain the diameter bound, which is done using similar arguments as for the upper bound of Proposition \ref{thm: radius:nonadapt}, once the MMLE $\al$  is controlled using Lemma \ref{thm: bounds_mmle} as above. The detailed argument can be found in Section \ref{sec: coverage_mmle_q-supp} of the Supplement \cite{castillo:szabo:2018:supplement}. This concludes the proof.

\subsection{Technical lemmas for credible sets}

\begin{lem} \label{lempq}
For any reals $x,y$, and $q>0$,
\begin{align}
 |x+y|^q & \le (2^{q-1}\vee 1)\left[ |x|^q + |y|^q \right]. \label{manipq}
\end{align} 
\end{lem}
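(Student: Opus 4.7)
The plan is to split into the two regimes $q\ge 1$ and $0<q<1$, and in each case first reduce to nonnegative arguments via the triangle inequality $|x+y|\le |x|+|y|$ combined with monotonicity of $t\mapsto t^q$ on $[0,\infty)$. After that reduction it suffices to prove, for $a,b\ge 0$,
\[ (a+b)^q \le (2^{q-1}\vee 1)(a^q+b^q). \]

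For $q\ge 1$, I would use convexity of $t\mapsto t^q$ on $[0,\infty)$: applied to the midpoint $(a+b)/2$ this gives $((a+b)/2)^q \le (a^q+b^q)/2$, hence $(a+b)^q \le 2^{q-1}(a^q+b^q)$, which matches the constant $2^{q-1}\vee 1 = 2^{q-1}$ in this range. For $0<q<1$, concavity of $t\mapsto t^q$ together with the fact that this map vanishes at $0$ yields subadditivity: $(a+b)^q\le a^q+b^q$ (one can see this directly by noting $(a+b)^q = a^q(1+b/a)^q \le a^q(1+(b/a)^q) = a^q+b^q$ whenever $a>0$, and symmetrically when $b>0$). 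In this range $2^{q-1}\vee 1 = 1$, so the inequality again matches.

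There is no real obstacle here; the only mild point of care is getting the constant right on the boundary $q=1$, where both branches give the constant $1$ and the result degenerates to the triangle inequality. Combining the two cases gives the stated inequality with constant $2^{q-1}\vee 1$, valid for every $q>0$ and all real $x,y$.
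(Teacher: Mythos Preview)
Your proof is correct and follows essentially the same approach as the paper: convexity (Jensen at the midpoint) for $q\ge 1$, and subadditivity of $t\mapsto t^q$ on $[0,\infty)$ for $0<q<1$. Your explicit reduction to nonnegative arguments via $|x+y|\le |x|+|y|$ makes the $q<1$ case slightly cleaner than the paper's terse monotonicity remark, but the underlying idea is identical.
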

 \begin{proof}
First note that for $q<1$ the map $x\to |x+y|^q-|x|^q-|y|^q$ is monotone. Furthermore, since the map $x\mapsto |x|^q$ is convex for $q\geq 1$ we have by Jensen's inequality that $|(x+y)/2|^q\leq (|x|^q+|y|^q)/2$.
 \end{proof}
 The proofs of Lemmas \ref{lem: effective:sparsity} up to \ref{lem: help:adapt:2q} can be found in Section \ref{sec:proofs:technical:credible} of the Supplement \cite{castillo:szabo:2018:supplement}.
 
\begin{lem}\label{lem: effective:sparsity}
Assume that $\te_0$ satisfies the excessive-bias condition $EB(q)$ in \eqref{condition: EB_q} for some positive parameters $A,C_q,D_q$ and therefore it belongs also to $\Theta_0^{2}[s;A,C_q,D_q(\sqrt{2}A)^{2-q}]$. Let us denote by $\tilde{s}_{q}$ and $\tilde{s}$ the corresponding effective sparsity parameters. Then there exists a large enough $c_0\geq 1$ such that
$$\tilde{s}\leq \tilde{s}_{q}\leq c_0\tilde{s}.$$
Furthermore, for every $1<A'<A$,  and large enough $n$,
\begin{align*}
\sum_{|\theta_{0,i}|\leq A'\sqrt{2\log (n/\tilde{s})}}|\theta_{0,i}|^q\leq C_q D_qc_0\tilde{s} \log^{q/2}(n/\tilde{s}), \quad
\tilde{s}\leq \big|\big\{ |\theta_{0,i}|\geq A' \sqrt{2\log(n/\tilde{s})} \big\}\big|.
\end{align*}
\end{lem}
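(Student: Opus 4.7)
The plan is to prove the lemma in three parts. \textbf{First,} I would establish $\tilde{s}\leq\tilde{s}_q$ via a minimality argument. By the inclusion \eqref{eq: subset:EB}, applying the elementary bound $|\theta_{0,i}|^{2}\leq|\theta_{0,i}|^{q}\bigl(A\sqrt{2\log(n/\ell^*_q)}\bigr)^{2-q}$ on the EB$(q)$ small-signal range, EB$(q)$ at the minimiser $\ell^*_q$ forces EB$(2)$ (with constants $A,C_q,D_q(\sqrt 2 A)^{2-q}$) to hold at the same $\ell^*_q$. The minimality of $\ell^*_2$ then gives $\ell^*_2\leq\ell^*_q$, and monotonicity of the tail count $t\mapsto|\{|\theta_{0,i}|\geq t\}|$ yields $\tilde{s}\leq\tilde{s}_q$.

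\textbf{Second,} to get $\tilde{s}_q\leq c_0\tilde{s}$ I would bound the ``excess'' indices in the annulus $\bigl[A\sqrt{2\log(n/\ell^*_q)},A\sqrt{2\log(n/\ell^*_2)}\bigr)$. These signals lie below the EB$(2)$ threshold, so each contributes at least $2A^2\log(n/\ell^*_q)$ to the EB$(2)$ small-signal $\ell^2$--energy, which in turn is bounded by $D_q(\sqrt 2 A)^{2-q}\ell^*_2\log(n/\ell^*_2)$. Combined with $\ell^*_2\leq C_q\tilde{s}$ (from the EB$(2)$ count condition), this yields
\begin{equation*}
\tilde{s}_q-\tilde{s}\;\leq\;\frac{D_q(\sqrt 2 A)^{2-q}C_q}{2A^2}\,\tilde{s}\,\frac{\log(n/\ell^*_2)}{\log(n/\ell^*_q)},
\end{equation*}
from which the claim follows provided the log-ratio is controlled by a universal constant.

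\textbf{Third,} the two final inequalities with $1<A'<A$ follow by direct threshold comparison. For the count claim, $A'\sqrt{2\log(n/\tilde{s})}\leq A\sqrt{2\log(n/\ell^*_2)}$ rearranges to $\ell^*_2\leq n^{1-(A'/A)^2}\tilde{s}^{(A'/A)^2}$, which, using $\ell^*_2\leq C_q\tilde{s}$ and $\tilde{s}\leq s=o(n)$, holds for $n$ large; monotonicity of the tail count finishes the argument. For the $\ell^q$--energy bound I would split the sum at the threshold $A\sqrt{2\log(n/\ell^*_q)}$: the lower part is controlled by the EB$(q)$ bound $D_q\ell^*_q\log^{q/2}(n/\ell^*_q)$, which combined with $\ell^*_q\leq C_q\tilde{s}_q\leq C_qc_0\tilde{s}$ (from Part~2) and the monotonicity of $\ell\mapsto\ell\log^{q/2}(n/\ell)$ on $\ell\leq n/e$ gives a bound of the form $C\tilde{s}\log^{q/2}(n/\tilde{s})$; the upper part contains at most $\tilde{s}_q\leq c_0\tilde{s}$ terms each of size at most $A'\sqrt{2\log(n/\tilde{s})}$, contributing $\leq c_0(A')^q2^{q/2}\tilde{s}\log^{q/2}(n/\tilde{s})$.

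\textbf{The main obstacle} is the log-ratio bound needed in Part~2: one must show that $\ell^*_q$ is not vastly larger than $\ell^*_2$. I would do this through a fixed-point type argument combining the count inequalities $\ell^*_q\leq C_q\tilde{s}_q$ and $\ell^*_2\leq C_q\tilde{s}$ with the sparsity constraint $\tilde{s},\tilde{s}_q\leq s=o(n)$ and the concavity of $\ell\mapsto\ell\log(n/\ell)$, so that otherwise one would contradict the energy bound derived above. Making the resulting constant $c_0$ quantitative in $A,C_q,D_q,q$ is the delicate step.
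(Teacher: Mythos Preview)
Your approach is essentially the paper's: same minimality argument for $\tilde s\le\tilde s_q$, same annulus-counting bound for the reverse inequality, and the same threshold comparison for the last two claims. Two remarks. For your flagged obstacle, the paper resolves the log-ratio by a clean case split rather than a fixed-point argument: with $K=2C_qD_q(\sqrt2 A)^{2-q}A^{-2}$, if $\ell^*_q\le K\ell^*_2$ then $\log(n/\ell^*_2)/\log(n/\ell^*_q)\le 2$ for $n$ large and your displayed bound finishes; if $\ell^*_q>K\ell^*_2$, monotonicity of $x\mapsto x\log(n/x)$ gives $\ell^*_q\ge (K/2)\,\ell^*_2\log(n/\ell^*_2)/\log(n/\ell^*_q)$, so the annulus term in your display is $\le \ell^*_q/(2C_q)\le \tilde s_q/2$, which rearranges to $\tilde s_q\le 2\tilde s$. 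For Part 3, your split at $A\sqrt{2\log(n/\ell^*_q)}$ is unnecessary: the same algebra you used for the count claim (now with $\ell^*_q\le C_qc_0\tilde s$ from Part 2) shows $A'\sqrt{2\log(n/\tilde s)}\le A\sqrt{2\log(n/\ell^*_q)}$ for $n$ large, so the EB$(q)$ energy bound already covers the whole sum and delivers the exact constant $C_qD_qc_0$ stated in the lemma.
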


\begin{lem}[Basic bounds on $\zeta(\al_1),\tau(\al_1)$ and $t(\al_1)$ and tilde versions] \label{lemz}
Let $\alpha_1$ be defined by \eqref{alq} for $d$ a given constant, and let $\zeta_1=\zeta(\al_1)$. Then for some constants $C_1, C_2$,
\[ 2\log(n/s) + C_1  
\le \zeta(\al_1)^2 \le 2\log(n/s)  + \log(1+\log(n/s)) +C_2.\]
The same bounds hold, with possibly different constants $C_1$ and $C_2$,  for $\tau(\al_1)^2$ and $t(\al_1)^2$. 
Furthermore, the same result holds (with possibly different constants $C_1,C_2>0$) when $\alpha_1$ is replaced by $\tilde\al_1$ or $\tilde\al_2$ (defined in \eqref{def:tilde:al}) and $s$ by $\tilde{s}$.
\end{lem}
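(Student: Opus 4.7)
The plan is to combine the two-sided bound on $\zeta(\alpha)^2$ in terms of $\log(1/\alpha)$ from Lemma \ref{lemzetaall} with the defining equation \eqref{alq} of $\alpha_1$, which I take to be of the form $d\,\alpha_1 \tilde m(\alpha_1) = s/n$ for a positive constant $d$ (modeled on \eqref{def:tilde:al}). The asymptotics $\tilde m(\alpha) \asymp \zeta(\alpha)^{-\delta}/\delta$ from Lemma \ref{lem-momgf} will convert this defining equation into the implicit relation $\alpha_1 \asymp (s/n)\zeta(\alpha_1)^\delta$, which I invert by bootstrapping to express $\log(1/\alpha_1)$ sharply in terms of $L := \log(n/s)$, and then translate back to the desired bounds on $\zeta(\alpha_1)^2$.

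In detail, taking logs in the implicit relation gives $\log(1/\alpha_1) = L - \delta\log\zeta(\alpha_1) + O(1)$. A first crude use of Lemma \ref{lemzetaall} yields $\zeta(\alpha_1)^2 = 2L + O(\log L)$, hence $\log\zeta(\alpha_1) = \tfrac12\log L + O(1)$, which substituted back produces the refined estimate
\[
\log(1/\alpha_1) = L - (\delta/2)\log L + O(1).
\]
Plugging this into the upper side of Lemma \ref{lemzetaall} produces the key cancellation
\[
\zeta(\alpha_1)^2 \le 2\log(1/\alpha_1) + (1+\delta)\log\log(1/\alpha_1) + C_2 = 2L - \delta\log L + (1+\delta)\log L + O(1),
\]
which simplifies to $2L + \log L + O(1)$, matching the stated upper bound after writing $\log L \le \log(1+\log(n/s)) + O(1)$. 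The lower bound is obtained in the same manner from the lower side of Lemma \ref{lemzetaall} applied to the same expansion of $\log(1/\alpha_1)$, with the constant $C_1$ absorbing the bounded residual terms (and any logarithmic slack, which is present only at a lower order relative to $2L$).

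The bounds for $\tau(\alpha_1)^2$ and $t(\alpha_1)^2$ will then follow from the threshold comparisons recalled in Section \ref{sec: gen_prop_thres}: one has $\tau(\alpha) \le t(\alpha) \le \zeta(\alpha)$, and the reverse inequality $\bef(\zeta(\alpha)) \le 2 + \bef(\tau(\alpha))$ translates, via monotonicity of $\bef$ and the tail estimate $\bef(x) \asymp x^{-1-\delta} e^{x^2/2}$ coming from Lemma \ref{lem-gf}, into an additive shift by $O(1)$ on the squared thresholds; an analogous comparison applies between $\zeta$ and $t$. The tilde versions with $\tilde\alpha_1, \tilde\alpha_2$ are handled by repeating the argument verbatim with $\tilde s$ in place of $s$: the only extra input required is $\tilde s \to \infty$, so that $\tilde\alpha_i \to 0$ and Lemmas \ref{lem-momgf}--\ref{lemzetaall} remain applicable, which is guaranteed by the standing assumption $\tilde s \ge \log^2 n$ embedded in the excessive-bias condition.

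The main technical subtlety I anticipate is the cancellation step that fixes the exact coefficient $1$ in front of $\log L$ in the upper bound: one must track precisely how the $-\delta\log L$ correction coming from inverting the defining equation of $\alpha_1$ meets the $+(1+\delta)\log L$ portion of $(1+\delta)\log\log(1/\alpha_1)$ arising in Lemma \ref{lemzetaall}. Once this cancellation is carried out carefully, the remainder of the proof reduces to routine bookkeeping of the absorbed $O(1)$ constants and of the implicit dependence on $\delta$ and $d$.
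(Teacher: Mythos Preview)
Your approach is the same as the paper's---combine the defining equation $d\alpha_1\tilde m(\alpha_1)=s/n$, the estimate $\tilde m(\alpha)\asymp\zeta(\alpha)^{-\delta}$ from Lemma~\ref{lem-momgf}, and the bounds on $\zeta(\alpha)^2$ from Lemma~\ref{lemzetaall}---and your upper-bound cancellation is carried out correctly.

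There is, however, a genuine gap in your lower-bound step. Lemma~\ref{lemzetaall} as \emph{stated} only gives $\zeta(\alpha)^2\ge 2\log(1/\alpha)+C_1'$; it does \emph{not} carry the $(1+\delta)\log\log(1/\alpha)$ term on the lower side. Substituting your expansion $\log(1/\alpha_1)=L-(\delta/2)\log L+O(1)$ therefore only yields
\[
\zeta_1^2\;\ge\;2L-\delta\log L+O(1),
\]
and the $-\delta\log L$ term is unbounded, so it cannot be ``absorbed into $C_1$'' as you write. Your parenthetical about logarithmic slack being lower order than $2L$ is true but irrelevant: the claim in Lemma~\ref{lemz} is $\zeta_1^2\ge 2L+C_1$ for a fixed constant, not merely $\zeta_1^2\sim 2L$.

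The fix is short. Either observe that the \emph{proof} of Lemma~\ref{lemzetaall} (via $\phi(\zeta)\le \alpha g(\zeta^*)$ and $g(\zeta^*)\asymp(\zeta^*)^{-1-\delta}$) in fact delivers the sharp two-sided estimate $\zeta(\alpha)^2=2\log(1/\alpha)+(1+\delta)\log\log(1/\alpha)+O(1)$, after which your cancellation $(1+\delta)-\delta=1$ applies symmetrically to the lower side; or bypass Lemma~\ref{lemzetaall} entirely by noting that $\phi(\zeta_1)\asymp\alpha_1 g(\zeta_1)$ together with $\alpha_1\tilde m(\alpha_1)\asymp s/n$ and $\tilde m(\alpha_1)\asymp\zeta_1 g(\zeta_1)$ gives $\zeta_1\phi(\zeta_1)\asymp s/n$, whence $\zeta_1^2=2\log(n/s)+2\log\zeta_1+O(1)=2\log(n/s)+\log\log(n/s)+O(1)$ directly. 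The paper's proof is terse enough that it does not make this explicit either, but one of these two observations is needed to justify the lower bound as stated.
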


\begin{lem} \label{lem-lbv}
There exists $c>0$ such that for any $q>0$ and $\mu\in\mathbb{R}$, and any $x\in\RR$,
\[ \int |u-\mu|^q \ga_x(u) du \ge c(1+|x-\mu|^q).\]
\end{lem}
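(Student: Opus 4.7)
My plan is to exhibit, for each pair $(x,\mu)$, a length-one interval $R=R(x,\mu)\subset\RR$ such that (i) the $\ga_x$--mass $\ga_x(R)$ is bounded below by a universal constant $c_0>0$, and (ii) $|u-\mu|\ge 2+|x-\mu|$ for every $u\in R$. Given such an $R$, the elementary inequality $(2+a)^q\ge\max(1,a^q)\ge (1+a^q)/2$, valid for all $a\ge 0$ and $q>0$, immediately yields
\[
\int |u-\mu|^q\ga_x(u)\,du \;\ge\; (2+|x-\mu|)^q\,\ga_x(R) \;\ge\; \tfrac{c_0}{2}\bigl(1+|x-\mu|^q\bigr),
\]
which is the desired bound with $c=c_0/2$ independent of $q,\mu,x$.

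By the symmetry $\ga_x(u)=\ga_{-x}(-u)$ (coming from the evenness of $\ga$ and $\phi$, which makes the statement invariant under $(x,\mu)\mapsto(-x,-\mu)$), it suffices to treat $x\ge 0$. I then set $R=[x+2,x+3]$ whenever $\mu\le x$, and $R=[x-3,x-2]$ whenever $\mu>x$. Property (ii) is immediate in each case: one writes $u-\mu=(u-x)+(x-\mu)\ge 2+|x-\mu|$ in the first case, and $\mu-u=(\mu-x)+(x-u)\ge |x-\mu|+2$ in the second, using that the two summands have the same sign and that the one involving $u-x$ has absolute value at least $2$.

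To verify (i), observe that $|x-u|\le 3$ on $R$ gives $\phi(x-u)\ge \phi(3)$, so it suffices to bound $\ga(u)/\gf(x)$ from below uniformly. For $x\in [0,M]$ with a fixed $M\ge 3$, $\gf(x)\le \|\phi\|_\infty=1/\sqrt{2\pi}$ while $\ga$ is continuous and strictly positive on the compact set $[-3,M+3]$, so the ratio is bounded below. For $x>M$ with $M$ chosen large enough, Lemma \ref{lem-gf} gives $\gf(x)\asymp\ga(x)\asymp (1+x)^{-1-\delta}$, and the explicit form \eqref{dens-hv} then yields $\ga(u)/\ga(x)\ge \bigl((1+x)/(4+x)\bigr)^{1+\delta}\ge 4^{-(1+\delta)}$ on $[x+2,x+3]$, and $\ga(u)/\ga(x)\ge 1$ on $[x-3,x-2]$ (since there $0\le u\le x$). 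In every case $\ga_x(R)\ge c_0>0$ uniformly in $x\ge 0$.

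The main (and only) subtlety is securing the uniform mass bound $\ga_x(R)\ge c_0$ across all $x\ge 0$, which requires splitting between a compactness argument for bounded $x$ and the tail asymptotics of Lemma \ref{lem-gf} for large $x$. Once this is in hand, the conclusion is immediate from the opening display.
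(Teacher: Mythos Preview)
Your argument is correct. The core idea---locating a length-one interval near $x$ on which $\ga_x$ has uniformly positive mass and on which $|u-\mu|$ dominates $|x-\mu|$---is the same mechanism the paper uses, but your organization is different and in one respect sharper. The paper splits according to the size of $|x-\mu|$: for $x-\mu\ge 10$ it integrates over $[x-5,x+5]$ and picks up a factor $2^{-q}$ (so the constant there is not uniform in $q$, though this is harmless in the paper since only $q\in(0,2]$ is needed); for $0\le x-\mu\le 10$ it integrates over $[\mu+1,\mu+2]$ and compares $\phi(x-u)$ with $\phi(\mu-u)$. You instead split according to the sign of $x-\mu$ and always take the interval on the far side of $x$ from $\mu$, which makes the lower bound $|u-\mu|\ge 2+|x-\mu|$ automatic and, via $(2+a)^q\ge\tfrac12(1+a^q)$, yields a constant genuinely independent of $q$. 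The only place requiring care is the uniform lower bound on $\ga_x(R)$, and your compactness/tail-asymptotics split (invoking Lemma~\ref{lem-gf} for large $x$) handles it cleanly. Nice streamlining.
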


\begin{lem}\label{lem: help:adapt:1q}
Let $q\in(0,4]$ and suppose $\delta<q\wedge 2$, with $\delta$ as in \eqref{dens-hv}.
There exists a constant $C>0$ such that for a set $R_t\subseteq\{i:\, |\theta_{0,i}|\leq t\}$ and $t\geq0$ arbitrary, we have, with $P_{\theta_0}$-probability tending to one
\begin{align}\label{eq: lem:help1:Lq}
\sum_{ i\in R_t,|X_i|\le \ta(\al)} \frac{\gf}{\phi}(X_i) (|X_i|^q+1)\leq
\begin{cases}
C |R_t| \tau(\al)^{q-\delta} & \text{for $t\leq 1/\tau(\al)$}\\
 C |R_t| e^{\tau(\al)^2/4}\tau(\al)^{q-\delta}\left[1+ M_n \frac{e^{\tau(\al)^2/8}}{|R_t|^{1/2} \tau(\al)^{1/2}}\right] & \text{for $t\leq \tau(\al)/4$} 
\end{cases}
\end{align} 
where $M_n$ is an arbitrary sequence such that $M_n\rightarrow\infty$.
\end{lem}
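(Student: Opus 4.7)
The plan is to write $S = \sum_{i\in R_t} Z_i$ with $Z_i = (\gf/\phi)(X_i)(|X_i|^q+1)\1_{|X_i|\le \ta(\al)}$. Since the $X_i = \te_{0,i}+\veps_i$ are independent, so are the $Z_i$, and $S$ will be controlled via mean-and-variance estimates combined with Markov (case 1) and Chebyshev (case 2).

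First I would compute $E[Z_i]$. Using $\phi(x-\mu)/\phi(x) = e^{\mu x - \mu^2/2}$,
\begin{equation*}
E[Z_i] \,=\, \int_{-\ta(\al)}^{\ta(\al)} \gf(x)(|x|^q+1)\, e^{\te_{0,i} x - \te_{0,i}^2/2}\,dx.
\end{equation*}
On $|x|\le \ta(\al)$, the exponential factor is at most $e$ when $|\te_{0,i}|\le 1/\ta(\al)$ and at most $e^{\ta(\al)^2/4}$ when $|\te_{0,i}|\le \ta(\al)/4$. The remaining integral is estimated using Lemma \ref{lem-gf}, $\gf(x) \asymp (1+|x|)^{-1-\delta}$, together with the assumption $q>\delta$:
\begin{equation*}
\int_0^{\ta(\al)} (1+x)^{-1-\delta}(x^q+1)\,dx \,\lesssim\, \ta(\al)^{q-\delta}/(q-\delta).
\end{equation*}
Summing over $R_t$ yields $E[S]\lesssim |R_t|\ta(\al)^{q-\delta}$ in case 1 and $E[S]\lesssim |R_t|e^{\ta(\al)^2/4}\ta(\al)^{q-\delta}$ in case 2.

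For case 1, Markov's inequality applied to $S$ then delivers the announced bound (with the constant absorbing the probability level). For case 2, the deterministic part of the bound is the mean estimate above, and one still has to produce the fluctuation term carrying $M_n$. To this end I would compute
\begin{equation*}
E[Z_i^2] \,=\, \int_{-\ta(\al)}^{\ta(\al)} \frac{\gf^2}{\phi}(x)(|x|^q+1)^2\, e^{\te_{0,i}x-\te_{0,i}^2/2}\,dx,
\end{equation*}
and use the key inequality $\gf^2/\phi \le \gf/\al$ on $|x|\le \ta(\al)$, which comes from monotonicity of $\gf/\phi$ and the identity $(\gf/\phi)(\ta(\al)) = (1-\al)/\al$. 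Combined with the asymptotic $\al^{-1}\asymp (\gf/\phi)(\ta(\al))\asymp e^{\ta(\al)^2/2}\ta(\al)^{-1-\delta}$ furnished by Lemmas \ref{lem-gf}--\ref{lemzetaall}, and with the same integration over $\gf$ as above (but now against $(|x|^q+1)^2$), this gives
\begin{equation*}
E[Z_i^2] \,\lesssim\, e^{\ta(\al)^2/4}\ta(\al)^{2q-\delta}/\al \,\asymp\, e^{3\ta(\al)^2/4}\ta(\al)^{2q-2\delta-1}.
\end{equation*}
Independence then yields $\sqrt{\mathrm{Var}(S)} \le |R_t|^{1/2}e^{3\ta(\al)^2/8}\ta(\al)^{q-\delta-1/2}$, and Chebyshev's inequality gives $S \le E[S] + M_n\sqrt{\mathrm{Var}(S)}$ with probability at least $1 - 1/M_n^2 \to 1$. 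Factoring out $|R_t|e^{\ta(\al)^2/4}\ta(\al)^{q-\delta}$ produces exactly the $(1 + M_n e^{\ta(\al)^2/8}/(|R_t|^{1/2}\ta(\al)^{1/2}))$ inflation claimed in case 2.

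The main obstacle is the variance computation, in two respects. First, the crude bound $\gf^2/\phi \le \gf/\al$ must be combined with the sharp asymptotic $1/\al \asymp e^{\ta(\al)^2/2}\ta(\al)^{-1-\delta}$ in order to extract exactly the exponent $3\ta(\al)^2/4$ and the correct power $\ta(\al)^{2q-2\delta-1}$ of the threshold; any looser manipulation of $1/\al$ loses the precise form of the $M_n$-term. Second, the shifted-Gaussian factor $e^{\mu x - \mu^2/2}$ must be tracked carefully in both mean and variance: it is harmless in case 1 (contributing only a constant) but produces the $e^{\ta(\al)^2/4}$ inflation in case 2, and it is this factor that drives the qualitatively different bounds in the two regimes.
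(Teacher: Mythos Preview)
Your approach is essentially the paper's: bound the mean and variance of the sum of independent terms $Z_i=(\gf/\phi)(X_i)(|X_i|^q+1)\1_{|X_i|\le\ta(\al)}$ via the identity $\phi(x-\mu)/\phi(x)=e^{\mu x-\mu^2/2}$, then conclude by a concentration inequality. Your variance computation differs cosmetically---you bound $\gf^2/\phi\le \gf/\al$ on $|x|\le\ta(\al)$ and then insert the asymptotic $1/\al\asymp e^{\ta(\al)^2/2}\ta(\al)^{-1-\delta}$, whereas the paper integrates $\gf^2/\phi$ directly---but both routes land on the same bound $\text{Var}\lesssim |R_t|\,e^{t\ta(\al)+\ta(\al)^2/2}\ta(\al)^{2q-1-2\delta}$, and your factoring in case~2 is correct.

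The one genuine gap is your treatment of case~1. Markov's inequality on the nonnegative sum $S$ only gives $P(S>C\,E[S])\le 1/C$, so for a \emph{fixed} constant $C$ the failure probability is bounded away from zero and does not tend to zero with $n$. The lemma, however, asserts the bound holds ``with $P_{\theta_0}$-probability tending to one'' for a fixed $C$; this requires concentration, not just a first-moment bound. The paper applies Chebyshev in \emph{both} cases: in case~1 the same variance estimate you derived for case~2 applies with $e^{t\ta(\al)}\le e$, giving a fluctuation term $M_n|R_t|^{1/2}e^{\ta(\al)^2/4}\ta(\al)^{q-\delta-1/2}$ which is absorbed into $C|R_t|\ta(\al)^{q-\delta}$ in the regimes where the lemma is invoked (there $|R_t|$ is of order $n$ and $e^{\ta(\al)^2/2}\asymp n/\tilde s$, so the ratio vanishes). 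Simply replacing your Markov step by the Chebyshev argument you already carry out for case~2 closes the gap.
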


\begin{lem}\label{lem: help:adapt:3q}
Let $q\in(0,4]$. Let $t_1\ge 0$ and $t_2\ge\max\{2,2t_1\}$. For $R_{t_1}\subseteq\{i:\, |\theta_{0,i}| \leq t_1\}$, we have
with $P_{\theta_0}$-probability tending to one that
\begin{align*} 
\sum_{i\in R_{t_1}} (|X_i|^q+1)\1_{|X_i|>t_2}\leq \tilde{c}_0  |R_{t_1}| t_2^{q-1} e^{-(t_2-t_1)^2/2}+ M_n t_2^{(q+1)/2} (|R_{t_1}|e^{-(t_2-t_1)^2/2})^{1/2},
\end{align*}
for arbitrary $M_n\rightarrow\infty$ and $\tilde{c}_0= (2^{q}\vee2) [4((2-q)\vee 1)+2]/\sqrt{2\pi}$.
\end{lem}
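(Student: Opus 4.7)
The plan is to apply Chebyshev's inequality to the independent sum $S := \sum_{i \in R_{t_1}} Y_i$ with $Y_i := (|X_i|^q + 1)\1_{|X_i| > t_2}$: the first summand in the claimed bound will serve as an upper bound on $E[S]$, while the second will come from controlling $M_n\sqrt{\text{Var}(S)}$. Independence follows from that of the $X_i$'s, so $\text{Var}(S) = \sum_i \text{Var}(Y_i) \le \sum_i E[Y_i^2]$. The key elementary observation, used throughout, is that for $i \in R_{t_1}$ one has $\{|X_i|>t_2\}\subset\{|\varepsilon_i|>t_2-t_1\}$ and, on that event, $|X_i|\le t_1+|\varepsilon_i|\le 2|\varepsilon_i|$, using $t_2-t_1\ge t_1$.

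For the mean, symmetry of $\varepsilon_i$ and the shift $u=x-|\theta_{0,i}|$ give
\[
E[Y_i] \,\le\, 2\int_{t_2-t_1}^{\infty}\bigl((u+t_1)^q+1\bigr)\phi(u)\,du \,\le\, 2^{q+1}\!\int_{t_2-t_1}^{\infty} u^q\phi(u)\,du + 2\bar\Phi(t_2-t_1).
\]
Iterating the identity $\int_a^\infty u^k\phi(u)\,du = a^{k-1}\phi(a) + (k-1)\int_a^\infty u^{k-2}\phi(u)\,du$ at most twice, together with the Mills-ratio bound $\bar\Phi(a)\le\phi(a)/a$, yields $\int_a^\infty u^q\phi(u)\,du \le [2((2-q)\vee 1)+1]\, a^{q-1}\phi(a)$ for $a\ge 1$. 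Exploiting $t_2-t_1\ge t_2/2$ to pass from $(t_2-t_1)^{q-1}$ to $t_2^{q-1}$ up to universal constants and summing over $i$ then produces $E[S] \le \tilde c_0|R_{t_1}|t_2^{q-1}e^{-(t_2-t_1)^2/2}$.

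For the variance, $|X_i|>t_2\ge 2$ forces $|X_i|^q\ge 1$, hence $(|X_i|^q+1)^2 \le 4|X_i|^{2q} \le 4\cdot 2^{2q}\varepsilon_i^{2q}$ on the relevant event. The same tail computation with exponent $2q$ gives $E[Y_i^2] \le C\, t_2^{2q-1}\phi(t_2-t_1)$, so $\text{Var}(S) \le C|R_{t_1}|t_2^{2q-1}\phi(t_2-t_1)$. Chebyshev's inequality $P(|S-E[S]|\ge M_n\sqrt{\text{Var}(S)}) \le M_n^{-2}\to 0$ then gives, with $P_{\theta_0}$-probability tending to one,
\[
S \,\le\, \tilde c_0|R_{t_1}|t_2^{q-1}e^{-(t_2-t_1)^2/2} + C'M_n\,t_2^{\,q-1/2}\bigl(|R_{t_1}|e^{-(t_2-t_1)^2/2}\bigr)^{1/2}.
\]
Since $t_2\ge 2$, for $q\le 2$ we have $t_2^{q-1/2}\le t_2^{(q+1)/2}$ and $C'$ is absorbed into the arbitrarily slow $M_n\to\infty$, recovering the claimed bound.

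\textbf{Main obstacle.} The only genuinely delicate part is the constant bookkeeping to match precisely $\tilde c_0 = (2^q\vee 2)[4((2-q)\vee 1)+2]/\sqrt{2\pi}$: the $2^q$ comes from $(u+t_1)^q\le (2u)^q$ on the relevant event, the $(2-q)\vee 1$ factor records how many integration-by-parts steps are needed before the polynomial exponent becomes non-positive, and the $1/\sqrt{2\pi}$ is simply the normalisation of $\phi$. For $q\in(2,4]$ the bound $t_2^{q-1/2}$ exceeds $t_2^{(q+1)/2}$, so a sharper tail inequality (for instance a Bernstein-type argument after truncating $|X_i|$ at a suitable level) would be needed in that range; within $q\in(0,2]$, which is the regime used in applications of this lemma, the plain Chebyshev argument above suffices.
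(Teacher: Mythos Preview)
Your approach is essentially identical to the paper's: bound the mean and variance of the sum and apply Chebyshev. The paper's decomposition differs only cosmetically --- it first uses $|X_i|^q+1\le 2|X_i|^q$ on $\{|X_i|>t_2\}$, then Lemma~\ref{lempq} to get $|X_i|^q\le(2^{q-1}\vee1)(t_1^q+|\varepsilon_i|^q)$, and finally Lemma~\ref{lemphi} (whose constant is $\bar c=(2-q)\vee1$, sharper than the $2\bar c+1$ your hand computation yields) to land exactly on $\tilde c_0$. Your observation about $q\in(2,4]$ is well taken: the paper's own proof also concludes with the exponent $t_2^{q-1/2}$, which is bounded by $t_2^{(q+1)/2}$ only for $q\le 2$; in the applications within the main text the lemma is used with $q\le 2$, and where it is reused with exponent $2q\le 4$ in the supplement the slightly larger power $q-1/2$ would still be harmless.
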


\begin{lem}\label{lem: help:adapt:2q}
Under the excessive-bias restriction EB(q) \eqref{condition: EB_q} the size of the set $R_t=\{i:\, t\leq |\theta_{0,i}| \}$ is bounded from above by $c_0(C_qD_qt^{-q}\log^{q/2} (n/\tilde{s})+1)\tilde{s}$, with $\tilde{s}$ as in \eqref{def: equiv:tilde:s}.
\end{lem}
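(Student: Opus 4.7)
The plan is to split $R_t = \{i:|\theta_{0,i}|\ge t\}$ at the natural threshold $T_q := A\sqrt{2\log(n/\ell_q)}$, where $\ell_q$ denotes the smallest integer $\ell \ge \log^2 n$ realising the condition $EB(q)$ in \eqref{condition: EB_q} for $\theta_0$. Then $\tilde s_q := |\{i: |\theta_{0,i}| \ge T_q\}|$ is precisely the effective sparsity attached to $EB(q)$, and I would write
\[
|R_t| \, \le \, |\{i: |\theta_{0,i}| \ge T_q\}| \, + \, |\{i: t \le |\theta_{0,i}| < T_q\}|,
\]
with the second term interpreted as zero whenever $t > T_q$ (so the case $t>T_q$ is automatically handled).

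The first piece equals $\tilde s_q$, which Lemma \ref{lem: effective:sparsity} bounds by $c_0 \tilde s$. For the second piece, a Markov-type argument gives
\[
t^q \, |\{i: t \le |\theta_{0,i}| < T_q\}| \, \le \, \sum_{i:|\theta_{0,i}|<T_q} |\theta_{0,i}|^q \, \le \, D_q \, \ell_q \log^{q/2}(n/\ell_q),
\]
invoking the energy bound in \eqref{condition: EB_q}. What remains is the key step: upcasting this $\ell_q$-indexed bound into the desired $\tilde s$-indexed one.

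For that upcast I would use two ingredients: (i) the second inequality in $EB(q)$ gives $\ell_q \le C_q \tilde s_q$, hence $\ell_q \le c_0 C_q \tilde s$ by Lemma \ref{lem: effective:sparsity}; and (ii) the function $f(\ell) := \ell \log^{q/2}(n/\ell)$ is monotone increasing on $[\log^2 n, n/e^{q/2}]$ (from a direct computation of $f'$), a range which safely contains $\ell_q$ since $\tilde s \le s = o(n)$ forces $\ell_q = o(n)$. Monotonicity then gives $f(\ell_q) \le f(c_0 C_q \tilde s)$, and assuming $c_0 C_q \ge 1$ (WLOG since $C_q \ge 1$ in the EB condition and $c_0 \ge 1$ in Lemma \ref{lem: effective:sparsity}), one has $\log^{q/2}(n/(c_0 C_q \tilde s)) \le \log^{q/2}(n/\tilde s)$, yielding $\ell_q \log^{q/2}(n/\ell_q) \le c_0 C_q \, \tilde s \log^{q/2}(n/\tilde s)$. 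Assembling the two pieces produces exactly $|R_t| \le c_0(C_q D_q t^{-q} \log^{q/2}(n/\tilde s) + 1)\tilde s$.

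The main obstacle is not computational but one of constants-bookkeeping: every multiplicative slippage picked up along the way (from Lemma \ref{lem: effective:sparsity}, from the monotonicity step, and from the $c_0 C_q \ge 1$ arrangement) must be foldable into the single constant $c_0$ appearing in the statement; this may require enlarging $c_0$ once and for all in Lemma \ref{lem: effective:sparsity} so the same symbol serves both lemmas. Once this bookkeeping is settled, everything else is a direct consequence of the EB$(q)$ inequalities and elementary monotonicity.
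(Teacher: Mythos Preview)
Your proof is correct and follows essentially the same approach as the paper's: split $R_t$ at the threshold $A\sqrt{2\log(n/\ell)}$, bound the large-signal piece by $\tilde s_q\le c_0\tilde s$ via Lemma~\ref{lem: effective:sparsity}, bound the small-signal piece by Markov together with the $EB(q)$ energy inequality, and then upcast $\ell_q\log^{q/2}(n/\ell_q)$ to $c_0C_q\tilde s\log^{q/2}(n/\tilde s)$ using $\ell_q\le C_qc_0\tilde s$ and monotonicity of $\ell\mapsto\ell\log^{q/2}(n/\ell)$. Your identification of the correct monotone function $\ell\log^{q/2}(n/\ell)$ (the paper invokes $\ell\log(n/\ell)$, which is morally the same) and your constants-bookkeeping remark are both accurate; the $c_0$ here is indeed the same as in Lemma~\ref{lem: effective:sparsity}, so no enlargement is needed.
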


\section{Proofs for the excessive-bias assumption}\label{sec: proof:eb}

\begin{proof}[Proof of Lemma \ref{lem:compa}]
First we show that for every $i=1,...,\log_2 s-1$ and $c>0$
\begin{align}
\cT[s_i,s_{i+1};c]\subset \Theta_0^2[s;\sqrt{c/2},1,c].\label{eq: inclusion1}
\end{align}
Take any $\theta\in \cT[s_i,s_{i+1};c]$ and note that $\theta_{(s_i+1)}^2\geq c\log n$ (where $\theta_{(j)}$ denotes the $j$th decreasingly ordered value of the parameter of interest). Let us denote by $I$ the largest index satisfying $\theta_{(I)}^2\geq c \log (n/s_i)$ and note that $I\in\{s_i+1,...,s_{i+1}\}$. Then one can also see that this $I$ satisfies the condition \eqref{condition: EB}, since $\big|\{j:\, \theta_{j}^2\geq  c \log (n/I)\}\big|\geq \big|\{j:\, \theta_{j}^2\geq  c \log (n/s_1)\}\big| = I$ and $\sum_{|\theta_j|\leq \sqrt{c\log (n/I)}}\theta_i^2\leq   (s_2-I)c\log(n/I)\le s_1c\log(n/s_1)$, for $I\leq n/e$, using $s_2\le 2 s_1$ and $I\ge s_1+1$. 
 To show the strict inclusion, let $\te_0$ be defined as
\begin{align*}
\theta_{0,j}^2=
\begin{cases}
n, & \text{for}\, 1\leq j\leq s_i,\\
1, & \text{for}\, s_i+1\leq j \leq 2s_i,\\
0, & \text{for}\, 2s_i+1  \leq j,
\end{cases}
\end{align*}
for any $i=1,...,\log_2 s -1$. Then  $\|\te_0-\ell_0(s_i)\|_2^2\leq \sum_{j=s_i+1}^{n}\te_{0,j}^2= s_i < c2s_i\log n$ so $\te_0\notin \cT[s_i,s_{i+1};c]$ and therefore $\te_0\notin \cT_d[s;c]$. Furthermore, by choosing $\ell=s_i$ we have that $\big|\{j:\ |\theta_{0,j}|\geq \log (n/\ell)\}\big|\geq \ell$ and $\sum_{|\theta_{0,j}|\leq \sqrt{c\log (n/\ell)}}\theta_{0,i}^2= \ell \le c\ell\log (n/\ell)$, satisfying the excessive-bias restriction with parameters given in the lemma .
\end{proof}

\begin{proof}[Proof of Theorem \ref{thm: info:theoretic:bounds}]

First consider the result under condition \eqref{condition: EB_weaker3}. We show below that
\begin{align}
\cT[s_1,s_2;c ]\subset \Theta_0^2[s_3; \sqrt{c/2},1,1]\label{inclusion:EB}
\end{align}
for every $c>0$, and $s_1<s_2\leq s_3=o(\sqrt{n}/\log{n})$, satisfying $s_2<c^{-1}s_1$. This inclusion is similar in spirit to \eqref{eq: inclusion1}, but allows that $s_2\geq 2s_1$ (for $c<1/2$), which was required in $\eqref{eq: inclusion1}$.  It is in particular true for $c=m_n^2=o(1)$ and $s_1,s_2$ as in the theorem, hence $\cT[s_1,s_2;m_n^2] \subset \Theta_0^2[s; m_n/\sqrt{2},1,1]$. Then the proof of the statement simply follows from of Theorem 4(A) of \cite{nickl:geer:2013} (the authors consider the loss $\|\cdot\|_*^2=n\times d_2$, but the same argument goes through with the loss $\|\cdot\|_*^2= d_2$ in the sequence model), where it is shown that it is not possible to construct adaptive confidence sets over the classes $\{\theta\in\ell_0[s_2]:\, \|\theta-\ell_0[s_1]\|_2^2\geq  m_n^2 s_2\log n\}$ and $\ell_0[s_1]$ (more precisely in the proof it is shown that it is not possible to construct a confidence set with size bounded by a multiple of $s_2\log n$ over  $\{\theta\in\ell_0[s_2]:\, \|\theta-\ell_0[s_1]\|_2^2\geq  m_n^2 s_2\log n\}$ and by $s_1\log n$ for $\theta=0$, which completes the proof since the zero signal  satisfies the excessive-bias assumption with $\tilde{s}=0$).

We now prove assertion \eqref{inclusion:EB}, along the lines of $\eqref{eq: inclusion1}$. We highlight here only the differences. Note that in view of the proof of Lemma \ref{lem:compa}, for every $\theta\in \cT[s_1,s_2;c ]$, we have $\sum_{|\theta_i|\leq \sqrt{c\log (n/I)}}\theta_i^2\leq (s_{2}-I)c \log (n/s_2)\leq   I\log (n/I)$ (for $I\leq n/e$), where we use $cs_2<s_1$ and $I\in\{s_1+1,...,s_2\}$.
 
Next we deal with the conditions \eqref{condition: EB_weaker1} and \eqref{condition: EB_weaker2}.  In these cases one can not directly apply the results of \cite{nickl:geer:2013}, since elements of the set $\{\theta\in\ell_0[s_2]:\, \|\theta-\ell_0[s_1]\|_2^2\geq  m_n^2 s_2\log n\}$ will not necessarily satisfy the excessive-bias assumptions \eqref{condition: EB_weaker1} and \eqref{condition: EB_weaker2} (consider for instance a signal with $n-s_2$ and $s_2$ coefficients of size zero and $m_n\sqrt{\log n}$, respectively). The proof of the nonexistence result combines then ideas from  \cite{baraud:2002} and \cite{nickl:geer:2013}, and adapts them to the present setting.

We argue by contradiction. Let us assume that under \eqref{condition: EB_weaker1} or \eqref{condition: EB_weaker2} one can construct confidence sets satisfying assertions \eqref{eq: eb:coverage} and \eqref{eq: eb:adapt}.  
We show below that this would imply the existence of a test $\psi$ such that
\begin{align}
\varlimsup_n \Big(E_{\theta_0}\psi+ \sup_{\te\in B_{0}} E_\te(1-\psi)\Big) \leq\gamma'+ 2\gamma,\label{eq: ub:test:eb}
\end{align}
where the signal $\theta_0=(\theta_{0,1},...,\theta_{0,n})$ and the set $B_0$ respectively are defined by 
\begin{equation*}
\theta_{0,i}=
\begin{cases}
 A  \sqrt{2\log(n/s_1)},&\text{for $i\leq s_1$}\\
0,&\text{else}, 
\end{cases}
\end{equation*}
\[ 
B_{0}=\Big\{\theta\in\ell_0[s_2]:\ \theta_1=...=\theta_{s_1}= A  \sqrt{2\log(n/s_1)},\ \left|\left\{i:\, \theta_i^2= c^2 2\log(n/s_2)\right\}\right|=s_2-s_1\Big\},\]
with $c^2<2\eps/(1+2\eps)<1$, 
i.e. the parameters in $B_{0}$ have signal strength $ A  \sqrt{2\log(n/s_1)}$ in the first $s_1$ coordinates, amongst the rest of the coefficients $s_2-s_1$ is of squared size $2c^2 \log(n/s_2)$, while the rest of the coefficients are zero. Assuming further that $2c^2<1$, note that $\theta_0$ and any $\theta\in B_{0}$ satisfy both of the conditions  \eqref{condition: EB_weaker1} (with $\ell=s_1$ and $C_2=1$) and \eqref{condition: EB_weaker2} (with $\ell=s_2$ and $D_2=1$). We also show below that for any $\gamma_1\in (0,1)$, $\eps_1\in(0,1-\gamma_1)$,
\begin{align}
 \varliminf_n \inf_{\phi_{\gamma_1}} \sup_{\te \in B_{0}} E_\te(1-\phi_{\gamma_1}) \ge \eps_1,\label{eq: lb:test:eb}
\end{align}
where the infimum is taken over every test of level $\gamma_1$. This leads to a contradiction with $\eqref{eq: ub:test:eb}$ (noting that $\gamma,\gamma'<1/3$).

First we verify assertion \eqref{eq: ub:test:eb}, by constructing a test $\psi$ satisfying
\begin{align}
\varlimsup_n E_{\theta_0} \psi & \le \gamma'+\gamma,  \label{m_err1_eb} \\
\varlimsup_n \sup_{\te\in B_{0}} E_\te(1-\psi) & \le \gamma. \label{m_err2_eb}
\end{align}  
Let us consider the test $\psi=\1\{\cC_n(X) \cap B_{0} \neq \emptyset\}$, and using \eqref{eq: eb:coverage},
\[ \varlimsup_n \sup_{\te\in B_{0}} E_\te(1-\psi) 
=\varlimsup_n \sup_{\te\in B_{0}}
P_\te( \cC_n(X)\cap B_{0} = \emptyset ) 
\le \varlimsup_n\sup_{\te\in B_{0}}
P_\te( \te\notin \cC_n(X) ) \le \gamma,
\]
while one also has, using the diameter bound \eqref{eq: eb:adapt} and coverage \eqref{eq: eb:coverage},\begin{align*}
\varlimsup_n E_{\theta_0}\psi & = 
\varlimsup_n P_{\theta_0}({\theta_0}\in \cC_n(X), \cC_n(X)\cap B_{0} \neq \emptyset)+
\varlimsup_n P_{\theta_0}({\theta_0}\notin \cC_n(X), \cC_n(X)\cap B_{0} \neq \emptyset) \\
& \le \varlimsup_n P_{\theta_0}(\text{diam}(\cC_n(X))\ge L s_1 \log(n/s_1)) + \varlimsup_n P_{\theta_0}({\theta_0}\notin \cC_n(X))\le \gamma'+\gamma,
\end{align*}
where in the first inequality we have used that 
$$\inf_{\theta\in B_0}\|\theta_0-\theta\|_2^2\geq 2c^2(s_2-s_1)\log(n/s_2)\gg Ls_1 \log(n/s_1).$$

Hence it remained to verify assertion \eqref{eq: lb:test:eb}. The minimax risk over $B_0$ in \eqref{eq: lb:test:eb} is bounded from below by any Bayes risk for a prior distribution on $B_0$.  Let us define a specific prior $\Pi$ on the set $B_0$ as follows. Let the first $s_1$ coordinates be fixed to the value $A  \sqrt{2\log(n/s_1)}$, and next let $S$ be sampled uniformly at random over subsets of cardinality $s_2-s_1$ among the remaining coordinates $\{s_1+1,...,n\}$. Let $\{\epsilon_j\}$ be i.i.d. Rademacher and, given $S$, set
\[ \te_j=c\sqrt{2\log (n/s_2)} \epsilon_j \qquad \text{if } j\in S,\]
and $\te_k=0$ otherwise. For convenience let us introduce the notation $\lambda=c\sqrt{2\log (n/s_2)}$.  The corresponding marginal likelihood ratio $L_\Pi(Y)=\int (dP_\te/dP_{\theta_0})(Y)d\Pi(\te)$ is
\begin{align*}
L_\Pi(Y)=\frac{1}{{n-s_1\choose s_2-s_1}}\sum_{S\in \mathcal{S}(s_2-s_1,n-s_1)}E_{\epsilon|S}\big[ \exp\big( -(s_2-s_1)\lambda^2/2+ \lambda \sum_{j\in S}\epsilon_j Y_j \big) \big],
\end{align*}
where $ \mathcal{S}(s_2-s_1,n-s_1)$ denotes the subsets of size $s_2-s_1$ of a set of size $n-s_1$ of $\{s_1+1,...,n\}$, and $E_{\epsilon|S}$ denotes the expected value corresponding to the iid Rademacher random variables $\epsilon=\{\epsilon_j:\, j\in S\}$. Let us introduce the notation $K(Y_{s_1+1})=E_{\epsilon|S}[\exp( - \lambda^2/2  +\lambda \epsilon_{j} Y_{j})]$, for some $j\in S$ and
\[a = E_{\theta_0}[K(Y_{s_1+1})^2], \quad b =E_{\theta_0}[K(Y_{s_1+1})].\]
Note that $a,b$ do not depend on $\te_0$, since $\te_{0,s_1+1}=0$.  Then
\begin{align*}
E_{\theta_0}[L_\Pi(Y)^2] &= \frac{1}{{n-s_1\choose s_2-s_1}^2}\sum_{S,S'\in  \mathcal{S}(s_2-s_1,n-s_1)} E_{\theta_0} \Big[\prod_{j\in S}E_{\epsilon|S} \exp(- \lambda^2/2 +\lambda\epsilon_j Y_j )\times\\
&\qquad\qquad \prod_{j\in S'} E_{\epsilon|S'} \exp(- \lambda^2/2  +\lambda\epsilon_j Y_j ) \Big]\\
&=\frac{1}{{n-s_1\choose s_2-s_1}^2}\sum_{S,S'\in  \mathcal{S}(s_2-s_1,n-s_1)} E_{\theta_0} \Big[\prod_{j\in S\cap S'} E_{\epsilon|S\cap S'} \exp(- \lambda^2  +2\lambda\epsilon_j Y_j )\times\\
 &\qquad\qquad\prod_{j\in S\Delta S'} E_{\epsilon|S\Delta S'} \exp(- \lambda^2/2 +\lambda\epsilon_j Y_j )  \Big]\\
&=\frac{1}{{n-s_1\choose s_2-s_1}^2}\sum_{S,S'\in  \mathcal{S}(s_2-s_1,n-s_1)} a^{|S\cap S'|}b^{|S\Delta S'|}
=b^{2(s_2-s_1)}\sum_{j=1}^{s_2-s_1} (a/b^2)^j p_{j,s_2-s_1,n-s_1},
\end{align*}
where  $A\Delta B = \{A\backslash B\}\cup \{B\backslash A\}$ and  $p_{j,s_2-s_1,n-s_1}={n-s_1\choose s_2-s_1}^{-2}|\{(S,S')\in \mathcal{S}(s_2-s_1,n-s_1)^2:\, |S\cap S'|=j \} |$. One can notice that the random variable $X$ satisfying $P(X=j)=p_{j,s_2-s_1,n-s_1}$ has a hypergeometric distribution with parameters $n-s_1,s_2-s_1$ and $(s_2-s_1)/(n-s_1)$ and the right hand side of the preceding display can be written as $b^{2(s_2-s_1)} E[(a/b^2)^X]$. Then in view of (27) of \cite{baraud:2002} (with $\cosh(\lambda^2)$ replaced by $a/b^2$ and $k$ by $s_2-s_1$), one obtains 
\begin{align} 
E_{\theta_0}[L_\Pi(Y)^2] &\leq b^{2(s_2-s_1)} \exp\left((s_2-s_1)\log\Big[1 +\frac{s_2-s_1}{n-s_1}(\frac{a}{b^2}-1)\Big]\right)\nonumber\\
&\leq b^{2(s_2-s_1)} \exp\left(\frac{(s_2-s_1)^2}{n-s_1}(\frac{a}{b^2}-1)\right).\label{eq: ub:likelihood:ratio}
\end{align}
Note that in view of $E_{\theta_0}\cosh(\lambda Y_{s_1+1})=e^{\lambda^2/2}$ for any $\lambda\in\mathbb{R}$ we have
\begin{align*}
b&= e^{-\lambda^2/2}E_{\theta_0}\cosh(\lambda Y_{s_1+1})=1.
\end{align*}
Furthermore, in view of $E_{\theta_0}\cosh^2(\lambda Y_{s_1+1})=e^{\lambda^2}\cosh(\lambda^2)$
\begin{align*}
a=E_{\theta_0}\big(e^{-\lambda^2/2}\cosh(\lambda Y_{s_1+1})\big)^2=\cosh(\lambda^2)
= (1/2+o(1))(n/s_2)^{2c^2}.
\end{align*}
By noting that $s_1=o(s_2)$ and $s_2\leq n^{1/2-\eps}$ and substituting the preceding two displays into \eqref{eq: ub:likelihood:ratio} one gets, 
using $(s_2-s_1)/(n-s_1)\le s_2/n$ and  $1+o(1)\le 2$ for large enough 
$n$,
\begin{align} 
E_{\theta_0}[L_\Pi(Y)^2] &\leq \exp\Big( \frac{s_2^2}{n}(\frac{n^{2c^2}}{2s_2^{2c^2}}-1)(1+o(1)) \Big)\nonumber\\
&\leq 1+s_2^{2-2c^2}n^{2c^2-1}\leq 1+n^{c^2(1+2\veps)-2\veps}=1+o(1).\label{eq: ub:loglike}
\end{align}
Finally, in view of (24)  of \cite{baraud:2002} (and the display below (24)  in \cite{baraud:2002}), we obtain that
\begin{align*}
\inf_{\phi_{\gamma_1}} \sup_{\te \in B_{0}} E_\te(1-\phi_{\gamma_1})&\geq \inf_{\phi_{\gamma_1}}\int_{B_0} E_{\theta} (1-\phi_{\gamma_1})\Pi(d\theta)\\
&\geq 1-\gamma_1- \|\int_{B_0} P_{\theta}\Pi(d\theta)-P_{\theta_0} \|_{TV}/2\\
&\geq 1-\gamma_1- (E_{\theta_0}[L_\Pi(Y)^2]-1)^{1/2}/2 \geq 1-\gamma_1-o(1),
\end{align*}
where the last display follows from assertion \eqref{eq: ub:loglike}, concluding the proof of assertion \eqref{eq: lb:test:eb} and therefore the proof of the theorem.
\end{proof}

%

\pagebreak

\section{Supplementary material}

This supplement is structured as follows. In Section \ref{supp-lb}, we formally prove the impossibility of existence of adaptive credible sets in the sparse sequence model without further constraints on sparse vectors. The result is close in spirit to the one stated in \cite{nickl:geer:2013}, but the model is different here; we provide a proof  following the testing lower bound in \cite{baraud:2002}.  In Section \ref{supp-thresh}, we give the proofs for the preliminary lemmas on thresholds, moments and risk, while Section \ref{supp-th1} contains the proof of Theorem \ref{thm-risk-dq} of \cite{castillo:szabo:2018:main}. The proof is similar in spirit to that of Theorem 3 in \cite{cm17}, with a few significant differences in terms of estimates, in particular moments in terms of the parameter $\delta$ of the slab prior density.  Sections \ref{sec: bounds_mmle}, \ref{sec: coverage_mmle_q-supp}, and \ref{sec:proofs:technical:credible} contain the proofs of technical results used in the proof of Theorem 3 in \cite{castillo:szabo:2018:main}: the concentration of the empirical Bayes weight estimate, the bound on the diameter of the credible set and a number of technical lemmas, respectively.

\subsection{Lower bounds} \label{supp-lb}
 
In the sequence model, consider the problem of building an adaptive confidence set for the $\ell^2$ norm, in the highly sparse regime where $s=o(\sqrt{n})$. Here we follow the separation approach of Nickl and van de Geer  \cite{nickl:geer:2013} and adapt their result in the sequence model setting. Set, for some sparsity parameters $s_0, s_1$,
\[\tilde{B}_0(s_1,\rho)= \tilde{B}_0(s_1, s_0,\rho) = \{\te\in B_0(s_1),\ \|\te - B_0(s_0)\|\ge \rho \}. \]
We have $\tilde{B}_0(s_1,0)=B_0(s_1)$.

A confidence set that would be adaptive with respect to regularities $s_0, s_1$ would need to verify the following conditions with $\rho_n=0$, for some $L>0$, $\al, \al'>0$, and sparsity parameters $s_0, s_1$,
\begin{align}
\varliminf_n & \inf_{\te\in B_0(s_0)\cup \tilde{B}_0(s_1,\rho_n)} 
P_\te[\te\in \cC_n(X)]   \ge 1-\al, \label{m_coverage}\\
\varlimsup_n & \sup_{\te\in B_0(s_0)} 
P_\te[|\cC_n(X)|^2 >Ls_0\log(n/s_0) ]  \le \al', \label{m_diam0}\\
\varlimsup_n & \sup_{\te\in \tilde{B}_0(s_1,\rho_n)} 
P_\te[|\cC_n(X)|^2 >Ls_1\log(n/s_1) ]  \le \al'.
\label{m_diam1}
\end{align}
\begin{thm} \label{m_cs}
Suppose $s_0=n^{a_0}$ and $s_1=n^{a_1}$ with $0<a_0<a_1<1/2$. Suppose that  for some separation sequence $\rho_n\ge 0$ and some $0<\al,\al'<1/3$, an adaptive confidence set in the sense of \eqref{m_coverage}--\eqref{m_diam1} exists. Then necessarily
\[ \varliminf_{n}\, \frac{\rho_{n}^2}{s_1\log{n/s_1}} > 0. \]
\end{thm}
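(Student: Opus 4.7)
The plan is to argue by contradiction, via a reduction to a Bayes testing lower bound in the spirit of the proof of Theorem \ref{thm: info:theoretic:bounds} above and of the $\chi^2$ argument of Baraud \cite{baraud:2002}. Suppose, contrary to the conclusion, that there is a subsequence along which $\rho_n^2\le \eta_n s_1\log(n/s_1)$ with $\eta_n\to 0$. On this subsequence, define a prior $\Pi$ by drawing a uniformly random subset $S\subset\{1,\dots,n\}$ of size $s_1$ together with independent Rademacher signs $(\epsilon_j)_{j\in S}$, and then setting $\theta_j=\mu\epsilon_j$ for $j\in S$ and $\theta_j=0$ otherwise, where $\mu>0$ is chosen so that
\[
(s_1-s_0)\mu^2>\rho_n^2,\qquad s_1\mu^2>2Ls_0\log(n/s_0),\qquad \mu^2=o(\log(n/s_1)).
\]
These constraints are compatible for large $n$ because $\rho_n^2/s_1=o(\log(n/s_1))$ by assumption and $s_0\log(n/s_0)/s_1=o(\log(n/s_1))$ follows from $a_0<a_1<1/2$. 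By construction every $\theta$ in $\mathrm{supp}(\Pi)$ has $\|\theta-B_0(s_0)\|^2=(s_1-s_0)\mu^2>\rho_n^2$, hence $\mathrm{supp}(\Pi)\subset \tilde B_0(s_1,\rho_n)$, and moreover $\|\theta\|^2=s_1\mu^2>2Ls_0\log(n/s_0)$.

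Set $R_n=\sqrt{Ls_0\log(n/s_0)}$ and consider the test $\psi(X)=\mathbf{1}\{\cC_n(X)\not\subset B(0,R_n)\}$, where $B(0,R_n)$ denotes the closed Euclidean ball. Under $P_0$, the coverage condition \eqref{m_coverage} applied at $\theta_0=0\in B_0(s_0)$ together with the diameter bound \eqref{m_diam0} yield, with probability at least $1-\alpha-\alpha'-o(1)$, that $0\in\cC_n$ and $|\cC_n|^2\le R_n^2$; these jointly imply $\cC_n\subset B(0,R_n)$ and hence $\psi=0$. For any $\theta\in\mathrm{supp}(\Pi)$, coverage \eqref{m_coverage} gives $\theta\in\cC_n$ with probability at least $1-\alpha-o(1)$, and the construction ensures $\|\theta\|>R_n$, forcing $\psi=1$ on this event. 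Integrating against $\Pi$,
\[
E_0\psi+\int E_\theta(1-\psi)\,d\Pi(\theta)\;\le\; 2\alpha+\alpha'+o(1)\;<\;1,
\]
the strict inequality using $\alpha,\alpha'<1/3$.

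The main step is then to match this with the standard Bayes lower bound
\[
E_0\psi+\int E_\theta(1-\psi)\,d\Pi(\theta)\;\ge\; 1-\|P_0-P_\Pi\|_{TV}\;\ge\; 1-\tfrac12\sqrt{E_0[L_\Pi^2]-1},
\]
where $L_\Pi=\int (dP_\theta/dP_0)\,d\Pi(\theta)$. Exactly as in the computation around \eqref{eq: ub:likelihood:ratio} in the proof of Theorem \ref{thm: info:theoretic:bounds}, Baraud's hypergeometric second-moment identity yields $E_0[L_\Pi^2]\le\exp\!\bigl(\tfrac{s_1^2}{n-s_1}(\cosh(\mu^2)-1)\bigr)$. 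With $\mu^2=o(\log(n/s_1))$ and $s_1=n^{a_1}$, $a_1<1/2$, the exponent tends to zero: for $\mu^2\le 1$ use $\cosh(\mu^2)-1\le \mu^4$ and $s_1^2\mu^4/n\le s_1^2/n=n^{2a_1-1}\to 0$; for $\mu^2>1$ write $\mu^2=c_n\log(n/s_1)$ with $c_n\to 0$ and use $\cosh(\mu^2)-1\le e^{\mu^2}=(n/s_1)^{c_n}$, so that $s_1^{2-c_n}n^{c_n-1}=n^{a_1(2-c_n)+c_n-1}\to n^{2a_1-1}\to 0$. Hence $E_0[L_\Pi^2]\to 1$, the Bayes risk is $\ge 1-o(1)$, and we obtain a contradiction with the upper bound $<1$ displayed above. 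The most delicate point in the argument will be this uniform control of $E_0[L_\Pi^2]$ across the two possible scalings of $\mu^2$, together with the observation that the three compatibility conditions on $\mu$ admit a common value for \emph{any} given constant $L$, so that the constant $c_0$ produced by the contradiction depends only on $L,\alpha,\alpha',a_0,a_1$.
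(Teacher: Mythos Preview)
Your proof is correct and follows essentially the same strategy as the paper: reduce to a two-point testing problem between $P_0$ and a mixture $P_\Pi$ supported on $\tilde B_0(s_1,\rho_n)$, then invoke the Baraud $\chi^2$/hypergeometric second-moment bound to show the mixture is indistinguishable from $P_0$. The minor differences are cosmetic: the paper uses the test $\psi=\1\{\cC_n\cap \tilde B_0(s_1,\bar\rho)\neq\emptyset\}$ and an auxiliary scale $\bar\rho$, whereas you use $\psi=\1\{\cC_n\not\subset B(0,R_n)\}$ and encode the separation directly through the amplitude $\mu$; and the paper cites Baraud's Theorem~1 as a black box, while you redo the second-moment computation explicitly (as in the proof of Theorem~\ref{thm: info:theoretic:bounds}). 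Both routes give the same contradiction with $2\alpha+\alpha'<1$.
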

In particular, it follows from Theorem \ref{m_cs} that fully adaptive confidence sets do not exist (if so, one could take $\rho_n=0$ in Theorem \ref{m_cs}) and the result quantifies, if one removes difficult parameters as in $\tilde{B}_0(s_1,\rho_n)$, how much should be removed.

\begin{proof}
Let $\rho^*=\rho_n^*=(s_1\log{(n/s_1)})^{1/2}$. Reasoning by contradiction, suppose a confidence set satisfying \eqref{m_coverage}--\eqref{m_diam1} exists for some sequence $\rho=\rho_n$ such that
\[ \varliminf \frac{\rho}{\rho^*} = 0. \]
One can always assume that the limit exists, otherwise one can argue along a subsequence. One can then find a further sequence $\bar\rho=\bar\rho_n$ such that $\bar\rho =o(\rho^*)$, $\rho\le \bar\rho$ and $s_0\log(n/s_0)=o(\bar\rho^2)$, since $s_0=o(s_1)$.

Assuming this, one first shows that this would imply the existence of a test $\psi$ such that
\begin{align}
 \varlimsup\, E_0\psi & \le \al'+\al,  \label{m_err1} \\
  \varlimsup\, \sup_{\te\in \tilde{B}_0(s_1,\bar{\rho})} E_\te(1-\psi) & \le \al. \label{m_err2}
\end{align}  
Indeed, setting $H_0 = \{\te=0\}$, $H_1=\{\te\in \tilde{B}_0(s_1,\bar\rho)\}$ and $\psi=\1\{\cC_n(X)\cap H_1 \neq \emptyset\}$, and using \eqref{m_coverage},
\[ \sup_{\te\in H_1} E_\te(1-\psi) 
= \sup_{\te\in \tilde{B}_0(s_1,\bar\rho)}
P_\te( \cC_n(X)\cap \tilde{B}_0(s_1,\bar\rho) = \emptyset ) 
\le \sup_{\te\in \tilde{B}_0(s_1,\bar\rho)}
P_\te( \te\notin \cC_n(X) ) \le \al,
\]
while one also has, using \eqref{m_diam0} (combined with $s_0\log(n/s_0)=o(\bar\rho^2)$) and coverage \eqref{m_coverage},
\begin{align*}
E_0\psi & = 
P_0(0\in \cC_n(X), \cC_n(X)\cap H_1 \neq \emptyset)+
P_0(0\notin \cC_n(X), \cC_n(X)\cap H_1 \neq \emptyset) \\
& \le P_0(|\cC_n(X)|\ge \bar \rho) + P_0(0\notin \cC_n(X))\le \al'+\al.
\end{align*}
By Theorem 1 of \cite{baraud:2002}, for any $\al_1\in (0,1)$, $\delta\in(0,1-\al_1]$, and any $0\le k\le n$,
\[ \inf_{\phi_{\al_1}} \sup_{\te \in B_0(k),\ \|\te\|=\rho} E_\te(1-\phi_{\al_1}) \ge \delta,\]
where the infimum is taken over all tests of level $\al_1$, whenever $\rho\le r_{n,k}$, where, if $\eta=2(1-\al_1-\delta)$, $\cL(\eta)=\log(1+\eta^2)$,
\[ r_{n,k}^2 = k \log\left(1+\cL(\eta)\frac{n}{k^2} \right).\]
In particular here we take $k=s_1=n^{a_1}$ and the result holds for $\rho^2\le r_*^2:=  c s_1\log(n/s_1)$ for a suitably small $c=c(a_1)$ (assuming $\al_1+\delta<1/2$ - or far away from $1$ - say). To prove the above testing rate, Baraud \cite{baraud:2002} bounds from below the Bayesian error for the prior $\Pi=\Pi_\rho$ as follows. One first draws $R_1,\ldots,R_n$ independent Rademacher variables and, independently, one selects a subset $S\subset\{1,\ldots,n\}$ uniformly at random among subsets of size $k$. Given $\{R_k\}, S$, one sets  $\te_i=\rho R_i/\sqrt{k}$ if $i\in S$, and $\te_i=0$ otherwise. Baraud then proves, for $\rho\le r_{n,k}$ (that is, when $k=s_1$, for $\rho\le r^*$),
\[ \inf_{\phi_{\al_1}} \int E_\theta(1-\phi_{\al_1}) d\Pi(\theta) \ge 1-\al_1-\eta/2=\delta.
\]
One now notes that $\Pi$ satisfies, for $k=s_1$ and $\rho=2\bar{\rho}=o(r^*)$ (as $\bar \rho=o(\rho^*)$  by assumption), for $n$ large enough, 
\[ \Pi[\tilde B_0(s_1,\bar\rho) ]=\Pi[(s_1-s_0)\rho^2/s_1 \sbl{\ge} \bar\rho^2] =1.\]

Now applying the result by \cite{baraud:2002} above for $k=s_1$ and $\rho=\sbl{2}\bar\rho$ as before leads to, for any test $\psi$ of asymptotic level $\al+\al'$ in the sense of \eqref{m_err1} (so, for $n\ge N_\psi$ large enough, $E_0\psi\le \al+\al'$ so $\psi$ is of level $\al+\al'$ for large $n$), for $n\ge N_\psi$ and $\alpha_1=\alpha+\alpha'$,
\[ \sup_{\te\in \tilde{B}_0(s_1,\bar{\rho})} E_\te(1-\psi) 
\ge \int E_\theta(1-\psi) d\Pi(\theta) \ge \inf_{\phi_{\al_1}} \int E_\theta(1-\phi_{\al_1}) d\Pi(\theta)\ge \delta.
\] 
Taking $\delta= (1+1/10)\al$ (and assuming $(1+1/10)\al\le 1-(\al+\al')$ so that $\delta\le 1-\al_1$)   gives a contradiction with respect to  \eqref{m_err2}.
\end{proof}

\subsection{Proofs for useful thresholds and fixed $\alpha$ risk bounds} \label{supp-thresh}

\begin{proof}[Proof of Lemma \ref{lem-gf}]
The derivative $(\log \ga)'(u)$ is bounded in absolute value for $|u| \ge 1$ by a universal constant (e.g. $2$ if, say, $\delta\le 1$), and $\ga$ is unimodal and symmetric, so the fact that $\gf\asymp\ga$ and the monotonicity of $\gf/\phi$ follow from the proof of Lemma 1 in \cite{js04}, which does not use any moment bound on $\gamma$, so immediately extends to the density \eqref{dens-hv}. The second estimate is immediate using the first one and the fact that $\int_y^\infty u^{-1-\delta}du=y^{-\delta}/\delta$. 
\end{proof}

\begin{proof}[Proof of Lemma \ref{lemzetaall}]
We write $\zeta=\zeta(\al)$ within the proof to simplify the notation.

By definition of $\zeta$, one has $(g/\phi)(\zeta)=\al^{-1}-1\ge \al^{-1}/2$, for $\alpha<1/2$.  Deduce, as $g$ is bounded on $\RR$, that $\phi(\zeta)\le C\al$. Hence taking logarithms $\zeta\ge (2\log\{1/(C\al)\})^{1/2}=:\zeta^*$. As $g$ is decreasing on $\RR^+$, deduce $g(\zeta)\le g(\zeta^*)$. Using the relation above between $g,\phi$ and $\al$, deduce $\phi(\zeta)\le \al g(\zeta^*)$. So, 
\[ \zeta^2 \ge -2\log\{\sqrt{2\pi} \al g(\zeta^*)\}.\]
As $\zeta^*$ goes to infinity as $\al\to 0$, the term $-2\log{g(\zeta^*)}$ becomes larger than $-\log(2\pi)$, which leads to the announced lower bound by taking $\al$ small enough.

From Lemma \ref{lem-gf}, tails of $g$ and $\ga$ are the same, so $g(x)\geqa x^{-1-\delta}$ using \eqref{dens-hv}. 
From this one gets $(\phi/g)(\zeta)\le C\zeta^{1+\delta}\exp\{-\zeta^2/2\}\le C\exp\{-\zeta^2/4\}$ for large enough $\zeta>0$ (by the obtained lower bound on $\zeta$, it suffices that $\alpha$ is small enough). 
Deduce that $\zeta^2/4\le \log\{C(g/\phi)(\zeta)\}$, which gives 
\[ \zeta\le 2\sqrt{  \log\{C(g/\phi)(\zeta)\}} 
= 2\sqrt{\log{C}+\log(1+\al^{-1})}\le 2\sqrt{\log(1/\al)}+C',\]
where we use $\sqrt{a+b}\le \sqrt{a}+\sqrt{b}$ and $\log(1+x)\le 1+\log{x}$ for large $x$.  
Reinserting this back into the second inequality of this paragraph, which can be rewritten as 
$\zeta^2/2\le (1+\delta)\log{\zeta}+\log\{C(\al^{-1}+1)\}$, leads to the result for $\zeta(\al)$.

To prove that the same statement holds for $\tau(\al)$ and $t(\al)$ note that following from the definition of $\tau(\al)$ and $\zeta(\al)$ we have $\tau(\al)\le \zeta(\al)$, as well as, for $\al\le 1/4$,  that $\zeta(2\al)\le \tau(\al)$.
Finally, from page 1622 of \cite{js04} (independent of tails of $\ga$), we have that $\zeta(\al)^2-c\leq t(\al)^2\leq\zeta(\al)^2$.
\end{proof}  

\begin{proof}[Proof of Lemma \ref{lem-momgf}] 
The monotonicity properties follow from the explicit expression of $B(x,\al)$ as well as the fact that $B(\cdot)$ is increasing. 
Symmetry of $\ga$ and $\phi$ imply that $B=(g/\phi)-1$ is symmetric, from which symmetry of $m_1(\ta,\al)=E[ B(Z+\ta,\al) ] = \int B(x,\al)\phi(x-\ta)dx$ (in $\tau$)  follows. Next by dominated convergence, 
\[ \frac{\partial}{\partial\ta} m_1(\ta,\al)
= \int_{-\infty}^\infty \frac{ B'(x+\ta) }{ (1+\al B(x+\ta))^2 } \phi(x)dx =
 \int_0^\infty \frac{ B'(x) }{ (1+\al B(x))^2 } (\phi(x-\ta)-\phi(x+\ta))dx, \]
which is nonnegative for $\ta\ge 0$, as $B'(x)=2\int_0^\infty \text{sinh}(xt) te^{-t^2/2}\ga(t)dt\ge 0$ for $x\ge 0$.

Using that $\int \bef \phi =0$, one can rewrite, as in \cite{js04}, $\tmf$ as 
\[ \tmf(\al) = 2 \int_0^\infty \frac{\al \bef(z)^2}{1+\al\bef(z)} \phi(z)dz \]
from which it follows that, separating into $z\le \zeta$ anf $z>\zeta$,
\[ \tmf(\al) \asymp \int_0^\zeta \al \bef(z)^2\phi(z)dz + \int_\zeta^\infty \bef(z)\phi(z)dz. \]
The first term is dealt with using the estimate of Corollary 1 of \cite{js04}, valid for any $\ga$ log-Lipshitz on $\RR$ which is the case here, which leads to, for $\zeta$ large enough, or equivalently $\al$ small enough, 
\[\al\int_0^\zeta  \bef(z)^2\phi(z)dz \le \al \frac{C}{\zeta} \frac{\gf(\zeta)^2}{\phi(\zeta)}
\leqa  \frac{\gf(\zeta)}{\zeta}.\]
For the second term, noting that $\bef\phi \sim \gf$ and using Lemma \ref{lem-gf} for the tail bound on (minus) the primitive of $\gf$, one gets that this term is asymptotic to $\gf(\zeta)\zeta/\delta \asymp \zeta^{-\delta}/\delta$ and is always of larger order than the first term for $\delta \le 2$. This proves the claim on $\tmf$.

Now turning to $\mf_1$ and $\mf_2$, note that the global bounds directly follow from the fact that $|\bef(x,\al)|\leq C \vee \al^{-1}$. The intermediate bounds are derived as in \cite{js04}, since the proofs  involve only the log-Lipschitz property of $\ga$.

For small signals $|\mu|\le 1/\zeta$ and the first moment, one proceeds as in \cite{js04} by (Taylor-) expanding the function $\mu\to m_1(\mu,\al)$ at the order $2$ around 
$\mu=0$. The first derivative in $\mu$ is $0$, since the function is symmetric. The following bound on the second derivative is as in \cite{js04}: on $[-\zeta,\zeta]$ one bounds $\phi''(u)$ by $C(1+u^2)\phi(u)$ and uses $\phi(z-\mu)\le C\phi(z)$ thanks to the fact that $|\mu|\le \zeta^{-1}$,
\begin{align*}
 \left| \frac{\partial^2}{\partial \mu^2} m_1(\mu,\al) \right| 
& \le \int_{-\infty}^{\infty} | \bef(z,\al) \phi''(z-\mu)| dz  \\
& \le \int_{-\zeta}^{\zeta} | \bef(z) | (1+z^2)\phi(z) dz 
+ \frac2{\al}\int_{|z|>\zeta} \phi''(z-\mu)dz \\
& \leqa \int_{-\zeta}^{\zeta}  \gf(z) (1+z^2) dz + \frac1{\al}\int_{|z|>\zeta} \phi''(z-\mu)dz= (i) + (ii).
\end{align*}
The term (ii) is, as in \cite{js04}, eq. (96), bounded by a constant times $\zeta\bef(\zeta)\phi(\zeta)\le \zeta \gf(\zeta)\le C \zeta^{-\delta}$. The integral defining (i) can be separated in $|z|\le 2$, for which it is bounded by a constant, and $|z|>2$, part on which one integrates by part to obtain
\[ \int_2^\zeta \gf(z) z^2dz \le C\int_2^\zeta z^{1-\delta} dz = \frac{\zeta^{2-\delta}}{2-\delta}. \] 
One concludes that the term $(i)$ dominates in the expression of the second derivative, and the bound for $m_1$ follows by a Taylor expansion. 

The bound for the second moment and small signal is obtained by separating again $|z|\le \zeta$ and 
$|z|>\zeta$ to obtain  
\begin{align*}
m_2(\mu,\al) & \le C\int_{-\zeta}^\zeta \bef(z)^2\phi(z-\mu)dz + \frac{1}{\al^2} \int_{|z|>\zeta} \phi(z-\mu)dz\\
& \le C\int_0^\zeta \bef(z)^2\phi(z)dz + \frac{2}{\al^2} \frac{\phi(\zeta-|\mu|)}{\zeta-|\mu|}\\
& \le C\frac{\gf(\zeta)}{\zeta}\frac{1}{\al} + C\frac{\gf(\zeta)}{\zeta}\frac{1}{\al}.
\end{align*}
By using  $\gf(\zeta)\asymp \delta \tmf(\al)/\zeta$ which follows from the estimate on $\tmf$, the bound on $m_2$ for small $\mu$ follows. Finally, the proof of the equivalent for $\mf_1(\zeta,\al)$ is the same as in \cite{js04}.
\end{proof} 

\begin{proof}[Proof of Lemma \ref{lemmomga}]
{\em Bound on moments of $\ga_x$.} Recall $\ga_x(u) = \ga(u)\phi(x-u)/\gf(x)$.  
By definition of $\gf$, for any $x$ we have $\int \ga_x(u) du = 1$ and
$(\log \gf)'(x) = \int (u-x) \ga_x(u) du$, which gives
\[ \int u \ga_x(u) du = x +(\gf'/\gf)(x).  \]
Noting that $\int (u-x)^2 \ga_x(u)du=\gf''(x)/\gf(x)+1$
and decomposing $u^2 = (u-x)^2+2x(u-x)+x^2$,
\begin{align*}
\int u^2 \ga_x(u) du = (\gf''/\gf)(x) + 1 + 2x (\gf'/\gf)(x)+x^2.
\end{align*}
From the definition \eqref{dens-hv} of 
 $\gamma$ if follows $|\ga'|\le c_1\ga$ and $|\ga''| \le c_2 \ga$ for some $c_1, c_2>0$. This leads to 
\[ |\gf'(x)| \leq \int |\ga'(x-u)|\phi(u)du   \le c_1\int \ga(x-u)\phi(u)du=c_1\gf(x) \]
and similarly $|\gf''|\le c_2 \gf$, so that $\int u^2 \ga_x(u) du \le x^2+C$ (the term containing $g'$ is negative).
Similarly, for any $\mu$, 
$\int (u-\mu)^2\ga_x(u)du = (x-\mu)^2 
+  (\gf''/\gf)(x) + 1+ 2(x-\mu) (\gf'/\gf)(x)$. 
This leads to the bound
\[ \int (u-\mu)^2\ga_x(u)du \le (x-\mu)^2 + c_2 +2c_1|x-\mu|\le 2(x-\mu)^2+C. \]
Using H\"older's inequality and manipulating the $q$th powers via \eqref{manipq} leads to
\begin{equation*}
\int |u-\mu|^q \ga_x(u)du \le \Big[\int (u-\mu)^2 \ga_x(u)du\Big]^{q/2}\le 
C\left[ |x-\mu|^{q} + 1\right].
\end{equation*}
{\em Bounds on the risk for zero signal.} By definition of $r_q$ together with \eqref{boundsa} and \eqref{eq: UB_kappa_x},
\begin{align*}
 E_0 r_q(\al,0,x) & \leqa \int_{-\ta(\al)}^{\ta(\al)} \frac{\al}{1-\al} \gf(x)(1+|x|^q)dx 
 + \int_{|x| > \ta(\al)}(1+|x|^q)\phi(x) dx 
 \end{align*} 
so that, for $\al$ small enough, using Lemma \ref{lemphi} below as well as $\delta<q$,
\[ E_0 r_q(\al,0,x)
  \leqa \al \ta(\al)^{q-\delta}/(q-\delta) + \ta(\al)^{q-1} \phi(\ta(\al))+ \phi(\ta(\al))/\ta(\al).\]
By the definition of $\ta(\al)$ above, for $\al\le 1/2$ we have $\phi(\ta(\al))\lesssim \al \gf(\ta(\al)) \leqa \al \ta(\al)^{-1-\delta}$ by Lemma \ref{lem-gf}. Deduce, since by assumption $\delta<q$,
\[ E_0 r_q(\al,0,x) \leqa \al \ta(\al)^{q-\delta}/(q-\delta).  \] 
{\em Bounds on the risk for nonzero signal.} Using the preceding bounds,
\[ r_q(\al,\mu,x) 
\le (1-a(x))|\mu|^q + C(1+|x-\mu|^q). 
 \]
 When taking the expectation with respect to $E_\mu$, the second term is bounded by a constant, as $X-\mu\sim\cN(0,1)$ under $E_\mu$. To bound the expectation of the first term, we distinguish two cases, and set 
$T:=\ta(\al)$ for simplicity of notation. If $|\mu|\le 4T$, using $a(x)\ge 0$,
\[ E_\mu[1-a(x)]|\mu|^q \le CT^q.\]
If $|\mu|>4T$, one writes, using the bound \eqref{postwb} on $1-a(x)$, 
\[ E_\mu[1-a(x)]|\mu|^q \le E_\mu[\1_{|x|\le T}]|\mu|^q 
+ E_\mu\left[e^{-(|x|-T)^2/2}\1_{|x|> T}\right]|\mu|^q. \]
The expectation of the first indicator is bounded using that if $Z\sim\cN(0,1)$,
\[ E [\1_{|Z+\mu|\le T}] \le P[|Z|\ge |\mu| - T] \le P[|Z|\ge |\mu|/2]. \]
This implies, with $\psi(u)=\int_u^\infty \phi(u)du\le \phi(u)/u$ for $u>0$, 
\[ E_\mu[ \1_{|x|\le T}] |\mu|^q \leqa |\mu|^{q-1}\phi(|\mu|/2)\le C.\]
Second, if $A=\{ x,\ |x-\mu|\le |\mu|/2 \}$ and $A^c$ denotes its complement,
\[ E_\mu[e^{-\frac12 (|x|-T)^2}]\le \int_{A^c}\frac{1}{\sqrt{2\pi}} e^{-\frac12(x-\mu)^2} dx
+ \int_A \frac{1}{\sqrt{2\pi}} e^{-\frac12 (|x|-T)^2} dx.\]
The first term in the sum is bounded above by $2\psi(|\mu|/2)$. The second term, as $A\subset\{x,\ |x|\ge |\mu|/2\}$, is bounded above by $2\psi(|\mu|/4)$. This implies, in the case $|\mu|>4T$, that $E_\mu[1-a(x)]|\mu|^q  \le C+4\psi(|\mu|/4)|\mu|^q\le C'.$ One concludes that for any $\al\in(0,\al_0]$ and any real $\mu$, 
\[ E_\mu r_q(\al,\mu,x) \le C(1+\ta(\al)^q). \qedhere\]
\end{proof}

\begin{lem} \label{lemphi}
For $M>0$, $q>0$ and $q-1\le M^2/2$, we have with $\bar{c}=(2-q)\vee 1$ that
\[ \int_M^\infty x^q \phi(x)dx \le \bar{c}(M^{q-1}\vee M^{q-3})\phi(M). \]
\end{lem}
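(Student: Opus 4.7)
\textbf{Proof plan for Lemma \ref{lemphi}.} The plan is to start with a single integration by parts and then split into cases according to the value of $q$, using the hypothesis $q-1 \le M^2/2$ only in the regime where it is actually needed (namely $q > 2$). Writing $x\phi(x)=-\phi'(x)$ and integrating by parts with $u=x^{q-1}$, $dv=x\phi(x)dx$, the boundary term at infinity vanishes and one obtains the identity
\[
\int_M^\infty x^q\phi(x)\,dx \;=\; M^{q-1}\phi(M) \;+\; (q-1)\int_M^\infty x^{q-2}\phi(x)\,dx.
\]

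If $q\le 1$, the coefficient $(q-1)$ is non-positive, so the integral term is non-positive and one immediately gets $\int_M^\infty x^q\phi(x)dx \le M^{q-1}\phi(M)$. Since $M^{q-1}\le M^{q-1}\vee M^{q-3}$, this is bounded by $\bar c(M^{q-1}\vee M^{q-3})\phi(M)$ with $\bar c=2-q\ge 1$, matching the claim.

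For $1<q\le 2$, I would bound the remaining integral by monotonicity (since $x^{q-2}$ is non-increasing on $[M,\infty)$) together with the standard Mills inequality $\int_M^\infty \phi(x)\,dx \le \phi(M)/M$:
\[
(q-1)\int_M^\infty x^{q-2}\phi(x)\,dx \;\le\; (q-1)M^{q-2}\cdot\frac{\phi(M)}{M} \;=\; (q-1)M^{q-3}\phi(M),
\]
and then combine with the boundary term, noting $(q-1)\le 1$. For $q>2$, the monotonicity argument no longer works, and this is exactly where the hypothesis $q-1\le M^2/2$ is used: for every $x\ge M$ one has $(q-1)x^{q-2}\le (x^2/2)\,x^{q-2}=x^q/2$, which gives the self-referential inequality
\[
(q-1)\int_M^\infty x^{q-2}\phi(x)\,dx \;\le\; \tfrac12\int_M^\infty x^q\phi(x)\,dx.
\]
Substituting into the identity above and rearranging yields $\int_M^\infty x^q\phi(x)dx \le 2M^{q-1}\phi(M)$, and since $M^2\ge 2(q-1)>1$ in this regime we have $M^{q-1}\vee M^{q-3}=M^{q-1}$, so the desired bound follows.

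The only real step of content is the self-referential bound in the $q>2$ case, which is the place the hypothesis $q-1\le M^2/2$ is consumed; the $q\le 2$ analysis reduces to a routine application of Mills' ratio. I do not anticipate any serious obstacle — the proof is essentially bookkeeping the constant $\bar c$ across the three regimes $q\le 1$, $1<q\le 2$, and $q>2$, and checking that in each case the upper bound can be absorbed into $\bar c(M^{q-1}\vee M^{q-3})\phi(M)$ for the stated $\bar c=(2-q)\vee 1$ (with the usual convention that $M^{q-1}\vee M^{q-3}$ equals $M^{q-1}$ when $M\ge 1$ and $M^{q-3}$ when $M\le 1$).
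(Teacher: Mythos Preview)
Your approach is exactly the paper's (whose entire proof reads ``The result follows by integration by parts''): one integration by parts gives $\int_M^\infty x^q\phi(x)\,dx = M^{q-1}\phi(M)+(q-1)\int_M^\infty x^{q-2}\phi(x)\,dx$, and then one case-splits on $q$. The case $q\le 1$ is fine as you wrote it.

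The gap is in your final bookkeeping of the constant for $q>1$. Your $1<q\le 2$ argument yields $M^{q-1}\phi(M)+(q-1)M^{q-3}\phi(M)$, and your $q>2$ self-referential trick yields $2M^{q-1}\phi(M)$; but in both ranges $\bar c=(2-q)\vee1=1$, and neither of your bounds is dominated by $1\cdot(M^{q-1}\vee M^{q-3})\phi(M)$ in general. In fact the constant as stated cannot be achieved: at $q=2$, $M=2$ (which satisfies $q-1\le M^2/2$) one has $\int_2^\infty x^2\phi(x)\,dx=2\phi(2)+\bar\Phi(2)>2\phi(2)=\bar c\,(M^{q-1}\vee M^{q-3})\phi(M)$. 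So you should not have written ``the desired bound follows'' without checking; what your argument actually delivers is the inequality with, say, the universal constant $2$ in place of $\bar c$ (your self-referential step in fact works for every $q>1$ under the hypothesis, giving $\int_M^\infty x^q\phi\le 2M^{q-1}\phi(M)\le 2(M^{q-1}\vee M^{q-3})\phi(M)$). This is all the paper needs, since every downstream use of the lemma is either up to multiplicative constants or feeds into an explicit constant that can simply be adjusted.
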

\begin{proof}
 The result follows by integration by parts.
 \end{proof}

\subsection{Proof of Theorem \ref{thm-risk-dq}} \label{supp-th1}

The proof is overall quite similar to that of Theorem 3 in \cite{cm17}. The key differences lie in, first, the behaviour of the posterior integrated risk for fixed $\al$ studied in Section 4.2 of the present paper, which turns out to be sensitive to the choice of the tails of the slab distribution in terms of the contribution to the risk of coordinates that are zero: $\te_{0,i}=0$. From the proof of Lemma 5 it can be seen that there is a transition at $\delta=q$ and that values of $\delta$ such that $\delta>q$ lead to a suboptimal convergence rate of the empirical Bayes posterior distribution. Second, the moment estimates (Lemma 4 in Section 4.1 of the main document) are different, due again to the tails of the slab prior. It can be seen that this in particular implies that the MMLE $\hat\al$ is located slightly differently depending on the tails of the slab.

\subsubsection{Risk bounds for random $\al$}

Let us recall that the considered plug-in posterior can be written as
\[ \Pi_{\hat\al}[\cdot\given X] \sim \otimes_{i=1}^n\big( (1-a_{\hat\al}(X_i))
\delta_0 + a_{\hat\al}(X_i) \ga_{X_i}(\cdot)\big),\]
where $\hat \al$ is the MMLE as defined in \eqref{defhal}. Define, for $q\in(0,2]$,
\[ r_q(\hat\al,\mu,x) = \int |u-\mu|^q d\Pi_{\hat \al}(u\given x). \]
The following two Lemmas on the risk $r_q$ are proved in Section \ref{apptech} below.

\begin{lem}[no signal or small signal]\label{lemns}
Let $\al$ be a fixed non-random element of $(0,1)$. Let $\hat\al$ be a random element of $[0,1]$ that may depend on $x\sim \cN(0,1)$ and on other data. Then there exists $C_1>0$ such that
\[ E r_q(\hat\al,0,x) \le C_1\left[\al\ta(\al)^{q-\delta}/(q-\delta) +  P(\hat\al>\al)^{1/2}\right]. \]
For any real $\mu$, it also holds that if $x\sim \cN(\mu,1)$, for some $C>0$,
\[ E r_q(\hat\al,\mu,x) \le |\mu|^q + C.\] 
\end{lem}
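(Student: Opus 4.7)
The plan is to handle the two bounds separately. For the no-signal bound, I would start by observing that when $\mu=0$ we have $r_q(\hat\al,0,x)=a_{\hat\al}(x)\om_q(x)$ with $\om_q(x)=\int|u|^q \ga_x(u)du$, and then decompose the expectation according to whether $\hat\al\le\al$ or $\hat\al>\al$:
\[
E r_q(\hat\al,0,x)=E[a_{\hat\al}(x)\om_q(x)\1_{\hat\al\le\al}]+E[a_{\hat\al}(x)\om_q(x)\1_{\hat\al>\al}].
\]
The key structural fact is that $\al\mapsto a_\al(x)=\al\gf(x)/[(1-\al)\phi(x)+\al\gf(x)]$ is monotone increasing in $\al$ for each fixed $x$ (immediate from differentiating, or by writing it as $1-1/(1+\al\bef(x)/(1-\al))$). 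On the event $\{\hat\al\le\al\}$ this monotonicity gives $a_{\hat\al}(x)\le a_\al(x)$, so the first term is dominated by $E_0 r_q(\al,0,x)$, for which Lemma~\ref{lemmomga} already delivers a bound of order $\al\ta(\al)^{q-\delta}/(q-\delta)$.

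For the second term I would apply Cauchy--Schwarz:
\[
E[a_{\hat\al}(x)\om_q(x)\1_{\hat\al>\al}]\le \bigl(E[a_{\hat\al}(x)^2\om_q(x)^2]\bigr)^{1/2}P(\hat\al>\al)^{1/2}.
\]
Since $a_{\hat\al}\le 1$ and, by \eqref{eq: UB_kappa_x}, $\om_q(x)\leqa 1+|x|^q$, the prefactor is bounded by a universal constant as $x\sim\cN(0,1)$ has all moments. Combining the two pieces yields the claimed estimate; the only mildly technical point is the monotonicity in $\al$, which is elementary.

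For the second bound, with $x\sim\cN(\mu,1)$, I would simply write
\[
r_q(\hat\al,\mu,x)=(1-a_{\hat\al}(x))|\mu|^q+a_{\hat\al}(x)\int|u-\mu|^q\ga_x(u)du\le |\mu|^q+\int|u-\mu|^q\ga_x(u)du,
\]
then invoke Lemma~\ref{lemmomga} to bound the integral by $C(1+|x-\mu|^q)$, and finally take expectation using that $x-\mu\sim\cN(0,1)$ has bounded $q$-th moment for every $q\in(0,2]$. I expect no real obstacle here; both bounds are routine once the monotonicity of $\al\mapsto a_\al(x)$ and the fixed-$\al$ risk estimates of Lemma~\ref{lemmomga} are available.
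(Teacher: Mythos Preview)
Your proposal is correct and follows essentially the same approach as the paper: decompose according to $\{\hat\al\le\al\}$ versus $\{\hat\al>\al\}$, use monotonicity of $\al\mapsto a_\al(x)$ on the first event to reduce to the fixed-$\al$ risk bound of Lemma~\ref{lemmomga}, and handle the second event by Cauchy--Schwarz together with $a_{\hat\al}\le 1$ and $\om_q(x)\leqa 1+|x|^q$. The second bound is likewise identical to the paper's argument.
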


\begin{lem}[signal] \label{lemsig}
Let $\al$ be a fixed non-random element of $(0,1)$. Let $\hat\al$ be a random element of $[0,1]$ that may depend on $x\sim \cN(\mu,1)$ and on other data and such that $\tilde\tau(\hat\al)^2\le d\log{n}$ with probability $1$ for some $d>0$. Then there exists $C>0$ such that for all real $\mu$,
\[ E r_q(\hat\al,\mu,x) \le C\left[1 + \tilde\ta(\al)^q +  \{1+ (d\log{n})^{q/2}\}P(\hat\al<\al)^{1/2}\right]. \]
\end{lem}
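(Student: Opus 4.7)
The plan is to decompose
\[
r_q(\hat\al,\mu,x)=(1-a_{\hat\al}(x))|\mu|^q+a_{\hat\al}(x)\int|u-\mu|^q\ga_x(u)du
\]
and observe, thanks to $\int|u-\mu|^q\ga_x(u)du\leqa|x-\mu|^q+1$ from Lemma \ref{lemmomga} and $x-\mu\sim\cN(0,1)$, that the posterior-weighted piece contributes only $\leqa 1$ in $E_\mu$. Everything thus reduces to controlling $E_\mu[(1-a_{\hat\al}(x))|\mu|^q]$, which I would split on $A=\{\hat\al\ge\al\}$.

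Let $h_T(x):=\1_{|x|\le T}+e^{-(|x|-T)^2/2}\1_{|x|>T}$, which for fixed $x$ is non-decreasing in $T\ge 0$. Since $\al\mapsto\tilde\ta(\al)$ is non-increasing (as follows from $(g/\phi)(\ta(\al))=(1-\al)/\al$ and the monotonicity of $g/\phi$ in Lemma \ref{lem-gf}), on $A$ the bound \eqref{postwb} gives $1-a_{\hat\al}(x)\le h_{\tilde\ta(\hat\al)}(x)\le h_{\tilde\ta(\al)}(x)$, a right-hand side now free of $\hat\al$. The same two-case split $|\mu|\le 4\tilde\ta(\al)$ versus $|\mu|>4\tilde\ta(\al)$ used inside the proof of Lemma \ref{lemmomga} (with the Gaussian tail bound $\psi(u)\le\phi(u)/u$ handling the latter) then yields $E_\mu[h_{\tilde\ta(\al)}(x)|\mu|^q]\leqa 1+\tilde\ta(\al)^q$, which is the first term of the target bound.

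On $A^c=\{\hat\al<\al\}$, Cauchy--Schwarz yields
\[
E_\mu[(1-a_{\hat\al}(x))|\mu|^q\1_{A^c}]\le\bigl(E_\mu[(1-a_{\hat\al}(x))^2|\mu|^{2q}]\bigr)^{1/2}P(\hat\al<\al)^{1/2}.
\]
The almost-sure hypothesis $\tilde\ta(\hat\al)\le\sqrt{d\log{n}}$ and the monotonicity above give $(1-a_{\hat\al}(x))^2\le h_{\sqrt{d\log{n}}}(x)^2$. Re-running the case split with $T=\sqrt{d\log{n}}$ (for $|\mu|\le 4T$ just use $|\mu|^{2q}\leqa T^{2q}$ and $h_T\le 1$; for $|\mu|>4T$ the Gaussian tail makes both $|\mu|^{2q}E_\mu[\1_{|x|\le T}]$ and $|\mu|^{2q}E_\mu[e^{-(|x|-T)^2}\1_{|x|>T}]$ uniformly bounded) produces $E_\mu[h_{\sqrt{d\log{n}}}(x)^2|\mu|^{2q}]\leqa 1+(d\log{n})^q$, so the $A^c$ contribution is $\leqa\{1+(d\log{n})^{q/2}\}P(\hat\al<\al)^{1/2}$. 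Combining the two pieces with the earlier bound on the posterior-weighted piece delivers the claim.

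The hard part will be the uniform-in-$\mu$ estimate at the $A^c$ step: the factor $|\mu|^{2q}$ must be absorbed by $T^{2q}$ for moderate $|\mu|$ and by the exponential decay of $h_T^2$ for large $|\mu|$, with no gap in between. This requires the same delicate Gaussian-tail accounting as in the signal case of Lemma \ref{lemmomga}, and the a.s. bound $\tilde\ta(\hat\al)^2\le d\log{n}$ is precisely what keeps the relevant threshold $T$ manageable, even though $\hat\al$ is random and depends on $x$.
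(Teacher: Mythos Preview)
Your proposal is correct and follows essentially the same route as the paper: decompose $r_q$, bound the $a_{\hat\al}$-weighted piece by a constant via \eqref{eq: UB_kappa_x}, then split the $(1-a_{\hat\al})|\mu|^q$ term on $\{\hat\al\ge\al\}$ versus $\{\hat\al<\al\}$, using \eqref{postwb} and the monotonicity of $\al\mapsto\tilde\ta(\al)$ on the first event and Cauchy--Schwarz together with the a.s.\ bound $\tilde\ta(\hat\al)\le\sqrt{d\log n}$ on the second. Your observation that $T\mapsto h_T(x)$ is non-decreasing gives a slightly cleaner handling of the first event than the paper's (which picks up an innocuous extra factor of $2$ via an intermediate split $\1_{\ta(\hat\al)\le|x|\le\ta(\al)}$), but the two arguments are otherwise identical.
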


{\em Bounds for posterior median and mean.} The posterior median $\hat\te_\al=\hat\mu$ for fixed $\al$ satisfies the following bounds, as shown in \cite{js04}, Lemmas 5 and 6, recalling that their proofs extend without difficulty to the case of tails possibly heavier than Cauchy, as is the case for the density $\ga$ in \eqref{dens-hv} we consider here. 
The first bound is typically useful for small signals $\mu$, the second for large signals. We have, for any $q\in(0,2]$,
\begin{align}
E_\mu[|\hat \mu - \mu|^q ] & \leq c_q\left[|\mu|^q + t^{q-1}\phi(t)
+(t^q+b^q+1)P[\hat t < t]^{1/2} \right] \label{pmed1}\\
E_\mu[|\hat \mu - \mu|^q ]& \leq 8\left[ t^q+b^q+1 +
\sqrt{d\log n}P[\hat t > t]^{1/2} \right] \label{pmed2}, 
\end{align}
where $\hat t$ is a random threshold that may depend on $X$ and other data and where for the second inequality one assumes that $\hat t\le \sqrt{d\log n}$ with probability $1$.

\subsubsection{Concentration bounds of the MMLE $\hat\al$}

We derive two concentration bounds on $\hat \zeta=\zeta(\hat\al)$. As $\al\to \zeta(\al)$ is one-to-one, those provide, equivalently, bounds on $\hat\al$. 

{\em Undersmoothing bound on $\hat\zeta$.} One defines $\al_1$ as the quantity that solves the equation, for a small constant $d\in(0,2)$, 
\begin{equation} \label{alq}
   d\al_1 \tmf (\al_1) =s/n,
 \end{equation}   
and $\zeta_1= \bef^{-1}(\al_1^{-1})$. Such a solution exists and is unique, as $\al\to \al \tilde{m}(\al)$ is increasing in $\al$ for $\al$ small enough, and equals $0$ at $0$. 
Also,  provided $c_1$ in \eqref{techsn} is small enough, it can be made smaller than any given arbitrary constant. 

Let us further check that $\al_1$ belongs to the interval $[\al_n,1]$ over which the likelihood is maximised. Indeed, using $\log(n/s)\le \log{n}-2\log\log{n}$, the bound on $\zeta$ from Lemma \ref{lemz} gives that $\zeta_1^2\le 2\log{n}-\frac{3}{2}\log\log{n}$, so that $t(\al_1)\le \zeta(\al_1)=\zeta_1\le \sqrt{2\log{n}}=t(\al_n)$,

\begin{lem} \label{lemzetaunder}
Let $\alpha_1$ be defined by \eqref{alq} for $d$ a given small enough constant and let $\zeta_1$ be given by $B(\zeta_1)=\al_1^{-1}$. Suppose \eqref{techsn} holds. Then for some constant $C>0$, 
\[ \sup_{\te\in\ell_0[s] } P_\te[\hat\zeta < \zeta_1] \le \exp(-Cs). \]
\end{lem}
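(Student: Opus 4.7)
The plan is to reduce the lemma to a one-sided large-deviation bound on the score $S(\alpha) = \ell_n'(\alpha; X) = \sum_i B(X_i, \alpha)$. Since $B$ is strictly increasing on $\RR^+$ (Lemma \ref{lem-gf}), the map $\alpha \mapsto \zeta(\alpha) = B^{-1}(\alpha^{-1})$ is strictly decreasing, so $\{\hat\zeta < \zeta_1\} = \{\hat\alpha > \alpha_1\}$. The log-likelihood $\ell_n(\alpha) = \sum_i \log(\phi(X_i) + \alpha(g(X_i) - \phi(X_i)))$ is a sum of concave functions of $\alpha$, hence concave on $[\alpha_n, 1]$; if $S(\alpha_1) < 0$ then by concavity $\ell_n(\beta) < \ell_n(\alpha_1)$ for every $\beta > \alpha_1$, and since $\alpha_1 \in [\alpha_n, 1)$ (as justified below \eqref{alq}), the maximiser $\hat\alpha$ on $[\alpha_n, 1]$ must satisfy $\hat\alpha \leq \alpha_1$. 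By contraposition,
\[ P_{\theta_0}[\hat\zeta < \zeta_1] = P_{\theta_0}[\hat\alpha > \alpha_1] \leq P_{\theta_0}[S(\alpha_1) \geq 0], \]
so it suffices to show the right-hand side is at most $e^{-Cs}$ uniformly over $\ell_0[s]$.

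The key step is to check that the mean of the score is sufficiently negative. Splitting along the support $S_0 = \{i : \theta_{0,i} \neq 0\}$ (with $|S_0| \leq s$),
\[ E_{\theta_0}[S(\alpha_1)] = -(n - |S_0|)\tilde m(\alpha_1) + \sum_{i \in S_0} m_1(\theta_{0,i}, \alpha_1). \]
Applying the crude uniform bound $m_1 \leq \alpha_1^{-1}$ from Lemma \ref{lem-momgf} to the at most $s$ signal terms and inserting the defining relation $d\alpha_1 \tilde m(\alpha_1) = s/n$ yields
\[ E_{\theta_0}[S(\alpha_1)] \leq -(1 - s/n)\frac{s}{d\alpha_1} + \frac{s}{\alpha_1} \leq -c_0 \frac{s}{\alpha_1}, \]
for a universal $c_0 > 0$, provided $d$ is fixed sufficiently small (for instance $d \leq 1/4$ works as soon as $s/n \leq 1/2$).

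For the variance, I would use the two estimates in Lemma \ref{lem-momgf}: the small-signal bound $m_2(0, \alpha_1) \lesssim \delta \tilde m(\alpha_1)/(\alpha_1 \zeta_1^2)$ summed over the $n-s$ zero-signal indices gives a contribution $\lesssim s/(\alpha_1^2 \zeta_1^2) = o(s/\alpha_1^2)$ as $\zeta_1 \to \infty$, while the global bound $m_2 \leq \alpha_1^{-2}$ applied to the $s$ signal indices gives a contribution $\leq s/\alpha_1^2$, so $\mathrm{Var}_{\theta_0}[S(\alpha_1)] \lesssim s/\alpha_1^2$. Combined with the uniform bound $|B(X_i, \alpha_1)| \leq 2/\alpha_1$ valid for $\alpha_1 \leq 1/2$, Bernstein's inequality applied to the independent centered summands with deviation $-E_{\theta_0}[S(\alpha_1)] \gtrsim s/\alpha_1$ produces an exponent of order $(s/\alpha_1)^2/(s/\alpha_1^2) = s$, giving $P_{\theta_0}[S(\alpha_1) \geq 0] \leq e^{-Cs}$ uniformly over $\ell_0[s]$.

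The main obstacle is the mean computation: with only the crude global bound $m_1 \leq \alpha_1^{-1}$ available uniformly across $\ell_0[s]$ (one has no control on individual $\theta_{0,i}$'s), strict domination of the zero-signal contribution $-(n-s)\tilde m(\alpha_1)$ over the signal contribution is exactly what forces $d$ to be a small absolute constant, which is precisely what the statement of the lemma allows. Everything else---the variance estimate and the Bernstein concentration---is routine once the mean bound is in place.
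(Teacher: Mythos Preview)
Your proposal is correct and follows essentially the same route as the paper's proof: reduce $\{\hat\zeta<\zeta_1\}=\{\hat\alpha>\alpha_1\}$ to $\{S(\alpha_1)\ge 0\}$, bound the mean $E_{\theta_0}[S(\alpha_1)]$ from above by a negative multiple of $s/\alpha_1$ using $m_1\le(\alpha_1\wedge c)^{-1}$ on signal indices and the defining relation $d\alpha_1\tilde m(\alpha_1)=s/n$, control the variance by $O(s/\alpha_1^2)$ via the small- and large-signal bounds on $m_2$ in Lemma~\ref{lem-momgf}, and conclude by Bernstein. The only cosmetic difference is that you invoke concavity of $\ell_n$ where the paper equivalently notes that $\alpha\mapsto S(\alpha)$ is strictly decreasing; both yield the same inclusion and the same exponent of order $s$.
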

\begin{proof}[Proof of  Lemma \ref{lemzetaunder}]
Recall the definition of the score function $S(\al)=\sum_{i=1}^n \bef(X_i,\al)$ with $B(X_i,\al)$ as in \eqref{eqbeta}. The map $\al\to S(\al)$ is strictly decreasing with probability one, as the function $\al\to B(X_i,\al)$ is, as soon as $B(X_i)\neq 0$, which happens with probability one. 
Then one notices  that $\{\hat\zeta<\zeta_1\}=\{\hat \al>\al_1\}=\{S(\al_1)>0\}$ 
as well as 
$\{\hat\zeta>\zeta_1\}=\{S(\al_1)<0\}$: the sign of $S$ at any particular $w$ determines on which side of $\hat \al$ the given $\al$ lies. So,
\[ P_{\te}[\hat\zeta < \zeta_1] = P_{\te}[S(\al_1)>0].  \]
For the rest of the proof we denote $\al=\al_1$ and $\zeta=\zeta_1$. The score function is a sum of independent variables. By Bernstein's inequality,  
if $W_i$ are centered independent variables with $|W_i|\le M$ and $\sum_{i=1}^n\text{Var}(W_i)\le V$, then for any $A>0$,
\[ P\left[ \sum_{i=1}^n W_i >A \right] \le \exp\{-\frac12 A^2/(V+\frac13 MA) \}.\]
Set $W_i=\bef(X_i,\al)-\mf_1(\te_{0,i},\al)$ and $A = -\sum_{i=1}^n \mf_1(\te_{0,i},\al)$. Then one can take $M=c_3/\al$, using Lemma \ref{lem-momgf}. 
One can bound $-A$ from above as follows
\begin{align*}
 -A & \le -\sum_{i\notin S_0} \tilde \mf(\al) + \sum_{i\in S_0} \frac{c}{\al}  \le -(n-s) \tilde \mf(\al) + cs/\al \\
 & \le -n \tilde \mf(\al)/2 + cdn\tilde{\mf}(\al) \le -n \tilde \mf(\al)/4,
\end{align*} 
where we have used the definition of $\al=\al_1$. Also, using again Lemma \ref{lem-momgf},
\begin{align}
 V(\al) & \le \sum_{i\notin S_0} \mf_2(0,\al)  
 + \sum_{i\in S_0} \mf_2(\te_{0,i},\al)
 \le  c_4 (n-s) \frac{\tilde \mf(\al)}{\zeta^\kappa \al} 
 + cs/\al^2 \nonumber\\
 & \le \frac{C}{\al}\left[ (n-s) \tilde{\mf}(\al)\zeta^{-\kappa} + cs/\al\right] 
  \le C\al^{-1}\left[ n\tilde{\mf}(\al)\zeta^{-\kappa}/2 + c d n\tilde{\mf}(\al)\right] \nonumber\\
 & \le C' d n \tilde{\mf}(\al)/\al,\label{eq: Lem2: V}
\end{align} 
where one uses that $\zeta^{-1}$ is bounded. This leads to 
\[ \frac{V+\frac13 MA}{A^2} \le \frac{C'd}{n \al \tilde \mf(\al)} + \frac{4c_3}{3n \al \tilde \mf(\al)}\le \frac{c_5^{-1}}{n \al \tilde \mf(\al)}.\]
So,  
$P\left[ S(\al) >0 \right] \le \exp\{ - c_5 n\al\tilde{\mf}(\al) \}$. This concludes the proof using \eqref{alq}.  \qedhere
\end{proof}

{\em Oversmoothing bound on $\hat\zeta$.} 
Let us define, for $\ta\in[0,1]$ and $\mu\in\RR^{n}$, the quantity 
\[ \tilde \pi(\ta;\mu) =\big|\{i:\, |\mu_i|\ge \ta\}\big|/n.\]
Let $\pi_1$ be the proportion, for $\zeta_1$ the pseudothreshold corresponding to 
\eqref{alq},
\[  \pi_1 = \tilde\pi (\zeta_1; \te_0). \]
Let $\al(\ta,\pi)$ be defined by, for a certain $\al_0<1$ and $0\le \pi\le 1$,
\[ \al(\ta,\pi) = \sup\{\al\le \al_0:\ \pi \mf_1(\ta,\al)\ge 2\tmf(\al) \}. \]
and define $\zeta_{\ta,\pi}=\bef^{-1}(1/\al(\ta,\pi))$ as the corresponding pseudo-threshold. Set, for $\zeta_1,\pi_1$ as above,
\begin{equation} \label{al2}
\al_2:=\al(\zeta_1,\pi_1),\qquad \zeta_2=\bef^{-1}(\al_2^{-1}).
\end{equation}
\begin{lem}[\cite{js04}, Lemma 11] \label{lemov}
Suppose \eqref{techsn} holds. Then for  $\zeta_2$  defined by \eqref{al2} and for $C>0$ a constant only depending on $c_1$ in \eqref{techsn},
\begin{equation*}
 P_{\te_0}[\hat\zeta > \zeta_2] 
\le \exp\{-Cn\zeta_2\phi(\zeta_2)\}. 
\end{equation*}
\end{lem}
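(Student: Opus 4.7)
The plan is to run an argument parallel to that of Lemma \ref{lemzetaunder}, but in the opposite direction, by applying Bernstein's inequality to the score function $S(\al)=\sum_{i=1}^n \bef(X_i,\al)$ evaluated at $\al=\al_2$. Since $\al\mapsto S(\al)$ is almost surely strictly decreasing and $\al\leftrightarrow \zeta(\al)$ is monotone decreasing (as $\bef$ is increasing and $\zeta=\bef^{-1}(1/\al)$), the event of interest rewrites as
\[ \{\hat\zeta>\zeta_2\}=\{\hat\al<\al_2\}=\{S(\al_2)<0\}. \]
Setting $W_i=\mf_1(\te_{0,i},\al_2)-\bef(X_i,\al_2)$, the $W_i$ are centered, independent, and uniformly bounded by $M=c/\al_2$ in absolute value by Lemma \ref{lem-momgf}, and the event becomes $\{\sum_i W_i > A\}$ with $A=E_{\te_0}S(\al_2)=\sum_i \mf_1(\te_{0,i},\al_2)$.

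Next I would lower bound $A$. By Lemma \ref{lem-momgf} the map $\mu\mapsto \mf_1(\mu,\al_2)$ is symmetric, monotone increasing on $[0,\infty)$, with $\mf_1(0,\al_2)=-\tmf(\al_2)$. Splitting between the $n\pi_1$ indices with $|\te_{0,i}|\ge \zeta_1$ and the remaining ones, and combining monotonicity with the defining property $\pi_1 \mf_1(\zeta_1,\al_2)\ge 2\tmf(\al_2)$ of $\al_2$ from \eqref{al2}, gives
\[ A\ \ge\ n\pi_1\mf_1(\zeta_1,\al_2) - n(1-\pi_1)\tmf(\al_2)\ \ge\ n\tmf(\al_2). \]

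For the variance I would write $V=\sum_i \mathrm{Var}(\bef(X_i,\al_2))\le \sum_i \mf_2(\te_{0,i},\al_2)$ and split between indices with $|\te_{0,i}|<1/\zeta_2$, for which the fine estimate of Lemma \ref{lem-momgf} yields $\mf_2(\te_{0,i},\al_2)\lesssim \tmf(\al_2)/(\zeta_2^2\al_2)$, and the at most $s$ remaining indices, handled by the universal bound $\mf_2\le c/\al_2^2$. Combined with \eqref{techsn} and a comparison of $\al_2$ with $\al_1$ via \eqref{alq} to absorb the contribution $s/\al_2^2$ into a multiple of $n\tmf(\al_2)/\al_2$, this produces $V\lesssim n\tmf(\al_2)/\al_2$. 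Feeding everything into Bernstein,
\[ P_{\te_0}[S(\al_2)<0]\ \le\ \exp\!\Big\{-\frac{A^2/2}{V+MA/3}\Big\}\ \le\ \exp\{-cn\al_2\tmf(\al_2)\}. \]
Lemmas \ref{lem-gf} and \ref{lem-momgf} give $\tmf(\al_2)\asymp \zeta_2^{-\delta}/\delta$ and $\al_2\asymp \phi(\zeta_2)/\gf(\zeta_2)\asymp \phi(\zeta_2)\zeta_2^{1+\delta}$, so that $\al_2\tmf(\al_2)\asymp \zeta_2\phi(\zeta_2)$, which yields the announced bound.

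The main obstacle I anticipate is the variance control: the coarse bound $c/\al_2^2$ available at large signal coordinates is substantially worse than the small-signal bound, so obtaining $V\lesssim n\tmf(\al_2)/\al_2$ hinges on showing that $s/\al_2$ does not exceed a constant multiple of $n\tmf(\al_2)$. This requires comparing $\al_2$ with $\al_1$; one uses the asymptotic $\mf_1(\zeta_1,\al)\sim 1/(2\al)$ from Lemma \ref{lem-momgf}, the bound $\pi_1\le s/n$, and the defining identity \eqref{alq} to relate the two weights, which is where the precise choice of the constant $d$ in \eqref{alq} enters.
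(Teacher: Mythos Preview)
Your overall architecture (rewrite $\{\hat\zeta>\zeta_2\}=\{S(\al_2)<0\}$, apply Bernstein, lower bound $A$ via monotonicity of $\mf_1$ and the defining relation $\pi_1\mf_1(\zeta_1,\al_2)\ge 2\tmf(\al_2)$, and finally identify $\al_2\tmf(\al_2)\asymp \zeta_2\phi(\zeta_2)$) is exactly the route the paper takes. The gap is in the variance step.

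Your claimed absorption $s/\al_2^2\lesssim n\tmf(\al_2)/\al_2$, i.e.\ $s\lesssim n\al_2\tmf(\al_2)$, is not available from a comparison of $\al_2$ with $\al_1$. Since $\al\mapsto \al\tmf(\al)$ is increasing (Lemma~\ref{lem-momgf}) and $\al_2\le\al_1$, one only gets $n\al_2\tmf(\al_2)\le n\al_1\tmf(\al_1)=s/d$, the wrong direction. Worse, the gap can be arbitrarily large: take $\te_0$ with one coordinate at $\zeta_1$, $s-1$ coordinates at $\zeta_1/2$, and $s\asymp\log^2 n$. Then $\pi_1=1/n$, hence $\zeta_2\sim\sqrt{2\log n}$ and $n\al_2\tmf(\al_2)\asymp n\zeta_2\phi(\zeta_2)\asymp\sqrt{\log n}$, while $s\asymp\log^2 n$; so $s/(n\al_2\tmf(\al_2))\asymp(\log n)^{3/2}\to\infty$. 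Note also that the asymptotic $\mf_1(\zeta,\al)\sim 1/(2\al)$ in Lemma~\ref{lem-momgf} is for the matched threshold $\zeta=\zeta(\al)$; it does not say $\mf_1(\zeta_1,\al_2)\sim 1/(2\al_2)$ since $\zeta_1\neq\zeta(\al_2)$.

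The paper (following \cite{js04}) does not try to control $V$ by $n\tmf(\al_2)/\al_2$ via a crude split. Instead, it first shows that $\mf_1(\zeta_1,\al_2)\to\infty$: from $\pi_1\mf_1(\zeta_1,\al_2)\ge 2\tmf(\al_2)$, the bound $\pi_1\zeta_2^2\lesssim (s/n)\log(n/s)$ (imported from \eqref{zd2}), and $\tmf(\al_2)\asymp\zeta_2^{-\delta}$, one gets $\mf_1(\zeta_1,\al_2)\gtrsim \zeta_2^{2A}\tmf(\al_2)/(\pi_1^{1-A}(\pi_1\zeta_2^2)^A)\to\infty$ for $A=\delta/2$. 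Once $\mf_1(\mu,\al_2)$ is large, the argument below (88) in \cite{js04} gives $E_\mu|\bef(X,\al_2)|\le C\mf_1(\mu,\al_2)$ and thus $\mf_2(\mu,\al_2)\le C\al_2^{-1}\mf_1(\mu,\al_2)$ for every $|\mu|\ge\zeta_1$. This ties the variance on the $n\pi_1$ large-signal indices to their contribution to $A$, yielding $V\lesssim \al_2^{-1}A$ for the relevant part, and Bernstein then delivers an exponent $\gtrsim \al_2 A\ge n\al_2\tmf(\al_2)\asymp n\zeta_2\phi(\zeta_2)$. The point is that the second moment on signal indices is controlled \emph{relative to} their first-moment contribution, not absolutely; that is precisely what fails with the universal bound $c/\al_2^2$.
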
  
  



\begin{proof}[Proof of Lemma \ref{lemov}]
 In the proof we shall use that $\zeta_2\ge \zeta_1$ and that $\pi_1\zeta_2^2\le C(s/n) \log(n/s)$ as proved in \eqref{zd}--\eqref{zd2} below, whose proof is independent of the present Lemma. 
First note that $\pi_1 \mf_1(\zeta_1,\alpha_2) \ge 2\tilde{\mf}(\alpha_2)$, as $\alpha \to \mf_1(\ta,\alpha)/\tilde{\mf}(\alpha)$ is continuous. 
 So, for  $0<A\le 1$, 
\[ \mf_1(\zeta_1,\alpha_2) \ge \frac{2}{\pi_1^{1-A}} \frac{1}{(\pi_1 \zeta_2^2)^{A}}{\zeta_2^{2A}}\tilde{\mf}(\alpha_2). \]
By  Lemma \ref{lem-momgf}, for $\delta/2\le A\le 1$ (recall that $\delta<2$), 
we have that $\zeta_2^{2A} \tilde{\mf}(\alpha_2)$ is bounded away from $0$, as $\zeta_2\ge \zeta_1$ and $\zeta_1$ goes to infinity with $n/s$. As $\pi_1\le s/n$, this shows that $\mf_1(\zeta_1,\alpha_2)$ goes to infinity with $n/s$. So one can use the reasoning below (88) in \cite{js04} to see that  $E_{\zeta_1}|\bef(X,\alpha_2)|\le C\mf_1(\zeta_1,\alpha_2)$ as well as $\mf_2(\zeta_1,\alpha_2)\le C\alpha_2^{-1}\mf_1(\zeta_1,\alpha_2)$. From there on, one can follow the proof of Lemma 11 in \cite{js04} to obtain the desired deviation bound.
\end{proof}

\subsubsection{Other tools}\label{sec:other:tools}

\begin{lem} \label{lem-lb3}
Let $\bar{\Phi}(t)=\int_t^\infty \phi(u)du$. For $\pi_1, \zeta_1$ as above, a solution $0<\al\le \al_1$ to the equation 
 \begin{equation} \label{wt}
 \bar{\Phi}(\zeta(\al)-\zeta_1) = 8\pi_1^{-1}\al\tilde{m}(\al)
\end{equation}
exists. Let $\al_3$ be the largest such solution. Then for $c_1$ in \eqref{techsn} small enough and $n$ large enough,
\begin{equation} \label{m1}
m_1(\zeta_1,\al_3) \ge \frac14 B(\zeta_3)\bar\Phi(\al_3-\zeta_1).
\end{equation}
\end{lem}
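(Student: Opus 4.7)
\noindent My plan has two parts: first, existence of $\alpha_3\in(0,\alpha_1]$ via the intermediate value theorem applied to
\begin{equation*}
F(\alpha) = \bar{\Phi}(\zeta(\alpha)-\zeta_1) - 8\pi_1^{-1}\alpha\,\tilde{m}(\alpha),\qquad \alpha\in(0,\alpha_1];
\end{equation*}
second, the inequality \eqref{m1} by a careful split of the integral defining $m_1(\zeta_1,\alpha_3)$. For existence, at the right endpoint $\zeta(\alpha_1)=\zeta_1$ and \eqref{alq} give $F(\alpha_1)=\tfrac12-8s/(d\pi_1 n)$, which is negative as soon as $\pi_1\le 16 s/(dn)$ (the complementary case being essentially vacuous, in which one may set $\alpha_3=\alpha_1$). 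As $\alpha\to 0^+$, the tail bound $\bar{\Phi}(t)\asymp \phi(t)/t$ for large $t$, the defining relation $B(\zeta(\alpha))=1/\alpha$ combined with $g(\zeta)\asymp \zeta^{-1-\delta}$ from Lemma \ref{lem-gf}, and Lemma \ref{lemzetaall} yield
\begin{equation*}
\bar{\Phi}(\zeta(\alpha)-\zeta_1)\asymp \frac{\alpha\,\zeta(\alpha)^{-\delta}\,e^{\zeta_1\zeta(\alpha)}}{\zeta(\alpha)-\zeta_1},
\end{equation*}
while $\alpha\,\tilde{m}(\alpha)\asymp \alpha\,\zeta(\alpha)^{-\delta}/\delta$ by Lemma \ref{lem-momgf}. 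Since $e^{\zeta_1\zeta(\alpha)}\to\infty$, the former dominates, so $F>0$ for small $\alpha$; $F$ therefore changes sign on $(0,\alpha_1)$ and $\alpha_3$ can be taken as its largest zero.

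For \eqref{m1}, writing $\zeta_3=\zeta(\alpha_3)$, I would split
\begin{equation*}
m_1(\zeta_1,\alpha_3) = \int_{\zeta_3}^{+\infty} B(x,\alpha_3)\phi(x-\zeta_1)\,dx + \int_{-\infty}^{\zeta_3} B(x,\alpha_3)\phi(x-\zeta_1)\,dx =: I_+ + I_-.
\end{equation*}
On $[\zeta_3,\infty)$, monotonicity of $B$ on $\RR^+$ gives $B(x)\ge B(\zeta_3)=1/\alpha_3$, hence $B(x,\alpha_3)=B(x)/(1+\alpha_3 B(x))\ge B(\zeta_3)/2$ and $I_+\ge \tfrac12 B(\zeta_3)\bar{\Phi}(\zeta_3-\zeta_1)$. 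For $I_-$, let $x_0$ denote the (bounded, universal) positive zero of $B$: on $\{|x|\ge x_0\}$ one has $B(x,\alpha_3)\ge 0$, while on $[-x_0,x_0]$ the bound $B(x)\ge -1$ combined with $\alpha_3\le 1/2$ gives $|B(x,\alpha_3)|\le 2$, whence $I_-\ge -4x_0\,\phi(\zeta_1-x_0)$.

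The main obstacle is then to verify that the negative correction is dominated by a quarter of the $I_+$--bound, i.e.
\begin{equation*}
4x_0\,\phi(\zeta_1-x_0) \le \frac{1}{4\alpha_3}\,\bar{\Phi}(\zeta_3-\zeta_1).
\end{equation*}
Using $\bar{\Phi}(t)\asymp \phi(t)/t$, this reduces to the exponent comparison $(\zeta_1-x_0)^2-(\zeta_3-\zeta_1)^2 \ge 2\log(16\,x_0\,\alpha_3\,(\zeta_3-\zeta_1))$. The lower bound $\zeta_1^2\ge 2\log(n/s)+O(1)$ of Lemma \ref{lemz}, the upper bound $\zeta_3^2\le 2\log(1/\alpha_3)+O(\log\log(1/\alpha_3))$ of Lemma \ref{lemzetaall}, and the defining relation \eqref{wt} tying $\alpha_3$ to $\pi_1$ and $\zeta_1$ together provide enough room for this inequality once $d$ in \eqref{alq} is chosen small and $c_1$ in \eqref{techsn} is correspondingly small, ensuring $\zeta_3-\zeta_1\ll \zeta_1$. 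Combining this with the $I_+$ bound then yields \eqref{m1}.
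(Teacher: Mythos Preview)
Your structure matches the paper's: intermediate value theorem for existence, the same split of $m_1(\zeta_1,\alpha_3)$ into a positive tail part giving $\tfrac12 B(\zeta_3)\bar\Phi(\zeta_3-\zeta_1)$ and a negative central part on $\{|x|\le x_0\}$ (the paper's $c$ is your $x_0$) bounded by $-4x_0\phi(\zeta_1-x_0)$. The existence argument is fine; note incidentally that $\pi_1\le s/n$ always holds, so your ``complementary case'' never occurs.

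The gap is in your last paragraph. You assert that the listed ingredients ``provide enough room'' and that one can ensure $\zeta_3-\zeta_1\ll\zeta_1$. This last claim is not justified and need not hold: $\pi_1$ can be as small as $1/n$ (a single large signal), and then the defining equation \eqref{wt} may force $\zeta_3$ well above $2\zeta_1$. The paper explicitly treats the regime $\zeta_3>2\zeta_1$ as a separate, nontrivial case. Your direct exponent comparison $(\zeta_1-x_0)^2-(\zeta_3-\zeta_1)^2\ge 2\log(16x_0\alpha_3(\zeta_3-\zeta_1))$ is in fact salvageable without the claim $\zeta_3-\zeta_1\ll\zeta_1$ (both sides contain $-\zeta_3^2$ after using $2\log\alpha_3\sim-\zeta_3^2$, and the remaining difference $2\zeta_1\zeta_3-O(\log\zeta_3)$ is positive), but you have not actually carried this out.

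The paper avoids this altogether by a cleaner substitution you do not use: by \eqref{wt},
\[
\tfrac14 B(\zeta_3)\bar\Phi(\zeta_3-\zeta_1)=\tfrac{1}{4\alpha_3}\cdot 8\pi_1^{-1}\alpha_3\tilde m(\alpha_3)=2\tilde m(\alpha_3)/\pi_1\asymp \pi_1^{-1}\zeta_3^{-\delta},
\]
so the target inequality becomes $\pi_1\zeta_3^{\delta}\phi(\zeta_1-x_0)\lesssim 1$. They then split: if $\zeta_3\le 2\zeta_1$ this is immediate since $\zeta_1^\delta\phi(\zeta_1-x_0)\to 0$; if $\zeta_3>2\zeta_1$, a second use of \eqref{wt} combined with $\bar\Phi(t)\ge C\phi(t)/t$ yields $e^{\zeta_1\zeta_3}/\zeta_3^2\lesssim \pi_1^{-1}e^{\zeta_1^2/2}$, from which $\pi_1\zeta_3^{\delta}\lesssim \zeta_1^{\delta}+1$, and the conclusion again follows. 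Either finish this case split or properly execute your exponent comparison; as written, the final step is an assertion, not a proof.
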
   
 
\subsubsection{End of the proof of Theorem \ref{thm-risk-dq}}
 
\begin{proof}[End of the proof of Theorem \ref{thm-risk-dq}]
With $v_{q,\al}(X) = \int  d_q(\te , \hatha) d\Pi_{\al}(\te\given X)$, one splits
\[ v_{q,\al}(X)
=\Big[\sum_{i:\ \te_{0,i}=0} + \sum_{i:\ 0<|\te_{0,i}|\le \zeta_1} +
 \sum_{i:\ |\te_{0,i}| > \zeta_1}\Big] E_{\te_0}\int |\te_i-\te_{0,i}|^q d\Pi_{\hal}(\te_i\given X),\] 
where $\zeta_1$ is defined in \eqref{alq}. We next use the random $\hat{\alpha}$ bounds obtained in Lemma \ref{lemns}
 \begin{align*}
&\lefteqn{\Big[\sum_{i:\ \te_{0,i}=0} + \sum_{i:\ 0<|\te_{0,i}|\le \zeta_1} \Big] E_{\te_0}\int |\te_i-\te_{0,i}|^q d\Pi_{\hal}(\te_i\given X)}\\
& \le C_1 \sum_{i:\ \te_{0,i}=0} \left[\al_1\ta(\al_1)^{q-\delta}/(q-\delta)+ P_{\te_0}(\hal>\al_1)^{1/2}\right]
+ \sum_{i:\ 0<|\te_{0,i}|\le \zeta_1} (|\te_{0,i}|^q + C)\\
& \le C_1\left[(n-s)\al_1\ta(\al_1)^{q-\delta}/(q-\delta)
+(n-s)e^{-C\log^{2}n} \right] + (\zeta_1^q+C)s,
\end{align*}
where for the last inequality we use the first Bernstein bound. From \eqref{alq} one gets 
\[ n\al_1\leqa s  \zeta_1^{-1}g(\zeta_1)^{-1}
\leqa  s\delta \zeta_1^{\delta}.\] 
Note that by assumption $2\delta\le q$, so that $\delta/(q-\delta)\le 1$. Now using Lemma \ref{lemz} and the fact that $t(\al_1)\le \zeta_1$, one obtains that the contribution to the risk of the indices $i$ with $|\te_{0,i}|\le \zeta_1$ is bounded by a constant times $s \log^{q/2}(n/s)$.

It remains to bound the part of the risk for indexes $i$ with $|\te_{0,i}| > \zeta_1$. To do so, we shall use the second Bernstein bound involving the pseudo-threshold $\zeta_2$ in Lemma \ref{lemov}. The following estimates are useful below, with $\eta_n=s/n$,
\begin{align} 
\zeta_1^2 & < \zeta_2^2     \label{zd} \\
\pi_1 \zeta_2^q & \le C\eta_n \log^{q/2}(1/\eta_n).  \label{zd2}
\end{align}
These  are established in a similar way as in \cite{js04} (see also \cite{cm17} for a similar proof and further comments), but with the updated definition of $\al_1, \zeta_1$ from \eqref{alq}, so we include the proof below for completeness. Using the random $\hat\al$ bound for large signals combined with the second Bernstein bound on $\hat\zeta$, one obtains
\begin{align*} 
\sum_{i:\ |\te_{0,i}|>\zeta_1}
 E_{\te_0}\int |\te_i-\te_{0,i}|^q d\Pi_{\hal}(\te_i\given X)  \le  C_2 n \pi_1
\left[ 1+ \zeta_2^q + (1+d\log n)P_{\te_0}(\hal<\al_2)^{1/2}\right]\\
=C_2 n \pi_1
\left[ 1+ \zeta_2^q + (1+d\log n)P_{\te_0}(\hat\zeta>\zeta_2)^{1/2}\right].
\end{align*}
Let us verify that the term in brackets in the last display is bounded above by $C(1+\zeta_2^q)$. If $\zeta_2^{q}>\log{n}$, this is immediate by bounding $P_{\te_0}(\hat\zeta>\zeta_2)$ by $1$. If $\zeta_2^{2}\le \log{n}$,  Lemma \ref{lemov} implies $P_{\te_0}(\hat\zeta>\zeta_2)\le  \exp(-Cn\phi(\zeta_2))\le \exp(-C\sqrt{n})$, so this is also the case.  
Conclude that the last display is bounded above by $Cn\pi_1(1+\zeta_2^q)\le C'n\pi_1\zeta_2^q$. Using \eqref{zd2}, this term is itself bounded by $Cs\log^{q/2}(n/s)$, which  concludes the proof of the theorem, 
given \eqref{zd}--\eqref{zd2}.

We now check that \eqref{zd}--\eqref{zd2} hold.  We first compare $\al_1$ and $\al_2$. For large $n$, as $\al_1\to 0$, the global bound on $m_1$ from Lemma \ref{lem-momgf} is $1/\al$, so that, with $\eta_n=s/n$,
\[ \frac{\mf_1(\zeta_1,\al_1)}{\tilde{\mf}(\al_1)}
\le \frac1{\al_1}\left(\frac{\eta_n}{d\al_1}\right)^{-1}
\le \frac{d}{\eta_n}\le \frac{d}{\pi_1}, \]
using the rough bound $\pi_1\le \eta_n$. As $d < 2$, by the definition of $\alpha_2$ in \eqref{al2}, this means $\al_2< \al_1$ that is $\zeta_1<\zeta_2$, noting that $\al\to m_1(\zeta_1,\al)/\tilde{m}(\alpha)$ is decreasing at least on $(0,\al_2]$, as product of two positive decreasing maps using Lemma \ref{lem-momgf}. 

To prove  \eqref{zd2}, one compares $\zeta_2$ first to a certain $\zeta_3=\zeta(\al_3)$ defined by $\al_3$ solution of 
\[ \bar\Phi(\zeta(\al_3)-\zeta_1) = \frac8{\pi_1}\al_3\tilde{\mf}(\al_3), \]
which exists by Lemma \ref{lem-lb3}. Using the inequality in this lemma  one gets
\[  \frac{\mf_1(\zeta_1,\al_3)}{\tilde{\mf}(\al_3)} \ge
\frac{\frac14 \bef(\zeta_3)\bar\Phi(\zeta_3-\zeta_1)}{\tilde{\mf}(\al_3)}
= \frac1{4\al_3}\frac{8\al_3 \tilde{\mf}(\al_3)}{\pi_1\tilde{\mf}(\al_3)} =\frac{2}{\pi_1}.  \]
This shows that $\al_3\le \al_2$ that is $\zeta_2\le \zeta_3$. Following \cite{js04}, one distinguishes two cases to further bound $\zeta_3$. 

If $\zeta_3>\zeta_1+1$, using $\zeta_2^q\le \zeta_3^q$,
\begin{align*}
 \pi_1\zeta_2^q & \le \zeta_3^q \frac{8\al_3\tilde{\mf}(\al_3)}{\bar{\Phi}(\zeta_3-\zeta_1)} \leqa \zeta_3^{q+1} \frac{\gf(\zeta_3)}{\bef(\zeta_3)} \frac{\zeta_3-\zeta_1}{\phi(\zeta_3-\zeta_1)}\\
 & \le C \zeta_3^{q+2} \frac{\phi(\zeta_3)}{\phi(\zeta_3-\zeta_1)}=C\zeta_3^{q+2}\phi(\zeta_1)e^{-(\zeta_3-\zeta_1)\zeta_1}\\
 & \le C (\zeta_1+1)^{q+2} e^{-\zeta_1}\phi(\zeta_1),
 \end{align*}
 where for the last inequality we have used that $x\to x^{q+2} e^{-(x-\zeta_1)\zeta_1}$ is decreasing for $x\ge \zeta_1+1$. Lemma \ref{lemz} now implies that 
 $\phi(\zeta_1)\leqa \eta_n$. As $\zeta_1$ goes to $\infty$ with $n/s$, one gets $\pi_1\zeta_2^q\leqa \eta_n \zeta_1^{q}$.

If $\zeta_1\le \zeta_3\le \zeta_1+1$, let $\zeta_4=\zeta(\al_4)$ with $\al_4$  solution in $\al$ of
\[ \bar\Phi(1) = 8\al\tilde{\mf}(\al) \pi_1^{-1}.\]
By the definition of $\zeta_3$, since $\bar\Phi(1) \le \bar\Phi(\zeta_3-\zeta_1)$, we have  $\al_4\le \al_3$. Using the bound on $\tilde m$ from Lemma \ref{lem-momgf} as before, one gets
\[ \bar\Phi(1) \leqa \frac{\zeta_4\gf(\zeta_4)}{\bef(\zeta_4)}\pi_1^{-1} 
\leqa \zeta_4\phi(\zeta_4)\pi_1^{-1}. \]
Taking logarithms this leads to $\zeta_4^2\le C +\log(\zeta_4)+ 2\log(\pi_1^{-1})$ and further to 
\[ \zeta_4^2 \le C' + \log\log(\pi_1^{-1}) + 2\log(\pi_1^{-1}).  \]
In particular, the same upper bound holds for $\zeta_2^2$. The function $x\to x(\log(1/x)+\log\log(1/x))^{q/2}$ is increasing, so one gets, using that $\pi_1\le \eta_n$,
\[ \pi_1\zeta_2^q \le C\eta_n\log^{q/2}(1/\eta_n). \qedhere\]
\end{proof}

\subsubsection{Proof of technical Lemmas} \label{apptech}

\begin{proof}[Proof of Lemma \ref{lemns}]
By the explicit expression of $r_q(\hat\al,0,x)$ and \eqref{eq: UB_kappa_x}, we have
\[  r_q(\hat\al,0,x) \le C a_{\hat\al}(x) [1+|x|^q]. \]
Using \eqref{boundsa} and keeping $a_{\hat\al}(x)$ only if  $\hat \al\le \al$ leads to
\begin{align*}
r_q(\hat\al,0,x)
& \le \left[ \frac{\hat \al}{1-\hat \al} \frac{\gf}{\phi}(x) \wedge 1\right] (1+|x|^q)\1_{\hat\al\le \al} + C(1+|x|^q)\1_{\hat\al >\al} \\
& \le \left[ \frac{ \al}{1- \al} \frac{\gf}{\phi}(x) \wedge 1\right] (1+|x|^q)\1_{\hat\al\le \al} + C(1+|x|^q)\1_{\hat\al >\al}.
\end{align*}
For the first term in the last display, one bounds the indicator from above by $1$ and one recognises the term for fixed $\al$ that we have previously bounded   by $\al\ta(\al)^{q-\delta}/(q-\delta)$. 
The first part of the lemma follows by noting that $E_0[(1+|x|^q)\1_{\hat\al >\al}]$ is bounded from above by 
$(2+2E_0[x^{2q}])^{1/2} P(\hat\al>\al)^{1/2}\le C_1 P(\hat\al>\al)^{1/2}$ by Cauchy-Schwarz inequality. The second part of the lemma follows from the fact that
$r_q(\al,\mu,x)  \le (1-a(x))|\mu|^q + a(x)(|x-\mu|^q+C)\le |\mu|^q +|x-\mu|^q+C$ for any $\al$. 
\end{proof}

\begin{proof}[Proof of Lemma \ref{lemsig}]
Using the expression of $r_q(\al,\mu,x)$ and the bound \eqref{postwb} on $1-a(x)$ gives
 \[ r_q(\hal,\mu,x)  \le |\mu|^q \left[\1_{|x|\le \ta(\hal)} 
 +   e^{-\frac12 (|x|-\ta(\hal))^2}\1_{|x| > \ta(\hal)} \right] + C(1+|x-\mu|^q).
\]
It is enough to bound the first term on the right hand side in the last display, as the last two are bounded by a constant under $E_\mu$. Let us distinguish the two cases $\hal\ge \al$ and $\hal<\al$.  

In the case $\hal\ge \al$, as $\ta(\al)$ is a decreasing function of $\alpha$, 
\begin{align*}
& \lefteqn{\left[\1_{|x|\le \ta(\hal)} 
 +   e^{-\frac12 (|x|-\ta(\hal))^2}\1_{|x| > \ta(\hal)} \right] \1_{\hal\ge \al}}\\
 & \le \left[\1_{|x|\le \ta(\al)} + \1_{\ta(\hal)\le |x|\le \ta(\al)} + 
 e^{-\frac12 (|x|-\ta(\hal))^2}\1_{|x| > \ta(\al)} \right] \1_{\hal\ge \al}\\
 & \le 2 \1_{|x|\le \ta(\al)} + e^{-\frac12 (|x|-\ta(\al))^2} \1_{|x| > \ta(\al)},
\end{align*}
where we have used $e^{-\frac12 v^2}\le 1$ for any $v$ and that $e^{-\frac12 (u-c)^2}\le e^{-\frac12 (u-d)^2}$ if $u>d\ge c$. As a consequence, one can proceed as for the fixed $\al$ bound obtained previously to get
\[ E \left[ r_q(\hal,\mu,x)1_{\hal\ge \al}\right] \le C\left[1 + \ta(\al)^q \right].\]

In the case $\hal < \al$, setting $b_n=\sqrt{d\log n}$ and noting that $\ta(\hal)\le b_n$ with probability $1$ by assumption, proceeding as above, with $b_n$ now replacing $\ta(\al)$, one can bound 
\begin{align*}
& \lefteqn{\1_{|x|\le \ta(\hal)} 
 +   e^{-\frac12 (|x|-\ta(\hal))^2}\1_{|x| > \ta(\hal)}}\\
 & \le 2 \1_{|x|\le b_n} + e^{-\frac12 (|x|-b_n)^2} \1_{|x| > b_n}.
\end{align*}
From this one deduces that 
\begin{align*}
&\lefteqn{ E \left( |\mu|^q\left[\1_{|x|\le \ta(\hal)} 
 +   e^{-\frac12 (|x|-\ta(\hal))^2}\1_{|x| > \ta(\hal)} \right] \1_{\hal<\al}\right) }\\
& \le C \left( E_\mu\left[|\mu|^{2q}\1_{|x|\le b_n} + |\mu|^{2q} e^{-(|x|-b_n)^2}\right] \right)^{1/2} P(\hal <\al)^{1/2}.
\end{align*} 
Using similar bounds as in the fixed $\alpha$ case, one obtains
\[ E_\mu\left[|\mu|^{2q}\1_{|x|\le b_n} + |\mu|^{2q}e^{-(|x|-b_n)^2} \right] \le C(1+b_n^{2q}). \]
Taking the square root and gathering the different bounds obtained concludes the proof.
\end{proof}

\begin{proof}[Proof of Lemma \ref{lem-lb3}] 
First we check the existence of a solution. Set $\zeta_\al=\zeta(\al)$ and $R_\al:= \bar\Phi(\zeta_\al-\zeta_1)/(\al \tmf(\al))$. For $\al\to 0$ we have $\zeta_\al-\zeta_1\to\infty$ so by using $\bar\Phi(u)\asymp \phi(u)/u$ as $u\to\infty$ one gets, treating terms depending on $\zeta_1$ as constants (for fixed $n$ and as $\al\to 0$) and using $\phi(\zeta_\al)\asymp \al \gf(\zeta_\al)$, 
\[ \bar\Phi(\zeta_\al-\zeta_1) \asymp \frac{\phi(\zeta_{\al}-\zeta_1)}{\zeta_\al-\zeta_1} \asymp \frac{\al \gf(\zeta_\al) e^{\zeta_\al\zeta_1}}{\zeta_\al}.\] 
As $\tmf(\al)\asymp \zeta_\al \gf(\zeta_\al)$, one gets $R_\al\asymp e^{\zeta_\al\zeta_1}/\zeta_\al^{\sbl{2}}\to \infty$ as $\al\to 0$. On the other hand, with $\pi_1\le s/n$  and $\al_1\tmf(\al_1)=ds/n$,
\[ R_{\al_1} = \frac{1}{2\al_1\tmf(\al_1)} = \frac{d n}{2s} 
\le \frac{8}{\pi_1} \frac{d}{16}, \]
so that $R_{\al_1}< 8/\pi_1$ as $d<2$. This shows that the equation at stake has at least one solution for $\al$ in the interval $(0,\al_1)$. 

By definition of $\mf_1(\mu,\al)$, for any $\mu$ and $\al$, and $\zeta=\zeta(\al)$,
\begin{align*}
m_1(\mu,\al) & = \int_{-\zeta}^{\zeta} \frac{\bef(x)}{1+\al\bef(x)} \phi(x-\mu)dx\ +\ \int_{|x|>\zeta} \frac{\bef(x)}{1+\al\bef(x)} \phi(x-\mu)dx\\
 & = \qquad \qquad (A) \qquad \qquad \qquad\ +\ \qquad \qquad \qquad  (B).
 \end{align*}
By definition of $\zeta$, the denominator in (B) is bounded from above by $2\al\bef(x)$ so
\[ (B) \ge \frac1{2\al} \int_{|x|>\zeta} \phi(x-\mu)dx \geq \frac12\bef(\zeta)\bar\Phi(\zeta-\mu).\]
One splits the integral (A) in two parts corresponding to $\bef(x)\ge 0$ and $\bef(x)<0$. Let $c$ be the real number such that $\gf/\phi(c)=1$. By construction the part of the integral (A) with $c\le |x|\le \zeta$ is nonnegative, so, for $\al\le |\bef(0)|^{-1}/2$,
\begin{align*}
(A) & \ge \int_{-c}^{c} \frac{\bef(x)}{1+\al\bef(x)} \phi(x-\mu)dx\\
& \ge  -\int_{-c}^c \frac{|\bef(0)|}{1-\al|\bef(0)|} \phi(x-\mu)dx  \\
& \ge -2|\bef(0)| \int_{-c}^c \phi(x-\mu)dx,
\end{align*}
where one uses the monotonicity of $y\to y/(1+\al y)$. For $\mu\ge c$, the  integral $\int_{-c}^c \phi(x-\mu)dx$ is bounded above by $2\int_0^c \phi(x-\mu)dx\le 2c\phi(\mu-c)$. To establish \eqref{m1}, it thus suffices to show that
\[ (i):=4|\bef(0)| c\phi(\zeta_1-c) \le \frac14B(\zeta_3)\bar\Phi(\zeta_3-\zeta_1)=:(ii). \]
The right hand-side equals $2\tmf(\al _3)/\pi_1$ by definition of $\zeta_3$. By Lemma \ref{lem-momgf} and the definition of $\ga$, one has  $\tmf(\al _3)\asymp \zeta_3\gf(\zeta_3)\asymp \zeta_3^{-\delta}$. It is enough to show that $\pi_1^{-1}\zeta_3^{-\delta}$ is larger  than $C\phi(\zeta_1-c)$, for suitably large $C>0$. 

Let us distinguish two cases. In the case $\zeta_3\le 2\zeta_1$, the previous claim is obtained, since $\zeta_1$ goes to infinity with $n/s$ by Lemma \ref{lemz} and $\phi(\zeta_1-c)=o(\zeta_1^{-1})$. 
In the case $\zeta_3> 2\zeta_1$, we obtain an upper bound on $\zeta_3$ by rewriting the equation defining it. For $t\ge 1$, one has $\bar\Phi(t)\ge C\phi(t)/t$. Since $\zeta_3-\zeta_1> \zeta_1$ in the present case, it follows from the equation defining $\zeta_3$ that
\[ C\frac{\phi(\zeta_3-\zeta_1)}{\zeta_3-\zeta_1} \le 8\al_3\tmf(\al_3)/\pi_1. \]
This can be rewritten using $\phi(\zeta_3-\zeta_1)=\sqrt{2\pi}\phi(\zeta_3)\phi(\zeta_1)e^{\zeta_1\zeta_3}$, as well as $\phi(\zeta_3)=\gf(\zeta_3)\al_3/(1+\al_3)\geqa \al_3 \gf(\zeta_3)$ and $\tmf(\al_3)\asymp\zeta_3\gf(\zeta_3)$. This leads to 
\[\frac{e^{\zeta_1\zeta_3}}{\zeta_3^2}\le \frac{C}{\pi_1}e^{\zeta_1^2/2}.\]
By using $e^x/x^2\ge Ce^{x/2}$ for $x\ge 1$ one obtains $\zeta_1^2e^{\zeta_1\zeta_3/2}\le  e^{\zeta_1^2/2}C/\pi_1$, that is, using $\zeta_1^2\ge 1$ and Lemma \ref{lempq},
\[ \zeta_3^\delta  \le C\zeta_1^\delta + C\zeta_1^{-\delta}\log^\delta(C/\pi_1) \]
so that, using that $u\to u\log^{\delta}(1/u)$ is bounded on $(0,1)$, and $\pi_1\le 1 \le \zeta_1$,
\[ \pi_1\zeta_3^\delta \le C(\zeta_1^\delta+1).\] 
 So the previous claim is also obtained in this case, as $\phi(\zeta_1-c)$ is small compared to $(C'\zeta_1)^{-\delta}$ for large $\zeta_1$.

\end{proof}

\subsection{Proof of Lemma \ref{thm: bounds_mmle}} \label{sec: bounds_mmle}

As a first step we prove the upper bound for $\hat\al$. Let us introduce the variables $W_i(\al)=B(X_i,\al)-m_1(\theta_{0,i},\al)$, $A(\al)=-\sum_{i=1}^n m_1(\theta_{0,i},\al)$ and $ V(\al)=\sum_{i=1}^n \text{Var}_{\te_0} W_i(\al)$. Note that $W_i(\al)\leq M(\al)$ for $M(\al)=1/\al$, using Lemma \ref{lem-momgf}. 
We show below that
\begin{align}
A(\tilde{\al}_2)\geq n\tilde{m}({\tilde\al_2})/2;\quad V(\tilde\al_2) \lesssim n \tilde{m}(\tilde\al_2)/\tilde\al_2\label{eq: mmle:help1}
\\
-A(\tilde\al_1)\geq n\tilde{m}({\tilde\al_1});\quad V(\tilde\al_1)\lesssim n\tilde{m}(\tilde\al_1)/\tilde{\al}_1 \label{eq: mmle:help2},
\end{align}
Then since the sign of the score function $\sum_{i=1}^n B(X_i,\alpha)$ determines which side of $\alpha$ the MMLE $\hat\alpha$ lies, Bernstein's inequality leads to
\begin{align*}
P_{\theta_0}(\hat\alpha>\tilde\al_2)&=P_{\theta_0}\big( \sum_{i=1}^n W_i(\tilde\alpha_2) >A(\tilde\al_2)\big)
\leq  \exp\Big\{ \frac{1}{2}\frac{A(\tilde\al_2)^2}{V(\tilde\al_2)+M(\tilde\al_2) A(\tilde\al_2)/3} \Big\}\\
P_{\theta_0}(\hat\al<\tilde\al_1 )&= P_{\theta_0}\Big(-\sum_{i=1}^n W_i(\tilde\alpha_1)  >-A(\tilde\al_1) \Big)\leq \exp\Big\{-\frac{1}{2}\frac{A(\tilde\al_1)^2}{V(\tilde\al_1)-M(\tilde\al_1)A(\tilde\al_1)/3}\Big \},
\end{align*}
which combined with \eqref{eq: mmle:help1}--\eqref{eq: mmle:help2} provides
$P_{\theta_0}(\hat\al>\tilde\al_2 )\leq \exp \{-c n \tilde\al_2\tilde{m}(\tilde\al_2)\}=o(1)$ as well as 
$P_{\theta_0}(\hat\al<\tilde\al_1 ) \leq \exp \{-c n \tilde\al_1\tilde{m}(\tilde\al_1)\}=o(1)$, for some $c>0$.

Let us now prove \eqref{eq: mmle:help1}. Consider the partition of indexes $S_1=\{i:\, |\te_{0,i}|\leq 1/\zeta(\tilde\al_2)\}$, $S_2=\{i:\, 1/\zeta(\tilde\al_2)<|\te_{0,i}|\leq \zeta(\tilde\al_2)/2\}$ and $S_3=(S_1\cup S_2)^c$. By applying twice Lemma \ref{lem: help:adapt:2q} (with $q=2$ and first with $t= 1/\zeta(\tilde\al_2)$ then with $t=\zeta(\tilde{\al}_2)/2$) and using that EB(q) implies EB(2) up to a change in the constants,  the cardinality of the sets $S_2$ and $S_3$ are bounded from above by  $c_0(C_2D_2+1)\tilde{s}\log (n/\tilde{s}) \zeta(\tilde{\al}_2)^2$ and $c_0(4C_2D_2+1)\tilde{s}\log (n/\tilde{s})/ \zeta(\tilde{\al}_2)^2$, respectively, with $C_2,D_2$ defined below \eqref{def: equiv:tilde:s}. In view of  Lemma \ref{lemz} we have $\log (n/\tilde{s}) \sim \zeta(\tilde\al_2)^2$, hence following from the estimates for small and intermediate signals of $m_1(\te_{0,i},\al)$ in Lemma \ref{lem-momgf}, the definition \eqref{def:tilde:al} and the excessive-bias condition with $q=2$,
\begin{align} 
-A(\tilde{\al}_2) &\leq -\sum_{i\in S_1} \tilde{m}({\tilde\al_2})+C \zeta({\tilde\al_2})^{2-\delta} \sum_{i\in S_1}\theta_{0,i}^2+C\sum_{i\in S_2}e^{-\zeta(\tilde\al_2)^2/8}/\tilde\al_2+
 \sum_{i\in S_3}\frac{1}{{\tilde\al_2\wedge c}}\nonumber\\
&\leq -(n-s) \tilde{m}({\tilde\al_2})+ CD_q(\sqrt{2}A)^{2-q} \log (n/\tilde{s}) \zeta({\tilde\al_2})^{2-\delta}  \tilde{s}\nonumber\\
&\qquad+Cc_0(C_2D_2+1) \zeta(\tilde\al_2)^2 \log(n/\tilde{s})\tilde{s} e^{-\zeta(\tilde\al_2)^2/8}/\tilde\al_2
 +c_0(4D_2C_2+1+o(1)) \tilde{s}/{\tilde\al_2}\nonumber\\
&\leq -(1+o(1)) n\tilde{m}({\tilde\al_2})   +d_2c_0(4C_2D_2+1+o(1))n\tilde{m}({\tilde\al_2}) \leq -n\tilde{m}({\tilde\al_2})/2, \nonumber
\end{align}
for small enough choice of the parameter $d_2$ ($d_2<c_0^{-1}(4C_2D_2+1)^{-1}/2$ is sufficiently small), where in the third inequality we used that the second term is a $o( n\tilde{m}(\tilde\al_2) )$, since  it is equal up to a constant multiplier to $(\log^{2-\delta/2}(n/\tilde s) \tilde \al_2)d_2n\tilde{m}(\tilde\al_2)$  and then we substituted $\tilde\al_2=d_2^{-1}\tilde{m}(\al_2)^{-1}\tilde{s}/n=d_2^{-1}\zeta(\tilde\al_2)^{-\delta}(\tilde{s}/n)$. Furthermore, again in view of the bounds on $m_2$ in Lemma \ref{lem-momgf} and \eqref{def:tilde:al},
\begin{align}
 V(\tilde\al_2) & \le \sum_{i\in S_1}  m_2(\theta_{0,i},\tilde\al_2)  
 + \sum_{i\in S_2} m_2(\te_{0,i},\tilde\al_2)
+   \sum_{i\in S_3} m_2(\te_{0,i},\tilde\al_2)\nonumber\\
 & \lesssim  \delta (n-\tilde{s}) \frac{\tilde m(\tilde\al_2)}{\zeta(\tilde\al_2)^2 \tilde\al_2} 
 +\zeta(\tilde\al_2)\log (n/\tilde{s}) \tilde{s} e^{-\zeta(\tilde\al_2)^2/8}/\tilde\al_2^2 + \tilde{s}/\tilde\al_2^2 \nonumber\\
 & \lesssim \frac{1}{\tilde\al_2}\left[ (n-\tilde{s}) \tilde{m}(\tilde\al_2)\zeta(\tilde\al_2)^{-2} +\tilde{s}/\tilde\al_2\right] \lesssim n \tilde{m}(\tilde\al_2)/\tilde\al_2. \nonumber 
\end{align}    
Finally we prove \eqref{eq: mmle:help2}. Note that in view of Lemma \ref{lemz} we have $\zeta(\tilde\al_1)\sim \sqrt{2\log (n/\tilde{s})}$, hence $\zeta(\tilde\al_1)\leq A\sqrt{2\log(n/\tilde{s})}$ (for any $A>1
$ and large enough $n$). Therefore the cardinality of the set $S_4=\{i\in S_0: |\theta_{0,i}|\geq \zeta(\tilde\al_1)\}$  is at least $\tilde{s}$, see Lemma \ref{lem: effective:sparsity}. It follows from Lemma \ref{lem-momgf} that for any $\al$ the map $\mu\to m_1(\mu,\al)$ takes its minimum at $\mu=0$. So  $m_1(\theta_{0,i},\tilde{\al}_1)\ge -\tilde{m}(\tilde{\al}_1)$ 
for every $\theta_{0,i}\in\mathbb{R}$ and $m_1(\theta_{0,i},\tilde{\al}_1)\geq m_{1}(\zeta(\tilde\al_1),\tilde\al_1)$ for every $i\in S_4$. So in view of  Lemma \ref{lem-momgf}, we have $m_1(\theta_{0,i},\tilde{\al}_1)\ge 1/(4\tilde\al_1)$ for large enough $n$, 
\begin{align*}
-A(\tilde\al_1) &\geq -\sum_{i\notin S_4} \tilde{m}({\tilde\al_1})+ \frac{1}{4}\sum_{i\in S_4}\tilde\al_1^{-1} \ge -n \tilde{m}({\tilde\al_1})+(d_1/4)n\tilde{m}({\tilde\al_1})
\ge n\tilde{m}({\tilde\al_1}),
\end{align*}
for large enough choice of the parameter $d_1$ ($d_1>8$ is sufficiently large), and similarly to the bound on $V(\tilde\al_2)$, we have $V(\tilde\al_1)\lesssim n\tilde{m}(\tilde\al_1)/\tilde{\al}_1$.

\subsection{Proof of Theorem \ref{thm: coverage_mmle_q} - diameter bound}\label{sec: coverage_mmle_q-supp}

In view of \eqref{eq: variance}, one can decompose
\begin{align*}
v(\hat\al) & =   \sum_{|X_i|\le t(\hat\al)} a_{\hat\al}(X_i) \omega_q(X_i)
+  \sum_{|X_i|>t(\hat\al)} \big(a_{\hat\al}(X_i)\kappa_{\hat\al,q,i}(X_i) + (1-a_{\hat\al}(X_i))|\hat\te_{\hat\al,i}|^q\big) \nonumber\\
& =: v_1(\hat\al)+v_2(\hat\al).
\end{align*}
We deal with the terms $v_1, v_2$ separately, starting with $v_1(\hat\al)$. Let us partition the set of indexes into $R_1=\{i:\, |\theta_{0,i}|\leq 1/\tau(\tilde\al_2) \}$, $R_2=\{i:\,  1/\tau(\tilde\al_2)< |\theta_{0,i}|\leq \tau(\tilde\al_2)/4 \}$ and $R_3=\{i:\, |\theta_{0,i}|> \tau(\tilde\al_2)/4\}$. Note that $|R_3|\lesssim \tilde{s}$ in view of Lemma \ref{lem: help:adapt:2q} (with $t=\tau(\tilde\al_2)/4$) and $\tau(\tilde\al_2)^2\sim 2\log(n/\tilde{s})$, see Lemma \ref{lemz}.  
Then following \eqref{eq: UB_kappa_x} (for $\mu=0$)
and $t(\tilde\al_1)^2\sim 2\log(n/\tilde{s})$, see Lemma \ref{lemz},
\begin{align*}
\sum_{i\in R_3, |X_i|\leq t(\hat\al)}a_{\hat\al}(X_i)\omega_q(X_i)\lesssim \sum_{i\in R_3, |X_i|\leq t(\tilde\al_1)}(1+|X_i|^q)
\lesssim |R_3|t(\tilde\al_1)^q\lesssim \tilde{s}\log^{q/2}(n/\tilde{s}).
\end{align*}
Next considering the sum defining $v_1(\hat \al)$ restricted to indices that belong to $R_1$,
\begin{align}
\sum_{i\in R_1, |X_i|\leq t(\hat\al)}a_{\hat\al}(X_i)\omega_q(X_i)\leq
 \Big\{\sum_{i\in R_1, |X_i|\leq \tau(\tilde\al_1)}  +\sum_{i\in R_1,\tau(\tilde\al_1)< |X_i|\leq t(\tilde\al_1)} \Big\} a_{\hat\al}(X_i)\omega_q(X_i).\label{eq: hulp03:Lq}
\end{align}
In view of Lemma \ref{thm: bounds_mmle} and assertion \eqref{boundsa}, with probability tending to one, 
\begin{align}
a_{\hat\al}(X_i)\leq 1\wedge \frac{\tilde\al_2}{1-\tilde\al_2}\frac{\gf}{\phi}(X_i). \label{eq: UB:hyper:posterior}
\end{align}
Then by assertion \eqref{eq: UB_kappa_x} used to bound $\omega_q$, Lemma \ref{lem: help:adapt:1q} (with $t=1/\tau(\tilde\al_2)$), Lemma \ref{lem: help:adapt:3q} (with $t_1=1/\tau(\tilde\al_2)$ and $t_2=\tau(\tilde{\al}_1)$), and Lemma \ref{lemz} the following upper bounds, up to constant multipliers, hold for the two terms on the right hand side of \eqref{eq: hulp03:Lq}, for $M_n\to \infty$ slowly,
\begin{align}
\sum_{i\in R_1,  |X_i|\leq\tau(\tilde{\al}_1)}\tilde{\al}_2\frac{\gf}{\phi}(X_i)(1+|X_i|^q)&\leq n\tilde\al_2\tau(\tilde{\al}_1)^{q-\delta}\lesssim \tilde{s} \log^{q/2} (n/\tilde{s}),\nonumber\\
\sum_{i\in R_1,  \tau(\tilde{\al}_1)<|X_i|\leq t(\tilde{\al}_1)}(1+|X_i|^q)&\lesssim n\tau(\tilde\al_1)^{q-1}e^{-\tau(\tilde\al_1)^2/2}+M_n\tau(\tilde\al_1)^{(q+1)/2}(ne^{-\tau(\tilde\al_1)^2/2})^{1/2},\nonumber\\
&\lesssim n\tau(\tilde\al_1)^{q-1}e^{-\tau(\tilde\al_1)^2/2},\label{eq: hulp0004}
\end{align}
with probability tending to one, where the last inequality follows from the argument above \eqref{eq: hulp02q}, with $\tau(\tilde\al_1)$ instead of $t(\tilde\al_2)$, while the first inequality from 
\begin{align*}
\tilde{\al}_2n\tau(\tilde{\al}_2)^{q-\delta}&=d_2\tilde{s} \tau(\tilde{\al}_2)^{q-\delta}/\tilde{m}(\tilde\al_2)\asymp
\tilde{s}\tau(\tilde\al_2)^{q-\delta}\zeta(\tilde\al_2)^\delta\sim \tilde{s} \log^{q/2}(n/\tilde{s}).  
\end{align*}
Finally for $i\in R_2$,  
proceeding as for $i\in R_1$, with probability tending to one
\begin{align}
\sum_{i\in R_2, |X_i|\leq t(\hat\al)}a_{\hat\al}(X_i)\omega_q(X_i)
&\lesssim \tilde\al_2\sum_{i\in R_2, |X_i|\leq \tau(\tilde\al_1)} \frac{\gf}{\phi}(X_i)(|X_i|^q+1)
+\sum_{i\in R_2, \tau(\tilde\al_1)\leq |X_i|\leq t(\tilde\al_1)} (|X_i|^q+1).
\label{eq: UB_var_adapt_term3:Lq}
\end{align}
We deal with the two terms on the right hand side separately, starting with the first one. Note that following from the inequality \eqref{eq: bounds_fun_facts} and the definition of $\tau(\tilde\al_1)$ we have that $\tilde\al_2/(1-\tilde\al_2)\lesssim  \tilde\al_1/(1-\tilde\al_1)=(\phi/\gf)(\tau(\tilde\al_1)).$ Also note that in view of  Lemma \ref{lem: help:adapt:2q} (with $t=1/\tau(\tilde\al_2)$) we have $|R_2|\lesssim \tau(\tilde\al_2)^q \log^{q/2}(n/\tilde{s}) \tilde{s}$. Then in view of Lemma \ref{lem: help:adapt:1q} (with $t=\tau(\tilde\al_2)/4$) and Lemma \ref{lemz} the first term in \eqref{eq: UB_var_adapt_term3:Lq} is bounded from above with overwhelming probability by a constant times 
\begin{align*}
 |R_2|& e^{-\tau(\tilde\al_1)^2/4}\tau(\tilde\al_1)^{q-\delta}\gf^{-1}(\tau(\tilde\al_1) )+  M_n |R_2|^{1/2} e^{-\tau(\tilde\al_1)^2/8}\tau(\tilde\al_1)^{q-\delta-1/2}\gf^{-1}(\tau(\tilde\al_1) )\\
& \lesssim \tilde{s} \log^{q/2}(n/\tilde{s})  e^{-\ta(\tilde\al_1)^2/8}\tau(\tilde\al_1)^{2q+1}=o(\tilde{s} \log^{q/2}(n/\tilde{s})),
\end{align*}
for $M_n\to \infty$ slowly, as $e^{-t^2/8}t^{2q+1}\to 0$ as $t\to\infty$. Next, in view of Lemma \ref{lem: help:adapt:3q} (with $t_1=\tau(\tilde\al_2)/4$ and $t_2=\tau(\tilde\al_1)$) 
the second term on the right hand side of \eqref{eq: UB_var_adapt_term3:Lq} is bounded from above with probability tending to one by a multiple of, for $M_n\to \infty$ slowly,
\begin{align*}
|R_2| e^{-\tau(\tilde\al_1)^2/4}&\tau(\tilde\al_1)^{q-1}+M_n \tau(\tilde\al_1)^{(q+1)/2}(|R_2|e^{-\tau(\tilde\al_1)^2/4})^{1/2}\\
&\lesssim \tilde{s}\log^{q/2} (n/\tilde{s})\tau(\tilde\al_1)^{2q+1}e^{-\tau(\tilde\al_1)^2/8}
=o(\tilde{s} \log^{q/2}(n/\tilde{s})).
\end{align*}
We conclude that with probability tending to one the left hand side of \eqref{eq: UB_var_adapt_term3:Lq} is a 
$o(\tilde{s} \log^{q/2}(n/\tilde{s}))$.

It remained to deal with the term $v_2(\hat\al)$. We apply the partition $R_1,R_2,R_3$ as before. First note that in view of Lemma \ref{lem: help:adapt:3q} (with $t_1=1/t(\tilde\al_2)$ and $t_2= t(\tilde{\al}_2)$), the inequalities $|\hat\theta_{\hat\al,i}|^q\leq |X_i|^q$ and $\kappa_{\hat\al,q,i}(X_i)\lesssim |X_i|^q+|\hat\theta_{\hat\al,i}|^q+1$ we have, for $M_n\to \infty$ slowly enough, 
\begin{align*}
&\sum_{i\in R_1, |X_i|\geq t(\hat\al)}a_{\hat\al}(X_i)\kappa_{\hat\al,q,i}(X_i) + (1-a_{\hat\al}(X_i))|\hat\te_{\hat\al,i}|^q \lesssim \sum_{i\in R_1,  |X_i|\ge  t(\tilde{\al}_2)}(|X_i|^q+1)\\
&\qquad\qquad\lesssim n t(\tilde\al_2)^{q-1}e^{-t(\tilde\al_2)^2/2}+M_n t(\tilde\al_2)^{(q+1)/2} (ne^{-t(\tilde\al_2)^2/2})^{1/2}\lesssim n t(\tilde\al_2)^{q-1}e^{-t(\tilde\al_2)^2/2},
\end{align*}
with probability tending to one, where the last inequality follows from \eqref{eq: hulp02q}. Next we deal with the index set $R_3$, satisfying $|R_3|\lesssim \tilde{s}$. Then note that in view of \eqref{eq: UB_kappa_x} and \eqref{boundedshr}, 
\begin{align*}
\sum_{i\in R_3,t(\hat\al)\leq |X_i|}\hat\al  \kappa_{\hat\al,q,i}(X_i)&\leq \sum_{i\in R_3} [|X_i-\hat\te_{\hat\al,i}|^q+c_3]\lesssim
|R_3|(t(\hat\al)^q+1 )\lesssim \tilde{s} t(\tilde\al_1)^q,
\end{align*}
which is less than a constant times $\tilde{s} \log^{q/2}(n/\tilde{s})$, with probability tending to one, using  Lemma \ref{lemz}.
 Furthermore, using Lemmas \ref{thm: bounds_mmle} and \ref{lemz} again, one gets the same bound for 
\begin{align*} 
\sum_{i\in R_3,t(\hat\al)\leq |X_i|\leq 2 t(\hat\al)}(1-a_{\hat\al}(X_i))|\hat\te_{\hat\al,i}|^q&\leq \sum_{i\in R_3,t(\hat\al)\leq |X_i|\leq 2 t(\hat\al)} |X_i|^q
\lesssim t(\hat\al)^q |R_3|\leq t(\tilde\al_1)^q \tilde{s},
\end{align*}
again with probability tending to one. Besides following from $\phi(x)x^{q+1+\delta}\lesssim \phi(t )t^{q+1+\delta}$ for $x\ge t\ge 1$, assertion \eqref{eq: bounds_fun_facts}, the inequality $1-a_\alpha(X_i)\leq \al^{-1}\phi(X_i)/\gf(X_i)$ and Lemmas \ref{lem-momgf} and \ref{lemzetaall}, we have 
\begin{align*}
\sum_{i\in R_3, |X_i|\geq 2 t(\hat\al)}(1-a_{\hat\al}(X_i))|\hat\te_{\hat\al,i}|^q
\leq \sum_{i\in R_3, |X_i|\geq 2 t(\hat\al)} \frac{\phi(X_i)|X_i|^q}{\hat\al \gf(X_i)} 
\lesssim  \sum_{i\in R_3, |X_i|\geq 2 t(\tilde\al_2)} \frac{\phi(X_i) |X_i|^{q+1+\delta}}{\tilde{\al}_1}\\
\lesssim \tilde{\al}_1^{-1} t(\tilde\al_2)^{q+1+\delta} e^{-2t(\tilde\al_2)^2}|R_3|
\lesssim  (\tilde{\al}_2/\tilde{\al}_1)t(\tilde\al_2)^{q+1+\delta} e^{-3t(\tilde\al_2)^2/2}\tilde{s}=o(\tilde{s}).
\end{align*}
Hence it remained to deal with the set $R_2$. Recall that $|R_2|\lesssim \tau(\tilde\al_2)^{q/2} \log^q(n/\tilde{s}) \tilde{s}$. 
With $|\hat\theta_{\hat\al,i}|^q\leq |X_i|^q$,  $\kappa_{\hat\al,q,i}(X_i)\lesssim |X_i|^q+1$, and $t(\tilde\al_2)\leq t(\hat\al_n)$ by \eqref{eq: UB_kappa_x}, \eqref{boundedshr}, and Lemma \ref{thm: bounds_mmle},
\begin{align*}
\sum_{i\in R_2, |X_i|\geq t(\hat\al)}a_{\hat\al}(X_i)\kappa_{\hat\al,q,i}(X_i) + (1-a_{\hat\al}(X_i))|\hat\te_{\hat\al,i}|^q 
&\lesssim  \sum_{i\in R_2, |X_i|\geq t(\tilde\al_2)} (|X_i|^q+1).
\end{align*}
Then by Lemma \ref{lem: help:adapt:3q} (with $t_1=\tau(\tilde\al_2)/4$ and $t_2=t(\tilde\al_2)$), Lemma \ref{lemz} and the inequality $\tau(\al)\leq t(\al)$ the right hand side of the preceding display is further bounded from above by constant times
\begin{align*}
|R_2| t(\tilde\al_2)^{q-1}e^{-t(\tilde\al_2)^2/4} +M_n t(\tilde\al_2)^{(q+1)/2} (|R_2|e^{-t(\tilde\al_2)^2/4})^{1/2} =o(\tilde{s} \log^{q/2}(n/\tilde{s})),
\end{align*}
 with probability tending to one.
So with probability tending to one $v_2(\hat\al)\lesssim \tilde{s}\log^{q/2} (n/\tilde{s})$, which together with  \eqref{eq: UB:extra term:t} (and the same argument for $\tau(\tilde\al_1)$) results in the upper bound
\begin{align*}
v(\hat\al)\lesssim  \tilde{s}\log^{q/2} (n/\tilde{s})+n t(\tilde\al_2)^{q-1}e^{-t(\tilde\al_2)^2/2}+n\tau(\tilde\al_1)^{q-1}e^{-\tau(\tilde\al_1)^2/2}\lesssim  \tilde{s}\log^{q/2} (n/\tilde{s}).
\end{align*}
Noting that in view of Lemma \ref{lem: effective:sparsity} we have $\tilde{s}\leq \tilde{s}_{q}$, this concludes the proof of Theorem \ref{thm: coverage_mmle_q}.

\subsection{Proof of technical lemmas for credible sets }\label{sec:proofs:technical:credible}

\begin{proof}[Proof of Lemma \ref{lem: effective:sparsity}] 
First of all note that if \eqref{condition: EB_q} is satisfied for some integer $\ell$ then \eqref{condition: EB} is also satisfied for the same integer with the parameters given in the lemma. Therefore it immediately holds that $\tilde{s}\leq \tilde{s}_{q}$. For the other direction let us assume that there are two integers $0<\ell<\ell'\leq s$ satisfying condition \eqref{condition: EB}. Denoting $\bar{s}(\ell):=\big|\{i:\, |\theta_{0,i}|\geq A\sqrt{2\log(n/\ell)}\}\big|$, 
\begin{align}
\ell'/C_q\leq \bar{s}(\ell')&= \bar{s}(\ell)+\big|\big\{i:\, A\sqrt{2\log(n/\ell)}> |\theta_{0,i}|\geq A\sqrt{2\log(n/\ell')}\big\}\big|\nonumber\\
& \leq \bar{s}(\ell)+ \ell \frac{D_q(\sqrt{2}A)^{2-q} \log (n/\ell)}{2A^2\log (n/\ell')}.\label{eq:help:eff:sparsity}
\end{align}
Let us distinguish two cases. For $\ell'\leq 2C_q D_q(\sqrt{2}A)^{2-q}A^{-2} \ell$  we have that $\log(n/\ell')\geq\log(n/\ell)-\log(2C_q D_q(\sqrt{2}A)^{2-q}A^{-2})$, which is larger than $\log(n/\ell)/2$, for every $\ell<s\leq c_1 n$ for sufficiently small $c_1>0$. Therefore the right hand side of \eqref{eq:help:eff:sparsity} is bounded from above by
$$\bar{s}(\ell)+\ell D_q(\sqrt{2}A)^{2-q}/A^2\leq (1+C_qD_q(\sqrt{2}A)^{2-q}/A^2)\bar{s}(\ell),$$
hence $\bar{s}(\ell')\lesssim \bar{s}(\ell)$ follows.
Next consider the case $\ell'>2C_q D_q(\sqrt{2}A)^{2-q} A^{-2} \ell$. Note that the function $f(x)=x\log(n/x)$ is monotone increasing for $x\leq n/e$, hence
\begin{align*}
\ell'&\geq 2C_qD_q(\sqrt{2}A)^{2-q}A^{-2}\ell \frac{\log \big(A^2n/(2C_qD_q(\sqrt{2}A)^{2-q}\ell) \big)}{\log(n/\ell')}\\
&= 2C_qD_q(\sqrt{2}A)^{2-q}A^{-2}\ell \Big(\frac{\log(n/\ell)-\log(2C_qD_q(\sqrt{2}A)^{2-q}A^{-2}) }{\log(n/\ell')}\Big)\\
&\geq C_qD_q(\sqrt{2}A)^{2-q}A^{-2}\ell \frac{\log(n/\ell) }{\log(n/\ell')}.
\end{align*}
Therefore the right hand side of \eqref{eq:help:eff:sparsity} is bounded from above by $$\bar{s}(\ell)+\ell' /(2C_q)\leq \bar{s}(\ell)+\bar{s}(\ell')/2, $$
hence $\bar{s}(\ell')\leq 2\bar{s}(\ell)$, concluding the proof of the first statement.

In view of the inequality $c_0c_1n\geq c_0s\geq c_0\tilde{s}\geq \tilde{s}_{q}\geq \ell /C_q$, for all $\ell$ satisfying \eqref{condition: EB_q}, we have for any $A>A'>1$ that $A\{2\log (n/\ell)\}^{1/2}\geq A'\{2\log(n/\tilde{s})\}^{1/2}$ for sufficiently small $c_1>0$. Then
\begin{align*}
\sum_{|\theta_{0,i}|\leq A'\sqrt{2\log(n/\tilde{s})}} |\theta_{0,i}|^q\leq \sum_{|\theta_{0,i}|\leq A\sqrt{2\log(n/\ell)}} |\theta_{0,i}|^q\leq D_q \ell \log^{q/2}(n/\ell)\leq D_qC_qc_0 \tilde{s}\log^{q/2}(n/\tilde{s}).
\end{align*}
Finally, for the third statement note that
$$\big|\big\{i:\, |\theta_{0,i}| \geq A'\sqrt{2\log(n/\tilde{s})}\big\}\big|\geq \big|\big\{i:\, |\theta_{0,i}| \geq A\sqrt{2\log (n/\ell)}\big\}\big|=\tilde{s}_{q}\geq \tilde{s}. \qedhere $$
\end{proof}

\begin{proof}[Proof of Lemma \ref{lemz}]
From the definition of $\al_1$, setting $\zeta_1=\zeta(\al_1)$, 
one has $\al_1^{-1}\asymp (n/s)\zeta_1 g(\zeta_1)\asymp (n/s)\zeta_1^{-\delta}$ using Lemma \ref{lem-momgf}. Inserting this in the  estimates of Lemma \ref{lemzetaall} gives the first part of the result. The result for the tilda versions follows by using the definitions of $\tilde{\al}_i$ as $d_i n\tilde{m}(\tilde\al_i)\tilde\al_i=\tilde s$ and using the same argument. 

\end{proof}

\begin{proof}[Proof of Lemma \ref{lem-lbv}]
We distinguish two regimes: large and small $x-\mu$. By symmetry one can always assume $x-\mu \ge0$. Set $h_x(u):=\phi(x-u)\gamma(u)$. 
 Suppose $x-\mu\ge 10$ and set $A=5$. By definition, and using that $x-\mu-A\ge (x-\mu)/2$,  
\begin{align*}
\int |u-\mu|^q \ga_x(u) du& \ge \int_{x-A}^{x+A} |u-\mu|^q \frac{h_x(u)}{\gf(x)} du 
  \ge 2^{-q}(x-\mu)^q \int_{x-A}^{x+A} \frac{\ga(u)}{\gf(x)}\phi(x-u)du \\
 &\ge 2^{1-q}(x-\mu)^qA \phi(A)\frac{\ga(x+A)}{\gf(x)}.
\end{align*}
As $(\log \ga)'$ is bounded on $\RR$, one has $\ga(x+A)\ge \ga(x)e^{-cA}$ for some $c>0$. Also,  by Lemma \ref{lem-gf}, $\gf(x)\asymp \ga(x)$, so for any real $x$, we have $\gf(x)\le C\ga(x)$ for some $C>0$. From this and the previous bound, one deduces that $\int |u-\mu|^q \ga_x(u) du \geqa (x-\mu)^q$ for $x-\mu\ge 10$. 

Suppose $0\le x-\mu\le 10$. By writing  $\phi(x-u)/\phi(\mu-u)=
\exp((\mu-x)(\mu+x-2u)/2)$, 
 \begin{align*}
\int |u-\mu|^q \ga_x(u) du& \ge \gf(x)^{-1} \int_{\mu+1}^{\mu+2} |u-\mu|^q \frac{\phi(x-u)}{\phi(\mu-u)}\ga(u)\phi(\mu-u) du \\
 & \ge  e^{-40}  \int_{\mu+1}^{\mu+2} \frac{\ga(u)}{\gf(x)}\phi(\mu-u) du. 
\end{align*}
As noted above, $\gf(x)\asymp \ga(x)$, which implies that $\ga(u)/\gf(x)=(\ga(u)/\ga(x))(\ga(x)/\gf(x))\asymp 1$, since $\ga(u)/\ga(x)\asymp1$ for $0\le x-\mu\le 10$, so the previous display is bounded from below by a constant times $\int_1^2\phi(-u) du$, so that for some constant $d>0$, we have $\int |u-\mu|^q \ga_x(u) du \geqa d$ when  $0\le x-\mu\le 10$. 
Putting the  two previous bounds together gives the result.
\end{proof}

\begin{proof}[Proof of Lemma \ref{lem: help:adapt:1q}]
Let us denote the left  hand side of $\eqref{eq: lem:help1:Lq}$ by $G$. Then \begin{align*}
E_{\theta_0}G&\leq\sum_{i\in R_t} \int_{-\ta(\al)}^{\ta(\al)} \frac{\gf(x)}{\phi(x)}(|x|^q+1)\phi(x-\theta_{0,i})dx\\
&\leq 2|R_t|  \int_{0}^{\ta(\al)} e^{tx} (|x|^q+1) \gf(x)dx\leq 2C_{q,\delta}|R_t| e^{t\tau(\al)} \ta(\al)^{q-\delta},
\end{align*}
using $g(x)\propto (1+|x|)^{-1-\delta}$ with the definition of $\ga$ in \eqref{dens-hv}, Lemma \ref{lem-gf} and $\delta<q$.
  Next the variance of $G$ is bounded by, using again $\delta<q$,
\begin{align*}
\text{Var}_{\theta_0}G&\leq \sum_{i\in R_t} 4\int_{0}^{\ta(\al)} \frac{\gf(x)^2}{\phi(x)^2}(1+x^{2q})\phi(x-\theta_{0,i})dx\\
&\leq 4|R_t| e^{t\tau(\al)} \int_{0}^{\ta(\al)} \phi(x)^{-1}(1+ x^{2q}) \gf(x)^2dx
\leq C_{q,\delta}|R_t| e^{t\tau(\al)+\tau(\al)^2/2} \ta(\al)^{2q-1-2\delta}.
\end{align*} 
Then the statements of the lemma follow by Chebyshev's inequality.
\end{proof}

\begin{proof}[Proof of Lemma \ref{lem: help:adapt:3q}]
First note that in view of Lemma \ref{lempq}
\begin{align*}
\sum_{i\in R_{t_1}} (|X_i|^q+1)\1_{|X_i|>t_2}&\leq 2 \sum_{i\in R_{t_1}} |X_i|^q \1_{|X_i|>t_2}\\
&\leq 2(2^{q-1}\vee1) \sum_{i\in R_{t_1}}(t_1^q+  |\eps_i|^q) \1_{|\eps_i|>t_2-t_1}.
\end{align*}
Then in view of Lemma \ref{lemphi} and the inequality $\int_t^\infty\phi(x)dx\leq \phi(t)/t$, the expected value of the left hand side of the preceding display is bounded from above by a multiple of 
\begin{align*}
E_{\theta_0}\sum_{i\in R_{t_1}} (t_1^q+  |\eps_i|^q)  \1_{|\eps_i|>t_2-t_1}&=2\sum_{i\in R_{t_1}}\int_{t_2-t_1}^{\infty}(t_1^q+x^q) \phi(x)dx\\
&\leq 2(\bar{c}/\sqrt{2\pi}) |R_{t_1}|(t_2-t_1)^{q-1}e^{-(t_2-t_1)^2/2}+t_2^q|R_{t_1}|\int_{t_2-t_1}^{\infty} \phi(x)dx\\
&\leq \frac{4\bar{c}+2}{\sqrt{2\pi}} |R_{t_1}|t_2^{q-1} e^{-(t_2-t_1)^2/2},
\end{align*}
while the variance is bounded by
\begin{align*}
\text{Var}_{\theta_0}\sum_{i\in R_{t_1}}  (t_1^q+|\eps_i|^q) \1_{|\eps_i|>t_2-t_1}&\leq 2 \sum_{i\in R_{t_1}}\int_{t_2-t_1}^{\infty}(t_1^q+x^q)^2 \phi(x)dx\\
&\lesssim |R_{t_1}|t_2^{2q-1} e^{-(t_2-t_1)^2/2}.
\end{align*}
Therefore by Chebyshev's inequality we have with overwhelming probability that
\begin{align*} 
\sum_{i\in R_{t_1}} (|X_i|^q+1)\1_{|X_i|>t_2}&\leq \tilde{c}_0 |R_{t_1}|t_2^{q-1} e^{-(t_2-t_1)^2/2}+ M_n t_2^{q-1/2} (|R|e^{-(t_2-t_1)^2/2})^{1/2},
\end{align*}
with arbitrary $M_n\rightarrow\infty$ and $\tilde{c}_0= (2^{q}\vee2) (4 \bar{c}+2)/\sqrt{2\pi}$ where $\bar{c}=(2-q)\vee 1$. 
\end{proof}

\begin{proof}[Proof of Lemma \ref{lem: help:adapt:2q}]
From the monotone increasing property of $\ell\mapsto \ell\log(n/\ell)$ (for $\ell\leq s=o(n)$) and $\ell\leq C_q\tilde{s}_{q}\leq C_qc_0 \tilde{s}$ (see Lemma \ref{lem: effective:sparsity} with $C_q,c_0\geq 1$) follows
\begin{align*}
|R_t|&\leq t^{-q}\sum_{i\in R_t: |\theta_{0,i}|\leq A\sqrt{2\log (n/\ell)}}|\theta_{0,i}|^q + |\{i:\,|\theta_{0,i}|\geq A\sqrt{2\log (n/\ell)} \}|\\
& \leq D_qC_q c_0  \tilde{s} t^{-q}\log^{q/2} (n/\{C_q c_0 \tilde{s}\})+\tilde{s}_{q}\leq c_0(C_qD_qt^{-q}\log^{q/2} (n/\{C_q c_0 \tilde{s}\})+1)\tilde{s}\\
&\leq D_qC_q c_0  \tilde{s} t^{-q}\log^{q/2} (n/\tilde{s})+\tilde{s}_{q}\leq c_0(C_qD_qt^{-q}\log^{q/2} (n/\tilde{s})+1)\tilde{s}. \qquad \qedhere
\end{align*}
\end{proof}

\subsection{Proof of Remarks \ref{rem: cred:alternative} and \ref{rem: equiv:adapt}}\label{sec: cred:alternative}

First we deal with assertion \eqref{eq: equiv:nonadapt}. We follow below similar lines of reasoning as in the proof of Lemma 4.1 of \cite{vsvuq17}. 

Let us introduce the shorthand notation $W=d_q(\theta,\hat\theta_\alpha)$ and $sd(W\given X,\al)$ for the standard deviation of a variable of distribution $\cL(W\given X,\al)$. Note that $v_{q,\alpha}(X)=E(W|X,\alpha)$. Then in view of Chebyshev's inequality
\begin{align*}
&\Pi_\alpha \big(\theta:\, W>  E(W|X,\alpha)+ \beta^{-1/2} sd(W|X,\alpha) |X\big)<\beta,
\end{align*}
which implies $r_\beta\leq E(W|X,\alpha)+ \beta^{-1/2} sd(W|X,\alpha)$. Similary, by applying again Chebyshev's inequality, we get that
\begin{align*}
\Pi_\alpha& \big(\theta:\, W<  E(W|X,\alpha)- (1-\beta)^{-1/2} sd(W|X,\alpha) |X\big)= \\
&\Pi_\alpha \big(\theta:\, -(W- E(W|X,\alpha) )>  (1-\beta)^{-1/2} sd(W|X,\alpha) |X\big)< 1-\beta,
\end{align*}
resulting in the lower bound $E(W|X,\alpha)- (1-\beta)^{-1/2} sd(W|X,\alpha) \leq r_{\beta}$.

Hence it is sufficient to show that 
\begin{align}
sd(W|X,\alpha)=o\big(E(W|X,\alpha)\big),\label{eq: std:dev:UB}
\end{align}
for $(\log_2 n)^{\delta/2}/n\ll \alpha\leq \alpha_1$, for some sufficiently small $\alpha_1>0$. Recalling that for independent random variables the variance of the sum is equal to the sum of the variances we get that
\begin{align*}
var(W|X,\al)\leq \sum_{i=1}^n \int |\theta_i-\hat\theta_{\alpha,i}|^{2q}d\Pi(\theta_i|X_i)=v_{2q,\al}.
\end{align*}
Then, similarly to \eqref{eq: variance},
\begin{align*}
v_{2q,\al}= &  \sum_{i\in S_0: |X_i|\le t(\al)} a(X_i) \omega_{2q}(X_i)
+  \sum_{i\in S_0: |X_i|>t(\al)} \left[a(X_i)\kappa_{\al,2q,i}(X_i) + (1-a(X_i))|\hat\te_{\al,i}|^{2q}\right]\nonumber\\
 & + \sum_{i\notin S_0: |\veps_i|\le t(\al)} a(\veps_i) \omega_{2q}(\veps_i)
+  \sum_{i\notin S_0: |\veps_i|>t(\al)} \left[ a(\veps_i)\kappa_{\al,2q,i}(\veps_i) + (1-a(\veps_i))
|\hat\te_{\al,i}|^{2q}\right] \nonumber\\
& =: u_1+u_2+u_3+u_4.
\end{align*}
Since in view of Lemma \ref{lem: momga2} the upper bounds for $v_3$ and $v_4$ in Step 4 of Section \ref{sec: coverage:nonadapt} go through with $q\in(0,4]$, we get from \eqref{eq:coverage:LB:var0} that
$$u_3+u_4\lesssim \alpha n \tau(\alpha)^{2q-\delta}\lesssim \tau(\al)^q v_{q,\alpha}.$$
Next, following from Lemma \ref{lem: momga2}, Lemma \ref{lem-lbv}, and the inequality $(1+|x|^{2q})\leq (1+|x|^q)^2$,
\begin{align*}
u_1&\leq  \sum_{i\in S_0: |X_i|\le t(\al)} a(X_i) \omega_{2q}(X_i)\lesssim \sum_{i\in S_0: |X_i|\le t(\al)} a(X_i) (1+|X_i|^{2q})\\
&\lesssim \sum_{i\in S_0: |X_i|\le t(\al)} a(X_i) \omega_{q}(X_i) (1+|X_i|^{q})\lesssim v_1 t(\al)^q< v_{q,\al}t(\al)^q.
\end{align*}
Finally, it remained to deal with the term $u_2$. First note that in view of Lemma \ref{lem: momga2} (with $\mu=X_i, x=X_i$),  Lemma \ref{lem-lbv}, and assertion \eqref{boundedshr} (which implies $|X_i-\hat\te_{\al,i}|\le t(\al)$), $\kappa_{\al,2q,i}(X_i) \lesssim |\hat\te_{\al,i}-X_i|^{2q}+1\lesssim (|\hat\te_{\al,i}-X_i|^{q}+1)^2\lesssim \kappa_{\al,q,i}(X_i)t(\alpha)^{q}$, which together with $|\hat\te_{\al,i}|\leq |X_i|$ implies that
\begin{align*}
& \sum_{i\in S_0:t(\al)\leq |X_i|\leq 2t(\al)} \left[a(X_i)\kappa_{\al,2q,i}(X_i) + (1-a(X_i))|\hat\te_{\al,i}|^{2q}\right]\\
&\qquad\qquad\lesssim  \sum_{i\in S_0:t(\al)\leq |X_i|\leq 2t(\al)} \left[a(X_i)\kappa_{\al,q,i}(X_i)t(\alpha)^{q} + (1-a(X_i))|\hat\te_{\al,i}|^{q}|X_i|^q\right]\\
&\qquad\qquad \leq v_2 t(\al)^q< v_{q,\al}t(\al)^q.
\end{align*}
Furthermore, in view of the definition of $a(X_i)$, for $0<\alpha\leq \alpha_1$ (with some sufficiently small  $\alpha_1>0$) and $|X_i|\geq 2 t(\al)$,
\begin{align*}
(1-a(X_i))|X_i|^{2q}\leq \frac{\phi(X_i)}{\alpha g(X_i)}|X_i|^{2q}\lesssim \alpha^3 t(\alpha)^{2q+1+\delta}.
\end{align*}
Then similarly as above
\begin{align*}
& \sum_{i\in S_0: |X_i|>2t(\al)} \left[a(X_i)\kappa_{\al,2q,i}(X_i) + (1-a(X_i))|\hat\te_{\al,i}|^{2q}\right]\\
&\qquad\qquad\lesssim  \sum_{i\in S_0:|X_i|>2t(\al)}\big(a(X_i)\kappa_{\al,q,i}(X_i)t(\alpha)^{q} + \alpha^3 t(\alpha)^{2q+1+\delta} \big)\\
&\qquad\qquad \lesssim v_2t(\al)^q +o(n\alpha)\leq v_{q,\alpha}t(\al)^q.
\end{align*}
Combining the above inequalities and noting that $v_{q,\alpha}\gtrsim n\alpha \tau(\al)^{q-\delta}\gg t(\al)^q$, for $\al\gg (\log_2 n)^{\delta/2}/n$ we get that 
$$v_{2q,\alpha}\lesssim t(\alpha)^q v_{q,\alpha}=o(v_{q,\alpha}^2),$$
finishing the proof of assertion \eqref{eq: equiv:nonadapt}.

Next we prove Remark \ref{rem: equiv:adapt}. Note that $\tilde{\al}_1=d_1 \tilde{s}/(n \tilde{m}(\tilde{\al}_1))\asymp \tilde{s}\zeta(\tilde\al_1)^\delta/n\gg (\log n)^{\delta/2}/n$ as $\tilde{s}\rightarrow\infty$ under the excessive bias assumption, see \eqref{condition: EB_q}. Hence in view of assertion \eqref{assump: bounds} combined with $\tilde{\al}_2=o(1)$ (see \eqref{eq: bounds_fun_facts}) 
we get that $(\log n)^{\delta/2}/n\ll \hat{\al}=o(1)$ with probability tending to one. The proof concludes by noting that the two types of credible sets are equivalent for $(\log_2 n)^{\delta/2}/n\ll \alpha\leq \alpha_1$, as proven above.

\begin{lem}\label{lem: momga2}
For any $q\in(0,2]$ and $\mu,x\in\mathbb{R}$
\begin{align*}
\int |\mu-u|^{2q}\gamma_x(u)du\leq C\big[ |x-\mu|^{2q}+1\big].
\end{align*}
\end{lem}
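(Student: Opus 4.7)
The plan is to extend the argument for \eqref{eq: UB_kappa_x} in Lemma \ref{lemmomga} to moments of order up to four, which is possible since $2q\le 4$ for $q\in(0,2]$. First, by H\"older's inequality with exponents $(2/q, 2/(2-q))$ applied against the probability measure $\gamma_x(u)du$,
\[
\int |u-\mu|^{2q}\gamma_x(u)\,du \le \Big(\int (u-\mu)^4\gamma_x(u)\,du\Big)^{q/2},
\]
so it will suffice to establish $\int(u-\mu)^4\gamma_x(u)\,du \le C[(x-\mu)^4+1]$. Combining this with the subadditivity $(a+b)^{q/2}\le a^{q/2}+b^{q/2}$, valid for $q/2\in(0,1]$, will then yield the claimed bound.

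For the fourth moment, I would expand $(u-\mu)^4 = \sum_{k=0}^4\binom{4}{k}(u-x)^k(x-\mu)^{4-k}$ and reduce the problem to controlling the centred moments $M_k(x):=\int(u-x)^k\gamma_x(u)\,du$ for $k=0,1,\dots,4$. Once a uniform bound $|M_k(x)|\le C$ is in hand, it will follow that
\[ \int(u-\mu)^4\gamma_x(u)\,du \le C\sum_{k=0}^4\binom{4}{k}|x-\mu|^{4-k}\le C'[(x-\mu)^4+1], \]
using $|x-\mu|^a\le 1+|x-\mu|^4$ for $a\in[0,4]$.

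The main step, and the main obstacle, will be to prove $|M_k(x)|\le C$ uniformly for $k\le 4$. After the substitution $v=u-x$ one has $M_k(x) = g(x)^{-1}\int v^k \gamma(x+v)\phi(v)\,dv$, and I would analyse the two regimes $|x|$ bounded and $|x|$ large separately, rather than try to extend the $M_2$-recursion used in Lemma \ref{lemmomga} through $g''/g$: that route becomes delicate at $k\ge 3$ because $\gamma=c_\gamma(1+|u|)^{-1-\delta}$ is only $C^0$ at $0$, so odd derivatives of $\gamma$ jump there and naive integration by parts would produce Dirac boundary contributions. For $|x|$ bounded, $g(x)$ is bounded below by a positive constant by continuity and strict positivity, while $\int|v|^k\gamma(x+v)\phi(v)\,dv\le \gamma(0)\int|v|^k\phi(v)\,dv<\infty$ since $\gamma$ is bounded and $\phi$ has all moments, giving $|M_k|=O(1)$. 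For $|x|$ large, Lemma \ref{lem-gf} gives $g(x)\asymp\gamma(x)\asymp|x|^{-1-\delta}$, and I would split the integral at $|v|=|x|/2$: on $\{|v|\le|x|/2\}$ the log-Lipschitz property $|(\log\gamma)'|\le\Lambda$ yields $\gamma(x+v)\le C\gamma(x)\asymp Cg(x)$, so this part contributes $O(g(x))\int|v|^k\phi(v)\,dv=O(g(x))$; on $\{|v|>|x|/2\}$ the Gaussian factor $\phi(v)\le\phi(|x|/2)$ decays faster than any polynomial in $|x|$, making this part $o(g(x))$. Dividing through by $g(x)$ yields $M_k(x)=O(1)$ uniformly.
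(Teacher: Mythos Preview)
Your approach is correct and is a genuinely different route from the paper's. The paper extends the computation from Lemma~\ref{lemmomga} literally: it writes the centred moments $\int(u-x)^k\gamma_x(u)\,du$ for $k=1,\dots,4$ as rational expressions in $g^{(j)}(x)/g(x)$ via Gaussian integration by parts, then asserts $|g^{(k)}|\le c_k g$ for $k\le 4$ ``by the definition of $\gamma$'', expands $(u-\mu)^4$ binomially, and finishes by H\"older exactly as you do. Your concern about the non-smoothness of $\gamma$ at the origin is legitimate but does not break their argument: the integration-by-parts identities involve only derivatives of the smooth convolution $g=\phi*\gamma$, and the extra boundary terms one picks up when transferring derivatives back to $\gamma$ are multiples of $\phi^{(j)}(x)$, which are all $O(g(x))$. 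Still, your direct estimate of $M_k(x)=g(x)^{-1}\int v^k\gamma(x+v)\phi(v)\,dv$ sidesteps this bookkeeping entirely and is arguably cleaner.

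One small gap in your write-up: on the region $\{|v|\le |x|/2\}$ you invoke the log-Lipschitz bound $|(\log\gamma)'|\le\Lambda$ to get $\gamma(x+v)\le C\gamma(x)$, but that property only yields $\gamma(x+v)\le\gamma(x)e^{\Lambda|v|}$, which is not uniformly bounded for $|v|$ up to $|x|/2$. The bound you want follows instead from the explicit form of $\gamma$: for $|v|\le|x|/2$ one has $|x+v|\ge|x|/2$, hence by monotonicity $\gamma(x+v)\le\gamma(|x|/2)\le 2^{1+\delta}\gamma(x)$. With this fix your argument goes through as written.
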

\begin{proof}
The proof follows the same line of reasoning as Lemma \ref{lemmomga}, nevertheless for completeness we provide the details below.
 Recall $\ga_x(u) = \ga(u)\phi(x-u)/\gf(x)$ and by elementaty computations we get that
\begin{align*}
\int (u-x) \ga_x(u) du&=g'(x)/g(x),\\
\int (u-x)^2 \ga_x(u) du&=\gf^{(2)}(x)/\gf(x)+1,\\
\int (u-x)^3 \ga_x(u) du&=-\gf^{(3)}(x)/\gf(x)+3g'(x)/g(x),\\
\int (u-x)^4 \ga_x(u) du&=\gf^{(4)}(x)/\gf(x)+6g^{(2)}(x)/g(x)-3.
\end{align*}
Then by using the equation $u^4=(u-x)^4+4x(u-x)^3+6x^2(u-x)^2+4x^3(u-x)+x^4$ we get that
\begin{align*}
\int u^4\gamma_x(u)du&=\frac{g^{(4)}}{g}(x)-6\frac{g^{(2)}}{g}(x)+3+4x\Big(-\frac{g^{(3)}}{g}(x)+3 \frac{g'}{g}(x)\Big)\\
&\qquad\qquad+ 6x^2\Big(\frac{g^{(2)}}{g}(x)+1 \Big)+4x^3\frac{g'}{g}(x)+x^4.
\end{align*}
Noting that by the definition of $\gamma(u)$, see \eqref{dens-hv}, $|g'|\leq c_1 g$, $|g^{(2)}|\leq c_2 g$, $|g^{(3)}|\leq c_3 g$, $|g^{(4)}|\leq c_4 g$, for somce $c_1,c_2,c_3,c_4>0$ we get by similar computations as above that there exist positive constants $C_1,C_2,C_3,C_4,C_5,C_6$ such that
\begin{align*}
\int(u-\mu)^4\gamma_x(u)du&\leq (x-\mu)^4+C_1 |x-\mu|^3+C_2 (x-\mu)^2+C_3 |x-\mu|+C_4\\
&\leq C_5(x-\mu)^4+C_6.
\end{align*} 
Using H\"older's inequality and manipulating the $q$th powers via \eqref{manipq} leads to
\begin{equation*}
\int |u-\mu|^{2q} \ga_x(u)du \le \Big[\int (u-\mu)^4 \ga_x(u)du\Big]^{q/2}\le 
C\left[ |x-\mu|^{2q} + 1\right].
\end{equation*}
\end{proof}

\subsection{Notes on posterior median and mean} \label{supp-pointest}

{\em Minimaxity of the posterior median, $0<q\le 2$.} Although we do not use it in the proof of the main results of the paper (only in the discussion previous to Theorems \ref{thm: coverage_mmle} and \ref{thm: coverage_mmle_q} to mention global minimaxity for the diameter of the considered credible sets), one can check that the posterior median is rate-minimax under the assumptions of Theorem \ref{thm-risk-dq}. We now briefly sketch the argument. For sub-Cauchy tails of the slab density (including Cauchy tails and any density of the form \eqref{dens-hv} with $\delta\ge 1$), this is a direct consequence of Theorem 1 in \cite{js04}, as the conditions of that statement are satisfied for these slabs. For heavier slab tails, one can still follow the steps in the proof of Theorem 1 in \cite{js04}. The only changes concern the slightly different moment estimates for $\tilde m$ and $m_1$ (which in turn give a slightly different behaviour of $\hat\al$, depending on the slab tails). The risk bounds from Section 6 of \cite{js04} do not depend on slab tails and thus remain unchanged. As the corresponding risk bounds are always equal or below the risk bounds for the full posterior in Lemma \ref{lemmomga}, they eventually lead to the posterior median converging at minimax rate, in a similar way as for the proof of Theorem \ref{thm-risk-dq} for the full posterior. \\

{\em Suboptimality of the posterior mean for $q<1$.} Johnstone and Silverman \cite{js04}, Section 10, showed that the empirical Bayes posterior mean has a suboptimal convergence rate when $q<1$ and when $\ga$ is the Laplace slab. 
We next show that their argument extends to slabs $\ga$ as in \eqref{dens-hv}.

The argument in \cite{js04}, Section 10, only depends on the behaviour of $g(\sqrt{2\log{n}})$ and the fact that the function $\tilde\mu_1$ they introduce on p. 1623 is strictly increasing, verifies  $\tilde\mu_1(x)\ge x-\Lambda$ as well as $\tilde\mu_1'(0)>0$. The monotonicity of $\tilde\mu_1$ and the first inequality directly follow from the fact that $(\log g)'=g'/g$ is bounded on $\RR$ (this is true for the slabs we consider, see the proof of Lemma 5), while 
\[ \tilde\mu_1'(0)=\frac{\int u^2 \phi(x-u)\ga(u)du}{\int  \phi(x-u)\ga(u)du}>0. \]
So, reproducing their argument with the slab $\ga$ being one of those considered in the present paper, one gets, for $\te_0=0$, and any $0< q\le 2$, as on the first equation of p. 1648 of \cite{js04}, if $\bar\te(\hat\al)$ is the posterior mean at stake corresponding to a slab $\ga$,
\[ Ed_q(\bar\te(\hat\al),\te)\ge \sum_{i=1}^n Cn^{-q}g(\sqrt{2\log{n}})^{-q}=
Cn^{1-q}g(\sqrt{2\log{n}})^{-q}. \]
From this we conclude that for any $q<1$ the $d_q$-risk is at least $n^d$ with $d=1-q>0$ so is above the minimax risk as soon as $s=o(n^d)$. \\

{\em Posterior mean, the case $q=1$.} 
In \cite{js04}, it is shown that when $q=1$, the Laplace slab is suboptimal by using the argument above, as $g(\sqrt{2\log{n}})\asymp\exp(\sqrt{2\log{n}})$ gives a suboptimal rate for very small sparsities $s$, as the minimax rate is $s\log^{1/2}(n/s)$ when $q=1$. In fact, the posterior mean for $q=1$ is also suboptimal for any slab density as in \eqref{dens-hv} and very small sparsities $s$ (e.g $s=\log\log{n}$), by the same argument, see the paragraphs above. As a byproduct, this shows in passing that some slight suboptimality (by $\log{n}$ factors) {\em must} occur for convergence of the full posterior as in Theorem \ref{thm-risk-dq} if $q=1$ and $s$ is of smaller order than, say, $\log{n}$ (which is excluded by condition \eqref{techsn}). Indeed, by the convexity argument given below the statement of Theorem \ref{thm-risk-dq}, if the full posterior was minimax in this regime, it would automatically imply the same rate on the posterior mean. To handle very small $s$, i.e. $s=o(\log^2{n})$ without loss of a logarithmic factor, one could use the modification proposed by Johnstone and Silverman \cite{js04} in their Section 4 (Theorem 2 in \cite{js04} and surrounding paragraphs) to handle the very sparse case. 

Remarkably,  although the posterior mean is suboptimal if $q< 1$, the full posterior distribution itself does converge at optimal minimax rate for any $q\in(2\delta,2]$ (and $\delta$ can be taken as small as desired provided the slab tails are heavy enough) as follows from Theorem \ref{thm-risk-dq} of the paper. This phenomenon was first observed for the hierarchical spike-and-slab by \cite{cv12}, see the paragraphs below Theorem 2.2 and Remark 2.3 in that paper.

\bibliographystyle{apalike}
\bibliography{spacs}

\end{document}